\documentclass[11pt,oneside,reqno,a4paper]{amsart}

\usepackage[margin=1.00in]{geometry}

\allowdisplaybreaks[3]

\usepackage{lmodern}

\usepackage{amsmath}
\usepackage{amsthm}
\usepackage{amssymb}
\usepackage{verbatim}
\usepackage{esint}
\usepackage{pdfpages}
\usepackage{tikz}
\usepackage{xcolor}

\usepackage[colorlinks]{hyperref}
\definecolor{myblue}{rgb}{0.0, 0.0, 1.0}
\definecolor{mygreen}{rgb}{0.2,0.8,0.2}
\hypersetup{
	colorlinks=true,
	linkcolor=myblue,         
	citecolor=mygreen,
	urlcolor=mygreen,
	linktocpage=true
}

\renewcommand{\Tilde}{\widetilde}
\renewcommand{\Hat}{\widehat}

\newcommand{\dd}{\mathrm{d}}
\newcommand{\cD}{\mathcal{D}}

\newcommand{\cV}{\mathcal{V}}
\newcommand{\N}{\mathbb{N}}
\newcommand{\R}{\mathbb{R}}
\renewcommand{\H}{\mathcal{H}}
\newcommand{\eps}{\varepsilon}
\newcommand{\specess}{\mathrm{spec}_{\mathrm{ess}}}
\newcommand{\spec}{\mathrm{spec}}

\newtheorem{theorem}{Theorem}
\newtheorem{lemma}[theorem]{Lemma}

\newtheorem{corollary}[theorem]{Corollary}
\theoremstyle{definition}
\newtheorem{definition}[theorem]{Definition}
\newtheorem{remark}[theorem]{Remark}

\title[]{Laplacian eigenvalues for large negative Robin parameters on domains with outward peaks}

\author[]{Konstantin Pankrashkin}
\address{Konstantin Pankrashkin: Carl von Ossietzky Universit\"{a}t Oldenburg,
	Fakult\"{a}t V, Institut f\"{u}r Mathematik,
	Ammerl\"ander Heerstra{\ss}e 114--118,
	26129 Oldenburg, Germany\\ 
	ORCID: \url{https://orcid.org/0000-0003-1700-7295}}
\email{konstantin.pankrashkin@uol.de}

\author[]{Firoj Sk}
\address{Firoj Sk: Carl von Ossietzky Universit\"{a}t Oldenburg,
	Fakult\"{a}t V, Institut f\"{u}r Mathematik,
	Ammerl\"ander Heerstra{\ss}e 114--118,
	26129 Oldenburg, Germany\\
	ORCID: \url{https://orcid.org/0000-0002-2653-5674}}
\email{firoj.sk@uol.de}	

\author[]{Marco Vogel}
\address{Marco Vogel: Technische Universit\"at Dortmund, Fakult\"at für Mathematik, Vogelpothsweg 87, 	44227 Dortmund, Germany}
\email{marco.vogel@math.tu-dortmund.de}


\subjclass{35P15; 35P20; 47A75; 49R05; 58C40}

\keywords{Laplacian, Robin boundary condition, Negative eigenvalues, Asymptotic expansion, Domains with peaks}

\begin{document}

\begin{abstract}
	We study the asymptotic behavior of individual eigenvalues of the Laplacian in domains with outward peaks for large negative Robin parameters. A large class of cross-sections is allowed, and the resulting asymptotic expansions reflect both the sharpness of the peak and the geometric shape of its cross-section. The results are an extension of previous works dealing with peaks whose cross-sections are balls.
\end{abstract}

	\maketitle

	\renewcommand{\proofname}{\bf Proof}

\section{Introduction}

For $\Omega\subset\R^d$ a bounded domain (non-empty connected open set) with suitably regular boundary,
we are interested in the following Robin eigenvalue problem:
\begin{equation}
	\label{robin}
\left\{
	\begin{aligned}
		-\Delta u&=\lambda u \text{ in }\Omega,\\
		\partial_\nu u&=\alpha u \text{ on }\partial\Omega,
	\end{aligned}	
\right.
\end{equation}
where $\alpha>0$ is a parameter and $\partial_\nu$ means the outer normal derivative. The problem \eqref{robin} is understood in a weak sense. For a strict formulation we denote by $\H^m$ the $m$-dimensional Hausdorff measure, and for the sake of readability we denote a function in $\Omega$ and its Sobolev trace on $\partial \Omega$ by the same symbol.
Under appropriate regularity assumptions on $\Omega$ the symmetric bilinear form $r^\Omega_\alpha$
defined on the domain $\cD(r^\Omega_\alpha)=W^{1,2}(\Omega)$ by
\[
r^\Omega_\alpha(u,v):=\int_\Omega \langle\nabla u,\nabla v\rangle_{\R^d}\,\dd\H^d -\alpha\int_{\partial \Omega}uv\,\dd \H^{d-1}
\]
is closed and generates a self-adjoint operator
$R^\Omega_\alpha$ with compact resolvent in $L^2(\Omega)$. Informally, the operator $R^\Omega_\alpha$ corresponds to the Laplacian $u\mapsto -\Delta u$ acting on the functions $u$ satisfying the boundary condition $\partial_\nu u=\alpha u$ on $\partial\Omega$, and the eigenvalues $\lambda$ in \eqref{robin} are understood as the eigenvalues of $R^\Omega_\alpha$. In particular, a number $\lambda$ is an eigenvalue of \eqref{robin} with an eigenfunction $u$ if and only if
\[
r^\Omega_\alpha(u,v)=\lambda \int_\Omega uv\,\dd\H^d \text{ for all } v\in W^{1,2}(\Omega).
\]
Alternatively, one can consider $u\mapsto r^\Omega_\alpha(u,u)$ as the energy functional for \eqref{robin} and characterize the eigenvalues variationally 
using the min-max principle. Remark that the boundary term in the above expression for $r^\Omega_\alpha(u,u)$ is negative for $\alpha>0$, so this case is usually termed as the case of \emph{negative} Robin parameters. 

In what follows, for a lower semibounded self-adjoint operator $A$ we denote by $\lambda_j(A)$ its $j$-th eigenvalue (if it exists), assuming that the eigenvalues are enumerated in the non-decreasing order  by taking into account their multiplicities. The goal of the present paper is to study the asymptotic behavior of the individual eigenvalues of $R^\Omega_\alpha$, i.e. of $\lambda_j(R^\Omega_\alpha)$ with fixed $j$, for $\alpha\to+\infty$ and a special class of domains $\Omega$. The dependence of $\lambda_j(R_\alpha^\Omega)$ on $\alpha$ for various classes of $\Omega$ has been given a considerable attention during the last decade, see the reviews in \cite{DFK,kobp}. While the above problem is linear, it originally appeared
in the study of non-linear problems, see e.g. \cite{DC} for a stochastic interpretation,
\cite{GS} for a discussion of an interplay between the Robin eigenvalues and critical temperature estimates in surface
superconductivity, and \cite{LOS} for a link with the study of reaction-diffusion equations.

 If $\Omega$ is a bounded Lipschitz domain, then all required regularity assumptions are satisfied, and it is known that for large $\alpha$
one has a two-sided bound
\begin{equation}
	\label{lipschitz}
-c\alpha^2\le \lambda_1(R_\alpha^\Omega)\le-\alpha^2
\end{equation}
with some $c\ge 1$, see e.g.~\cite[Prop.~4.12]{DFK} and~\cite[Lem.~2.7]{kobp},
and
the very recent paper \cite{dp} shows that no asymptotics of the form $\lambda_1(R_\alpha^\Omega)\sim -c_\Omega \alpha^2$ with $c_\Omega>0$ is available in general. Under stronger regularity assumptions
one can construct detailed asymptotic expansions for $\lambda_j(R^\Omega_\alpha)$ with any fixed $j$ involving various geometric properties of $\Omega$ and $\partial\Omega$, see e.g. \cite{antunes,bp,dk,HK,kr,LP,pp}
and the reviews in \cite{DFK,kobp}. On the other hand,
it was observed in \cite{LP} that \eqref{lipschitz} fails for domains $\Omega$ with outward peaks, which was later studied in greater detail in \cite{kov}. As the subsequent text is specifically devoted to the study of $\lambda_j(R^\Omega_\alpha)$ for such $\Omega$, let us introduce an adapted language in order to continue the discussion.

From now on let $d\in\N$ with $d\ge 2$. The vectors $x\in\R^d$ will be written in the form $x=(x_1,x')$ with  $x'\in\R^{d-1}$. The following definition is in the spirit of \cite[Sec.~5.1.1]{mazya}, see Figure~\ref{fig1} for an illustration:

\begin{definition}
	 \label{defin1}
Let $q>1$ and $\omega\subset\R^{d-1}$ be a bounded Lipschitz domain.	 
We say that a bounded domain $\Omega\subset\R^d$ has an outward peak at the origin, of sharpness order $q$
with cross-section $\omega$, if for some $\delta>0$ it holds
\begin{equation*}
	\label{eqpeak}
\Omega\cap (-\delta,\delta)^d=\Big\{x\in\R^{d}:\ x_1\in(0,\delta),\ x'\in x_1^q\omega\Big\}
\end{equation*}
and $\Omega$ is Lipschitz at all boundary points except at the origin.
\end{definition}

\begin{figure}
	\centering

	\tikzset{every picture/.style={line width=0.75pt}} 
	
	\scalebox{0.8}{\begin{tikzpicture}[x=0.75pt,y=0.75pt,yscale=-1,xscale=1]
			
			\draw  [fill={rgb, 255:red, 155; green, 155; blue, 155 }  ,fill opacity=1 ][line width=1.5]  (136.5,164) .. controls (222.5,164) and (251.5,75) .. (273.5,73) .. controls (385.5,89) and (392.5,82) .. (391.5,187) .. controls (371.5,238) and (278.25,280.75) .. (256.5,260) .. controls (234.75,239.25) and (225.5,170) .. (136.5,164) -- cycle ;
			\draw [color={rgb, 255:red, 128; green, 128; blue, 128 }  ,draw opacity=1 ]   (136.5,261) -- (135.51,48) ;
			\draw [shift={(135.5,46)}, rotate = 89.73] [color={rgb, 255:red, 128; green, 128; blue, 128 }  ,draw opacity=1 ][line width=0.75]    (10.93,-3.29) .. controls (6.95,-1.4) and (3.31,-0.3) .. (0,0) .. controls (3.31,0.3) and (6.95,1.4) .. (10.93,3.29)   ;
			\draw    (190,150.6) -- (189.6,37.8) -- (240.8,90.6) -- (240.3,280.4) -- (189.6,229.4) -- (190,180.2) ;
			\draw [color={rgb, 255:red, 128; green, 128; blue, 128 }  ,draw opacity=1 ]   (136.5,164) -- (432.5,164) ;
			\draw [shift={(434.5,164)}, rotate = 180] [color={rgb, 255:red, 128; green, 128; blue, 128 }  ,draw opacity=1 ][line width=0.75]    (10.93,-3.29) .. controls (6.95,-1.4) and (3.31,-0.3) .. (0,0) .. controls (3.31,0.3) and (6.95,1.4) .. (10.93,3.29)   ;
			\draw  [fill={rgb, 255:red, 128; green, 128; blue, 128 }  ,fill opacity=1 ] (215.6,132.6) .. controls (199.2,150.6) and (193.6,168.8) .. (218.4,202.8) .. controls (238,196) and (240.4,142.4) .. (215.6,132.6) -- cycle ;
			
			\draw (405,140.4) node [anchor=north west][inner sep=0.75pt]  [color={rgb, 255:red, 128; green, 128; blue, 128 }  ,opacity=1 ]  {$x_{1}$};
			\draw (142,55.4) node [anchor=north west][inner sep=0.75pt]  [color={rgb, 255:red, 128; green, 128; blue, 128 }  ,opacity=1 ]  {$x'$};
			\draw (296,97.4) node [anchor=north west][inner sep=0.75pt]    {$\Omega $};
			\draw (174.4,262.2) node [anchor=north west][inner sep=0.75pt]    {$x_{1} =c$};
			\draw (203.2,150.2) node [anchor=north west][inner sep=0.75pt]    {$c^{q} \omega $};

		\end{tikzpicture}
	}

	\caption{An example of a domain $\Omega$ with an outward peak with cross-section $\omega$.}\label{fig1}
\end{figure}
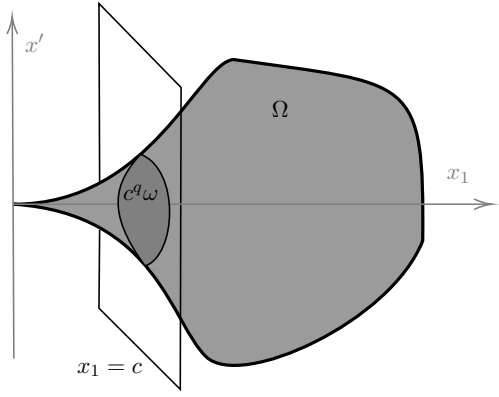

The main result of the present work is as follows:
\begin{theorem}\label{thmain}
Let $\Omega\subset\R^d$	be as in Definition~\ref{defin1} with some bounded Lipschitz cross-section $\omega\subset\R^{d-1}$ and sharpness order $q\in(1,2)$, then for any fixed $j\in\N$ one has
	\begin{equation}
		 \label{ljj}
	\lambda_j(R^\Omega_\alpha)=\left(\dfrac{\H^{d-2}(\partial\omega)}{\H^{d-1}(\omega)}\right)^\frac{2}{2-q} \lambda_j(L_1)\alpha^{\frac{2}{2-q}} +O\left(\alpha^{\frac{2}{2-q}-(q-1)}\right) \text{ for }\alpha\to +\infty,
	\end{equation}
where $L_1$ is the differential operator in $L^2(0,\infty)$ defined first by
\[
\big(L_1 f\big)(s):=-f''(s)+\bigg(\dfrac{q^2(d-1)^2-2q(d-1)}{4s^2}-\dfrac{1}{s^q}\bigg)f(s),
\quad f\in C^\infty_c(0,\infty),
\]	
and then extended using the Friedrichs extension.	
\end{theorem}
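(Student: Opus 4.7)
The strategy is a two-sided variational reduction of the Robin problem on $\Omega$ near the peak tip to the one-dimensional operator $L_1$, combined with a straightening change of variables and a partial-wave decomposition on the cross-section. Throughout, write $c_\omega:=\H^{d-2}(\partial\omega)/\H^{d-1}(\omega)$ and $\beta:=(\alpha c_\omega)^{-1/(2-q)}$, so that the target leading term in \eqref{ljj} is $\beta^{-2}\lambda_j(L_1)$. Because $q\in(1,2)$ gives $2/(2-q)>2$, the Lipschitz bound \eqref{lipschitz} applied to any bulk piece $\Omega\cap\{x_1>\delta/2\}$ contributes only $O(\alpha^2)=o(\beta^{-2})$. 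Hence, after a partition of unity separating $\Omega$ into a tip region $\Omega_-\subset\{0<x_1<\delta\}$ and a Lipschitz remainder, it suffices to work with $R_\alpha^{\Omega_-}$ together with a Neumann or Dirichlet condition at $x_1=\delta$, depending on the direction of the min-max bracketing.

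On $\Omega_-$ I would apply the diffeomorphism $(x_1,x')\mapsto(t,y'):=(x_1,x_1^{-q}x')$, which maps $\Omega_-$ onto the cylinder $C:=(0,\delta)\times\omega$. A direct computation yields
\[
\int_{\Omega_-}|\nabla u|^2\,\dd x=\int_C\bigl(|\partial_t v-q t^{-1}y'\cdot\nabla_{y'}v|^2+t^{-2q}|\nabla_{y'}v|^2\bigr)t^{q(d-1)}\,\dd t\,\dd y'
\]
for $u(x)=v(t,y')$, together with analogous weighted expressions for the $L^2$-norm and the lateral surface measure, the latter involving the factor $t^{q(d-2)}\sqrt{1+q^2 t^{2(q-1)}(n\cdot y')^2}$. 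For the upper bound in \eqref{ljj} I would insert trial functions independent of $y'$: given eigenfunctions $\phi_1,\dots,\phi_j$ of $L_1$ on $(0,\infty)$, set $v_k(t,y'):=\chi(t)\,t^{-q(d-1)/2}\phi_k(t/\beta)$ with $\chi\in C^\infty_c(0,\delta)$ a suitable cutoff. The substitution $g(t)=t^{q(d-1)/2}f(t)$ followed by the rescaling $t=\beta s$ transforms the induced one-dimensional weighted form into $\beta^{-2}$ times the quadratic form of $L_1$ up to exponentially small truncation errors, and the min-max principle delivers the upper estimate.

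For the lower bound I would impose a Neumann cut at $x_1=\delta$ and, on the cylinder $C$, decompose $v\in W^{1,2}(C)$ as $v(t,y')=\bar v(t)+w(t,y')$, with $\bar v$ the $y'$-average and $w$ of vanishing $y'$-mean. The Poincar\'e inequality on $\omega$ combined with the trace inequality yields
\[
\int_{\partial\omega}w(t,\cdot)^2\,\dd\H^{d-2}\le C\int_\omega|\nabla_{y'}w(t,\cdot)|^2\,\dd y'
\]
uniformly in $t$, so that the Robin boundary contribution of $w$ is absorbed by the transverse kinetic energy modulo a factor of order $\alpha t^{q}=O(\alpha^{(2-2q)/(2-q)})=o(1)$ on the eigenfunction scale $t\sim\beta$; the non-constant modes then contribute only a positive, absorbable correction. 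The remaining form in $\bar v$ is bounded below by $\beta^{-2}$ times the quadratic form of $L_1$ on $(0,\delta/\beta)$, whose $j$-th eigenvalue converges to $\lambda_j(L_1)$ as $\alpha\to\infty$.

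I expect the main obstacle to be the careful bookkeeping in the lower bound of three correction sources---the cross term $-2q t^{-1}(\partial_t v)(y'\cdot\nabla_{y'}v)$ in the transformed kinetic energy, the non-unit area factor $\sqrt{1+q^2 t^{2(q-1)}(n\cdot y')^2}$ in the surface element, and the non-constant cross-sectional modes---which must be estimated simultaneously on the correct spatial scale $t\sim\beta$ in order to yield the stated remainder $O(\alpha^{2/(2-q)-(q-1)})$ rather than a cruder one.
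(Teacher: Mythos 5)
Your overall architecture (truncation of the tip, straightening to the cylinder $(0,\delta)\times\omega$, reduction to $L_1$ after the substitution $g=t^{q(d-1)/2}f$ and rescaling) is the same as the paper's; the one genuinely different ingredient is the transverse decomposition. The paper projects each slice onto the \emph{Robin} ground state $\psi_{\alpha t^q}$ of the cross-section rather than onto the constant: this makes both the $L^2$ and the Robin-form cross terms vanish exactly (spectral theorem), lets the orthogonal complement be controlled via $\lambda_2(R^\omega_\sigma)\ge C>0$, and produces the coefficient $A_\omega$ through the first-order expansion $\lambda_1(R^\omega_\sigma)=-A_\omega\sigma+O(\sigma^2)$; the price is having to bound $\|\dd\psi_\sigma/\dd\sigma\|_{L^2(\omega)}$ by smoothness of the ground state in $\sigma$. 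Your constant-mode choice avoids that perturbation-theoretic input, and is perfectly adequate for the upper bound (the constant gives exactly $-\sigma A_\omega$ per slice), but it leaves the lower bound with issues your sketch does not resolve.

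Two concrete gaps in the lower bound. First, the boundary cross term does not vanish: with $v=\bar v+w$ one has $\int_{\partial\omega}\bar v\,w\,\dd\H^{d-2}=\bar v\int_{\partial\omega}w\,\dd\H^{d-2}\ne 0$ in general, because $w$ has zero mean over $\omega$ but not over $\partial\omega$. This term carries the large factor $\alpha t^{-q}$ and must be estimated separately (e.g. via $\bigl|\int_{\partial\omega}w\bigr|\le C\|\nabla_{y'}w\|_{L^2(\omega)}$ and a weighted Cauchy--Schwarz, which costs an extra $O(\alpha^2)$ per unit $L^2$ norm); it is not covered by the claim that the non-constant modes contribute ``only a positive, absorbable correction.'' Second, your absorption of the $w$-boundary term into the transverse kinetic energy rests on $\alpha t^{q}=o(1)$, which holds only for $t\lesssim\alpha^{-1/q}$, whereas the min--max lower bound must hold for arbitrary test functions supported anywhere in $(0,\delta)\times\omega$; for $t\gtrsim\alpha^{-1/q}$ the Poincar\'e--trace constant alone does not suffice. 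The repair is the full trace inequality $\|w\|^2_{L^2(\partial\omega)}\le\eta\|\nabla_{y'}w\|^2_{L^2(\omega)}+C\eta^{-1}\|w\|^2_{L^2(\omega)}$ with $\eta\sim(\alpha t^{q})^{-1}$, which absorbs the gradient part and leaves an error of order $-\alpha^2\|w\|^2$; since $2<\frac{2}{2-q}-(q-1)$ for $q\in(1,2)$, this stays within the stated remainder (this is, in effect, what the paper's Lemma~\ref{lem8} accomplishes for the region away from the tip). With these two repairs, and the cross term $-2qt^{-1}(y'\cdot\nabla_{y'}w)\,\partial_t v$ handled by weighted Cauchy--Schwarz as you indicate, your route closes.
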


The result on the individual eigenvalues is then used to show that the corresponding eigenfunctions are localized
near the peak tip, which is rigorously described using an Agmon-type estimate as follows:
\begin{theorem}\label{thmeig}
	Let the assumptions of Theorem~\ref{thmain} be fulfilled. For any $j\in\N$ and $b>0$ there exist $B>0$ and $\alpha_0>0$ such that
	for any $\alpha>\alpha_0$ and any eigenfunction $u$ of $R_\alpha^\Omega$ for the eigenvalue $\lambda_j(R_\alpha^\Omega)$
	one has
	\begin{equation}
		\label{agm0}
		\int_{\Omega}e^{2b\alpha|x|}\big|u(x)\big|^2\, \dd \H^d(x)\le B\|u\|^{2}_{L^2(\Omega)}.
	\end{equation}
\end{theorem}

Let us make some comments on the assumptions and the relations with earlier works.
All eigenvalues $\lambda_j(L_1)$ are negative, see Subsection~\ref{sec1d} and Appendix~\ref{appa} below, so the eigenvalues $\lambda_j(R^\Omega_\alpha)$ diverge to $-\infty$ much faster than in the Lipschitz case. We are not aware of any value of $q\in(1,2)$ for which the eigenvalues of $L_1$ can be computed explicitly.
The assumption $q<2$ is necessary to ensure that the Sobolev trace operator is well-defined and compact from $W^{1,2}(\Omega)$ to $L^2(\partial\Omega)$, which guarantees the required properties of the above bilinear form $r^\Omega_\alpha$ and the self-adjointness of the Robin Laplacian $R^\Omega_\alpha$ for all $\alpha>0$, see \cite[Sec.~5.2]{daners} or \cite{acosta,nt}.
At the same time, the condition $q>1$ is required to actually have a peak, as $\Omega$ would be covered by the analysis of Lipschitz domains for $q\le 1$. The case when $\omega$ is a ball of radius $\rho>0$ was already studied in the earlier paper~\cite{kov}, and its main result is recovered (with an improved remainder estimate) by using \eqref{ljj} and observing that in this case
\[
\dfrac{\H^{d-2}(\partial\omega)}{\H^{d-1}(\omega)}=\dfrac{d-1}{\rho}.
\]
It should be noted that the analysis in \cite{kov} was crucially depending on a separation of variables in the ball (and lengthy manipulations with asymptotic expansions of special functions), which is indeed unavailable for general $\omega$. The central observation in this paper is that a part of the arguments of \cite{kov} based on a separation of variables can be replaced by an argument based on the first-order perturbation theory of linear operators and on an adapted coordinate change, which are summarized in Lemma \ref{basic properties} and Section \ref{ss-cyl}. This approach was mainly motivated by observations and geometric constructions from the
later paper~\cite{vogel}. The asymptotic expansion \eqref{ljj} reflects both the sharpness $q$ (through the order in $\alpha$) and the geometry of the cross-section $\omega$ (through the coefficient in the main term), which is an essentially new contribution when compared to~\cite{kov}.
This allows to make additional observations on the interplay between the geometry and the eigenvalues. For example, if the area or the perimeter of $\omega$ is fixed, then the ratio
$\H^{d-2}(\partial\omega)/\H^{d-1}(\omega)$
appearing in the main term of the asymptotic expansion is minimized by the balls due to the isoperimetric inequality. Therefore, if the peak cross-section is a ball, then for $\alpha\to+\infty$ the individual eigenvalues of $R^\Omega_\alpha$ diverge to $-\infty$ slower than for any other peak cross-section having the same area or the same perimeter, which is reminiscent of the famous Faber-Krahn inequality:

\begin{corollary}
	Let $\Omega\subset\R^{n}$ be as in Definition \ref{defin1} with cross-section $\omega$, and denote by $\Omega_0\subset\R^{n}$ a domain that is covered by Definition \ref{defin1} and has a ball $\omega_0$ as a cross-section. If $\H^{d-1}(\omega)=\H^{d-1}(\omega_0)$ or $\H^{d-2}(\partial\omega)=\H^{d-2}(\partial\omega_0)$, then for each $j\in\N$ there exists $\alpha_j>0$ such that
	\[
	\lambda_j(R_{\Omega}^\alpha)\le \lambda_j(R_{\Omega_0}^\alpha)\quad \text{ for all $\alpha>\alpha_j$}.
	\]
\end{corollary}

\begin{remark}	While it is not directly related to the scope of the present paper, we mention the fact
	that the Robin Laplacians show a rather surprising behavior when it comes to Faber-Krahn type inequalities. On one hand,
	among the bounded Lipschitz domains $\Omega$ of fixed volume and for any $\alpha\le0$ (in which case $R^\Omega_\alpha$ is a positive operator), the first eigenvalue $\lambda_1(R^\Omega_\alpha)$ is minimized by the balls, see \cite{daners2}. It was then conjectured in \cite{bareket,brock} that a reverse Faber-Krahn inequality holds true for any fixed $\alpha<0$, i.e.
	that the first eigenvalue $\lambda_1(R^\Omega_\alpha)$ is maximized by the balls among the bounded Lipschitz domains $\Omega$ of fixed volume. However, the conjecture was disproved in \cite{FK}, and the spectral asymptotics played a central role
	in the construction of a counterexample. This delivered a rare example of a spectral isoperimetric problem
	for which the balls are not optimizers.
\end{remark}

Our analysis can be easily adapted to the case of domains with several peaks: One considers smalls neighborhoods of the peak and
analyze them separately, and each peak then produces a series of eigenvalues as described in Theorem \ref{thmain}. In particular, the asymptotics of the individual eigenvalues is then determined by the sharpest peak(s), and the respective eigenfunctions are also localized near the respective peak tips. On the other hand, if there are several peaks
of the same sharpness, then tunneling effects cannot be excluded (i.e. eigenfunctions can localize simultaneously near several peaks), but their analysis is beyond the scope of the present work. The study of Robin eigenvalues can be addressed, in principle, for more general peaks. First, one can extend the class of possible $\Omega$ by replacing the condition $x'\in x_1^q\omega$ with the more general one 
 $x'\in \varphi(x_1)\omega$, where $\varphi$ is a strictly increasing smooth function with $\varphi(0)=\varphi'(0)=0$. While some first steps of the analysis are still applicable, the absence of principal homogeneous terms in various intermediate operators poses severe problems for the description of the eigenvalue asymptotics, and no power-type asymptotics in $\alpha$ can be expected. A further possible generalization is admitting non-Lipschitz cross-sections $\omega$ (in particular, those having peaks in a suitable defined sense). This case of ``iterated peaks'' is likely to give rise to a multiscale analysis of Laplacians in several dimensions, which is expected to be at a much higher complexity level than the present work. Another extension arises if one admits so-called non-isotropic peaks
featuring different scalings in different $x'$-directions. In this case a multi-step approach seems promising, and a particular case could recently be analyzed in \cite{vogel2}. As a further potential development we mention the possibility to replace the Laplacian through a magnetic Laplacian and to study the influence of magnetic fields: For smooth domains, there are some interesting spectral effects produced by competing Robin and magnetic contributions, see e.g. \cite{fahs, HKR, kach} and references therein, but we are not aware of any works dealing with similar questions in the presence of peaks.

The overall structure of the paper and of the proof follows closely the one in \cite{kov}. Section \ref{secprel} is devoted to technical preparations. In Subsection~\ref{secrobin} we collect important facts on Robin Laplacians proved in previous works. In Subsection \ref{sec1d} we introduce a family of one-dimensional operators, which includes the operator $L_1$ appearing in Theorem \ref{thmain}, and recall their  basic spectral and asymptotic properties. Subsection \ref{secpeak} introduces
a family of model peak domains together with associated Laplace-type operators. In Section~\ref{ss-cyl}
a coordinate change is employed to map the peak domains diffeomorphically to cylindrical domains
and to control various integral terms. This part is new with respect to \cite{kov}, and it is an adaptation of some computations from \cite{vogel}. In Subsection~\ref{nopeak}, we obtain a lower bound for model operators defined outside a neighborhood
of the peak's tip. The core of the study is Section~\ref{peak}, in which we study the eigenvalues
of a model operator defined in a small neighborhood of the peak's tip and relate them to the eigenvalues of $L_1$ as summarized in Corollary~\ref{corol14}. The proof avoids using a separation of variables
and employs the general results on Robin Laplacians from Section~\ref{secrobin} instead.
In Section~\ref{secproof} we make use of a series of truncations of $\Omega$ to show that only a small neighborhood of the peak's tip counts for the main term of the eigenvalue asymptotics, and an application of Corollary~\ref{corol14} completes the proof of Theorem~\ref{thmain}. In the last section (Section \ref{eigenfunctions}) we prove Theorem \ref{thmeig}
by using an IMS-type decoupling and showing that suitable integrals of the eigenfunctions are dominated by their contributions
near the peak tip.  Most part of the analysis is of purely variational nature with the help
of the min-max principle for eigenvalues through an adapted choice of test functions.

For the rest of the paper we assume that $\Omega\subset\R^d$ satisfies the assumptions of Theorem~\ref{thmain}. This fixes once and for all the sharpness parameter $q\in(1,2)$ and the cross-section $\omega\subset\R^{d-1}$, and we denote additionally
\[
A_\omega:=\dfrac{\H^{d-2}(\partial\omega)}{\H^{d-1}(\omega)}.
\]

	\section{Preparations}\label{secprel}
	\subsection{Robin Laplacians on bounded Lipschitz domains}\label{secrobin}

	The following proposition collects important properties of the eigenvalues of the Robin Laplacians, and we refer to \cite[Lemma 2.1]{vogel} for a proof.
	
	\begin{lemma}\label{basic properties}
		Let $U\subset\R^m$ be a bounded Lipschitz domain,
		then the following properties hold true for the Robin Laplacian $R^U_\alpha$ (as defined in the introduction):
		\begin{itemize}
			
			\item[(i)]Scaling: For any $t>0$, $\alpha\in\R$, $j\in\N$ one has
			\[
			\lambda_j(R^{tU}_\alpha)=\dfrac{\lambda_j(R^{U}_{t\alpha})}{t^2}.
			\]
			
			\item[(ii)] For any $\alpha\in\R$, the first eigenvalue $\lambda_1(R^{U}_{\alpha})$
				is simple and the corresponding eigenfunctions have constant sign in $U$.
				
			\item[(iii)] The function
			\[
			\R\ni\alpha\mapsto\lambda_1(R^U_\alpha)\in\R
			\]
			is smooth. If the associated eigenfunction $\psi_\alpha$ is chosen positive with $\|\psi_\alpha\|_{L^2(U)}=1$, then the mapping
			\[
			\R\ni\alpha\mapsto \psi_\alpha\in L^2(U)
			\]
			is also smooth.

			\item[(iv)]
			
			 There exists $\phi\in L^{\infty}(0,\infty)$ such that
\begin{gather*}
\lambda_1(R^{U}_{\alpha})=-\dfrac{\H^{m-1}(\partial U)}{\H^m(U)}\alpha+\alpha^2\phi(\alpha)\text{ for all }\alpha>0.
\end{gather*}
			
			\item[(v)] If $N_U$ is the Neumann Laplacian in $U$, then
			\[
			\lim_{\alpha\to0}\lambda_2(R^{U}_{\alpha})=\lambda_2(N_U)>0.
			\]

		\end{itemize}
	\end{lemma}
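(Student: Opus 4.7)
Part (i) is a direct change of variables: for $u\in W^{1,2}(tU)$ set $\tilde u(x):=u(tx)$ and compute how the bulk gradient integral, the $L^2$ norm, and the boundary $L^2$ norm scale under $y=tx$. This yields
\[
\frac{r^{tU}_\alpha(u,u)}{\|u\|_{L^2(tU)}^2}=\frac{1}{t^2}\cdot\frac{r^U_{t\alpha}(\tilde u,\tilde u)}{\|\tilde u\|_{L^2(U)}^2},
\]
and the scaling identity follows from the min-max principle applied to both sides.

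For (ii) I would use that $u\in W^{1,2}(U)$ implies $|u|\in W^{1,2}(U)$ with $r^U_\alpha(|u|,|u|)=r^U_\alpha(u,u)$ and the same $L^2$ norm, so a first eigenfunction may be chosen non-negative; since in the interior $-\Delta u=\lambda_1 u$, the strong maximum principle forces strict positivity, and simplicity follows automatically because any second linearly independent ground state would have to be sign-changing. With (ii) available, (iii) comes from Kato's analytic perturbation theory: the sesquilinear forms $r^U_\alpha$ are affine in $\alpha$ on the fixed form domain $W^{1,2}(U)$, so $\{R^U_\alpha\}_{\alpha\in\R}$ is a self-adjoint analytic family of type (B); the simple isolated eigenvalue $\lambda_1(R^U_\alpha)$ together with its normalized positive eigenfunction then depends real-analytically (hence smoothly) on $\alpha$.

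For (iv) I would exploit (iii) at $\alpha=0$, where the Neumann ground state is $\psi_0\equiv \H^m(U)^{-1/2}$ with eigenvalue zero. The Hellmann--Feynman formula gives
\[
\left.\frac{\dd}{\dd\alpha}\lambda_1(R^U_\alpha)\right|_{\alpha=0}=\left.\partial_\alpha r^U_\alpha(\psi_0,\psi_0)\right|_{\alpha=0}=-\frac{\H^{m-1}(\partial U)}{\H^m(U)}=:-A,
\]
so $\phi(\alpha):=\alpha^{-2}\bigl(\lambda_1(R^U_\alpha)+A\alpha\bigr)$ is continuous on $(0,\infty)$ and bounded near $0$ by Taylor. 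The two-sided bound \eqref{lipschitz}, valid on the Lipschitz domain $U$, forces $\lambda_1(R^U_\alpha)=O(\alpha^2)$ as $\alpha\to+\infty$, so $\phi$ is bounded at infinity as well. Continuity on $(0,\infty)$ together with boundedness at both ends gives $\phi\in L^\infty(0,\infty)$.

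For (v), the family $\alpha\mapsto r^U_\alpha$ is affine on the fixed form domain $W^{1,2}(U)$, and since the trace $W^{1,2}(U)\to L^2(\partial U)$ is compact the perturbation $-\alpha\int_{\partial U}|\cdot|^2$ is form-compact relative to the Dirichlet form. This produces norm-resolvent convergence $R^U_\alpha\to N_U$ as $\alpha\to 0$, and hence convergence $\lambda_j(R^U_\alpha)\to\lambda_j(N_U)$ for every fixed $j$; strict positivity $\lambda_2(N_U)>0$ reflects the one-dimensional kernel of $N_U$, spanned by the constants since $U$ is connected. The main technical subtlety is the strong maximum principle in part (ii) on a merely Lipschitz domain, but it only needs to be applied in the interior where the PDE is classical; alternatively, one can bypass the PDE argument entirely by invoking irreducibility and positivity of the Robin heat semigroup $e^{-tR^U_\alpha}$.
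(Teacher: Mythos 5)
The paper itself gives no proof of Lemma~\ref{basic properties}: it simply cites Lemma~2.1 of the reference \cite{vogel}, so there is no in-paper argument to compare against. Your proof is correct and is exactly the standard route one expects (and essentially what the cited reference does): change of variables plus min--max for (i); the $|u|$-trick, interior elliptic regularity and the strong maximum principle, and the impossibility of two orthogonal sign-definite ground states for (ii); Kato's theory of self-adjoint holomorphic families of type~(B) for (iii), which applies because the boundary form is infinitesimally form-bounded with respect to the gradient form via the trace inequality $\int_{\partial U}u^2\,\dd\H^{m-1}\le \eta\|\nabla u\|^2_{L^2(U)}+C_\eta\|u\|^2_{L^2(U)}$ (this same inequality is what makes $r^U_\alpha$ closed and semibounded in the first place, and is worth stating explicitly); Hellmann--Feynman at $\alpha=0$ combined with the Lipschitz bound \eqref{lipschitz} at infinity and continuity in between for (iv); and continuity of the spectrum at $\alpha=0$ together with simplicity of the Neumann ground state on a connected domain for (v). I see no gaps.
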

	
In what follows, Lemma \ref{basic properties} will be used mainly for $m=d-1$ and $U=\omega$.
	
	\subsection{A one-dimensional operator}\label{sec1d}
	For  $\mu>0$  consider the symmetric differential operator in $L^2(0,\infty)$ given by
	\begin{equation*}
		C_c^\infty(0,\infty) \ni f \mapsto -f'' + \left(\frac{q^2(d-1)^2-2q(d-1)}{4s^2}-\frac{\mu}{s^q}\right) f
	\end{equation*}
	and denote by $L_{\mu}$ its Friedrichs extension. Its essential spectrum is $[0,+\infty)$, and it has infinitely many negative eigenvalues due to the presence of the negative long-range potential $\mu/s^q$, and all the negative eigenvalues are simple. One easily checks that for the unitary scaling transformation
	\[
	Z_c:\ L^2(0,\infty)\to L^2(0,\infty), \quad Z_c f=\sqrt{c} f(c\,\cdot),\quad c>0,
	\]
	one has
	\begin{equation}
		\label{scaling1}
	L_\mu Z_c=c^2 Z_c L_{c^{q-2}\mu} \text{ for any $\mu>0$ and $c>0$},
	\end{equation}
	which shows that $L_\mu$ is unitarily equivalent to $c^2L_{c^{q-2}\mu}$. For $c:=\mu^{\frac{1}{2-q}}$ this implies the identity
	\begin{equation*}
		\label{scaling2}
	\lambda_j(L_\mu)=\mu^\frac{2}{2-q}\lambda_j(L_1) \text{ for any $\mu>0$ and $j\in\N$.}
	\end{equation*}
	
	In what follows we will deal with truncated versions of $L_{\mu}$. Namely, for $a>0$ we denote by $L_{\mu,a}$ the Friedrichs extension in $L^2(0,a)$ of the operator
	\[
	C_{c}^\infty (0,a)\ni f\mapsto L_{\mu} f.
	\]
	By construction the form domain of $L_{\mu,a}$ is continuously embedded in $W^{1,2}_0(0,a)$, which implies that $L_{\mu,a}$ has compact resolvent. In addition, the usual mollification procedure shows that any function from $W^{1,2}(0,a)$ vanishing in some neighborhoods of the endpoints belongs to the form domain of $L_{\mu,a}$ and, moreover, such functions build a core domain for the bilinear form of $L_{\mu,a}$. We will use the following asymptotic estimate for the eigenvalues of $L_{\mu,a}$:
	\begin{lemma}\label{modelop1}
		For any $a>0$ and $j\in\N$ there exist $K>0$ and $\mu_0>0$ such that
		\[
		\mu^\frac{2}{2-q}\lambda_j(L_1)\le
		\lambda_j(L_{\mu,a})\le \mu^{\frac{2}{2-q}} \lambda_j(L_{1})+K\quad \text{for all }\mu >\mu_0.
		\]
	\end{lemma}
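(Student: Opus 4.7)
The plan is to prove the two inequalities separately via the min-max principle combined with the scaling identity $\lambda_j(L_\mu)=\mu^{2/(2-q)}\lambda_j(L_1)$ recorded just before the lemma. For the lower bound, any $\phi\in C_c^\infty(0,a)$ extended by zero to $(0,\infty)$ lies in $C_c^\infty(0,\infty)$, with the same $L^2$-norm and the same value of the form. Closing under the form norm, the form domain of $L_{\mu,a}$ embeds isometrically into that of $L_\mu$, and the min-max principle gives $\lambda_j(L_\mu)\le\lambda_j(L_{\mu,a})$, which combined with the scaling yields the left inequality.

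For the upper bound, let $\phi_1,\dots,\phi_j$ be orthonormal eigenfunctions of $L_1$ for the first $j$ eigenvalues, put $c:=\mu^{1/(2-q)}$, and set $\psi_i:=Z_c\phi_i$. By \eqref{scaling1} these are orthonormal eigenfunctions of $L_\mu$ with eigenvalues $\lambda_i^\mu:=\mu^{2/(2-q)}\lambda_i(L_1)$, and in the original variable $s$ their mass is concentrated on the scale $1/c\to 0$. Fix a cutoff $\chi\in C^\infty([0,\infty))$ with $\chi\equiv 1$ on $[0,a/2]$, $\chi\equiv 0$ on $[a,\infty)$ and $|\chi'|\le 4/a$, and let $\Tilde\psi_i:=\chi\psi_i$. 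The functions $\Tilde\psi_i$ belong to the form domain of $L_{\mu,a}$: multiplication by $\chi$ preserves the form domain of the Friedrichs extension $L_\mu$, and $\chi\psi_i$ can then be approximated by elements of $C_c^\infty(0,a)$ obtained by multiplying $C_c^\infty(0,\infty)$-approximants of $\psi_i$ by $\chi$.

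Writing $q_\mu$ for the bilinear form of $L_\mu$, the core computation rests on the purely algebraic identity
\[
q_\mu(\chi f,\chi f)=q_\mu(f,\chi^2 f)+\int_0^\infty|\chi'(s)|^2|f(s)|^2\,\dd s,
\]
obtained by expanding both sides; crucially, it requires no integration by parts, hence no boundary analysis at the singular endpoint $s=0$. Applied to $v=\sum_i a_i\psi_i$ together with the eigenvalue relation $L_\mu\psi_i=\lambda_i^\mu\psi_i$ and the identity $\langle\psi_i,\chi^2\psi_k\rangle=\delta_{ik}-\int_0^\infty(1-\chi^2)\psi_i\psi_k\,\dd s$, this reduces $q_\mu(\chi v,\chi v)$ and $\|\chi v\|^2$ to the diagonal contributions $\sum_i a_i^2\lambda_i^\mu$ and $\sum_i a_i^2$ plus correction integrals supported in $[a/2,\infty)$. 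After the substitution $t=cs$ these corrections reduce to tail integrals $\int_{ca/2}^\infty\phi_i\phi_k\,\dd t$, which are exponentially small in $ca=\mu^{1/(2-q)}a$ by the standard exponential decay of the eigenfunctions of $L_1$ associated with strictly negative eigenvalues. Hence the Rayleigh quotient of $L_{\mu,a}$ on $\mathrm{span}(\Tilde\psi_1,\dots,\Tilde\psi_j)$ is bounded from above by $\lambda_j^\mu+O\bigl(\mu^{2/(2-q)}e^{-\beta\mu^{1/(2-q)}a}\bigr)$, and the min-max principle concludes the proof: this error tends to zero as $\mu\to+\infty$ and can therefore be dominated by any fixed $K>0$ for $\mu$ large.

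The main technical point I expect to verify carefully is the membership of $\Tilde\psi_i$ in the form domain of $L_{\mu,a}$ despite $\chi$ not vanishing near $0$; once this is settled via the stability of the Friedrichs form domain under multiplication by smooth bounded functions with bounded derivative, the exponential decay of the eigenfunctions of $L_1$ makes all remaining estimates essentially automatic.
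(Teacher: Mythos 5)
Your argument is correct, and it follows essentially the route of the proof the paper defers to (the paper gives no proof of Lemma~\ref{modelop1} itself, only the reference to \cite[Sec.~3.1]{kov} plus the scaling translation): the lower bound by extending $C_c^\infty(0,a)$ test functions by zero and invoking form-domain monotonicity together with $\lambda_j(L_\mu)=\mu^{2/(2-q)}\lambda_j(L_1)$, and the upper bound by truncating the rescaled eigenfunctions $Z_c\phi_i$ with an IMS-type cutoff identity and absorbing the exponentially small (Agmon-decay) tail corrections into the constant $K$. The one technical point you flag --- membership of $\chi\psi_i$ in the Friedrichs form domain of $L_{\mu,a}$ --- is settled correctly by your multiplication-stability argument, with the minor proviso that $\chi$ should be chosen with support in, say, $[0,3a/4]$ so that the approximants $\chi\varphi_n$ genuinely lie in $C_c^\infty(0,a)$.
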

	The proof is given in \cite[Sec.~3.1]{kov} for a slightly different choice of parameters, and it is translated into our language using the scaling \eqref{scaling1}.

\subsection{Finite peaks and related operators}\label{secpeak} For $\eps>0$ we will consider various finite pieces of the infinite peak
\[
V_\eps:=\Big\{(x_1,x')\in\R^d:\ x_1\in(0,\infty),\ x'\in \eps x_1^q\omega\Big\}.
\]
Namely, for an open interval $I\subset(0,\infty)$ 
it will be convenient to denote
\begin{align*}
	V_{\eps,I}&:= V_\eps \cap\Big( I\times\R^{d-1}\Big)
\equiv \Big\{(x_1,x')\in\R^d:\ x_1\in I,\ x'\in \eps x_1^q\omega\Big\},
\end{align*}
see Figure~\ref{fig2}.
In particular, one has $V_\eps=V_{\eps,(0,\infty)}$. We will also consider the ``lateral boundary'' $\partial_0 V_{\eps,I}$ of $V_{\eps,I}$ given by
\begin{align*}
	\partial_0V_{\eps,I}&:= \partial V_\eps \cap\Big( I\times\R^{d-1}\Big)
	\equiv \Big\{(x_1,x')\in\R^d:\ x_1\in I,\ x'\in \eps x_1^q\partial\omega\Big\}.
\end{align*}

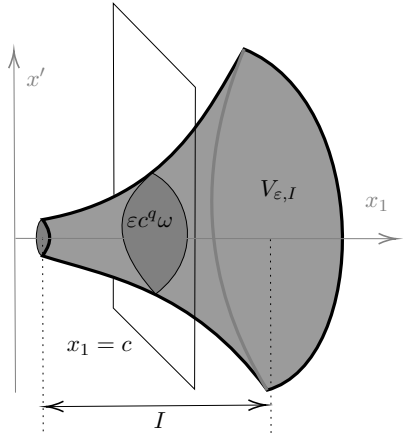
\begin{figure}
	\centering
	
	\scalebox{0.8}{
	\begin{tikzpicture}[x=0.75pt,y=0.75pt,yscale=-1,xscale=1]
		
		\draw [color={rgb, 255:red, 128; green, 128; blue, 128 }  ,draw opacity=1 ]   (154.6,262.2) -- (153.61,49.2) ;
		\draw [shift={(153.6,47.2)}, rotate = 89.73] [color={rgb, 255:red, 128; green, 128; blue, 128 }  ,draw opacity=1 ][line width=0.75]    (10.93,-3.29) .. controls (6.95,-1.4) and (3.31,-0.3) .. (0,0) .. controls (3.31,0.3) and (6.95,1.4) .. (10.93,3.29)   ;
		\draw  [fill={rgb, 255:red, 128; green, 128; blue, 128 }  ,fill opacity=1 ] (167.6,164.8) .. controls (167.6,158.5) and (169.35,153.4) .. (171.5,153.4) .. controls (173.65,153.4) and (175.4,158.5) .. (175.4,164.8) .. controls (175.4,171.1) and (173.65,176.2) .. (171.5,176.2) .. controls (169.35,176.2) and (167.6,171.1) .. (167.6,164.8) -- cycle ;
		\draw  [dash pattern={on 0.84pt off 2.51pt}]  (171.5,164.8) -- (171.9,287.8) ;
		\draw    (175.6,270.99) -- (308.8,270.01) ;
		\draw [shift={(310.8,270)}, rotate = 179.58] [color={rgb, 255:red, 0; green, 0; blue, 0 }  ][line width=0.75]    (10.93,-3.29) .. controls (6.95,-1.4) and (3.31,-0.3) .. (0,0) .. controls (3.31,0.3) and (6.95,1.4) .. (10.93,3.29)   ;
		\draw [shift={(173.6,271)}, rotate = 359.58] [color={rgb, 255:red, 0; green, 0; blue, 0 }  ][line width=0.75]    (10.93,-3.29) .. controls (6.95,-1.4) and (3.31,-0.3) .. (0,0) .. controls (3.31,0.3) and (6.95,1.4) .. (10.93,3.29)   ;
		\draw  [color={rgb, 255:red, 0; green, 0; blue, 0 }  ,draw opacity=1 ][fill={rgb, 255:red, 155; green, 155; blue, 155 }  ,fill opacity=1 ][line width=1.5]  (171.5,153.4) .. controls (250.4,139.4) and (274.4,85) .. (297.2,46.2) .. controls (377.2,79) and (375.6,242.6) .. (311.6,260.6) .. controls (264.8,196.2) and (210,188.6) .. (171.5,176.2) .. controls (174.8,171) and (179.6,164.6) .. (171.5,153.4) -- cycle ;
		\draw  [dash pattern={on 0.84pt off 2.51pt}]  (313.6,165.8) -- (314,288.8) ;
		\draw    (216,137.8) -- (215.6,17.4) -- (266.8,70.2) -- (266.3,260) -- (215.6,209) -- (215.6,190.6) ;
		\draw [color={rgb, 255:red, 128; green, 128; blue, 128 }  ,draw opacity=1 ][line width=1.5]    (311.6,260.6) .. controls (293.6,230.6) and (252.4,154.2) .. (297.2,46.2) ;
		\draw  [fill={rgb, 255:red, 128; green, 128; blue, 128 }  ,fill opacity=1 ] (240,124.2) .. controls (220.4,141) and (208.8,163) .. (242,200.2) .. controls (274.4,175.4) and (262.4,135.4) .. (240,124.2) -- cycle ;
		\draw [color={rgb, 255:red, 128; green, 128; blue, 128 }  ,draw opacity=1 ]   (154.6,165.2) -- (390,165.4) ;
		\draw [shift={(392,165.4)}, rotate = 180.05] [color={rgb, 255:red, 128; green, 128; blue, 128 }  ,draw opacity=1 ][line width=0.75]    (10.93,-3.29) .. controls (6.95,-1.4) and (3.31,-0.3) .. (0,0) .. controls (3.31,0.3) and (6.95,1.4) .. (10.93,3.29)   ;
		
		\draw (371.5,138) node [anchor=north west][inner sep=0.75pt]  [color={rgb, 255:red, 128; green, 128; blue, 128 }  ,opacity=1 ]  {$x_{1}$};
		\draw (160.1,56.6) node [anchor=north west][inner sep=0.75pt]  [color={rgb, 255:red, 128; green, 128; blue, 128 }  ,opacity=1 ]  {$x'$};
		\draw (185.3,228.6) node [anchor=north west][inner sep=0.75pt]    {$x_{1} =c$};
		\draw (239.6,273.2) node [anchor=north west][inner sep=0.75pt]    {$I$};
		\draw (222.8,146.6) node [anchor=north west][inner sep=0.75pt]    {$\varepsilon c^{q} \omega $};
		\draw (306,127.2) node [anchor=north west][inner sep=0.75pt]    {$V_{\varepsilon ,I}$};

	\end{tikzpicture}
	}
	
	\caption{An example of the model peak domain $V_{\eps,I}$.}\label{fig2}
	
\end{figure}

At several places we will work with functions localized in the first variable, so let us introduce
an adapted notation. For an open set $U\subset\R^d$ and an open interval $I\subset\R$ it will be convenient to denote
\begin{align*}
	W^{1,2}_I(U):=\Big\{& u\in W^{1,2}(U):\ \exists c,c'\in I \text{ such that }u(x)=0 \text{ for all $x\in U$ with $x_1\notin[c,c']$}\Big\}.
\end{align*}

For $\alpha\in\R$ and an open interval $I\subset(0,\infty)$ consider the symmetric bilinear form $t^{\alpha,N}_{\eps, I}$ given by
\begin{align*}
	t^{\alpha,N}_{\eps,I}(u,u)&:=\int_{V_{\eps,I}}|\nabla u|^2\dd\H^d-\alpha\int_{\partial_0 V_{\eps,I}} u^2\dd\H^{d-1},\qquad
	\cD(t^{\alpha,N}_{\eps,I}):=W^{1,2}(V_{\eps,I}).
\end{align*}

In the final steps of the proof of the main theorem we will use the following density result:
\begin{lemma}\label{lem5}
	For any open bounded interval $I\subset(0,\infty)$ and any $\eps>0$ the set $W^{1,2}_{(0,\infty)}(V_{\eps,I})$ is dense in $W^{1,2}(V_{\eps,I})$.
\end{lemma}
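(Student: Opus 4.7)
My approach is to show the stronger assertion $W^{1,2}_{(0,\infty)}(V_{\eps,I})=W^{1,2}(V_{\eps,I})$, from which the density is immediate. Write $I=(a,b)$ with $0<a<b<\infty$. Given any $u\in W^{1,2}(V_{\eps,I})$, I would choose the parameters $c,c'\in(0,\infty)$ appearing in the definition of $W^{1,2}_{(0,\infty)}(V_{\eps,I})$ simply as $c:=a/2$ and $c':=2b$. Both are positive and finite, hence lie in $(0,\infty)$. Every point $x\in V_{\eps,I}$ satisfies $x_1\in(a,b)\subset[c,c']$, so the pointwise condition ``$u(x)=0$ for all $x\in V_{\eps,I}$ with $x_1\notin[c,c']$'' in the definition of $W^{1,2}_{(0,\infty)}(V_{\eps,I})$ is vacuously true. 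Therefore $u\in W^{1,2}_{(0,\infty)}(V_{\eps,I})$, which gives the set equality and, a fortiori, the asserted density.

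The only thing to monitor is the role of the hypothesis that $I$ is bounded inside the open half-line $(0,\infty)$: this ensures both endpoints $a$ and $b$ are strictly between $0$ and $\infty$, so the enlarged interval $[c,c']$ can be kept inside $(0,\infty)$. If $I$ were allowed to reach the peak tip at $x_1=0$ or to extend to infinity, the support condition in the definition of $W^{1,2}_{(0,\infty)}$ would become a genuine constraint and the density would turn into a substantive approximation problem in $W^{1,2}$ (which in general would fail, since the $W^{1,2}$-trace on the cross-sectional caps of a function cannot be eliminated by approximation). Within the present bounded setting there is accordingly no real obstacle; the lemma is a tautological bookkeeping identity that will later allow one to identify a $W^{1,2}$-function on $V_{\eps,I}$ with its zero extension to a larger sub-peak in arguments that compare operators defined on different pieces of the peak.
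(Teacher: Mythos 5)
Your argument only covers the trivial case and misses the one the lemma is actually about. The hypothesis is that $I$ is an open bounded interval with $I\subset(0,\infty)$; this does \emph{not} force the left endpoint of $I$ to be positive. The interval $I=(0,\ell)$ is open, bounded, and a subset of $(0,\infty)$ (the point $0$ itself is not in $I$), so it satisfies the hypothesis, and it is precisely this case that is needed later: in the proof of the lower bound the lemma is invoked for $W^{1,2}_{(0,\infty)}(V_{\alpha^{1-q},(0,a\alpha)})$. For $I=(0,\ell)$ your choice $c:=a/2$ is unavailable ($a=0$), and membership in $W^{1,2}_{(0,\infty)}(V_{\eps,I})$ genuinely requires $u$ to vanish on the nonempty set $\{x\in V_{\eps,I}: x_1<c\}$ for some $c>0$, i.e.\ in a neighborhood of the peak's tip. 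So the set equality you claim fails, and the density is a substantive statement, not bookkeeping. When $0$ is not an endpoint of $I$ your observation is correct and coincides with the first line of the paper's proof, but that is the part where ``there is nothing to prove.''

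Your parenthetical claim that in the tip case the density ``would in general fail'' because of a trace obstruction is also wrong, and wrongly suggests the lemma as stated is false. At $x_1=0$ the cross-section degenerates to a point of $\H^{d-1}$-measure zero, so there is no cap trace to obstruct the approximation. The actual proof first reduces to $u\in W^{1,2}\cap L^\infty$ (Maz'ya's density theorem) and then multiplies by a cutoff $\chi(x_1/\mu)$; the only delicate term is
\[
\frac{1}{\mu^2}\int_0^{2\mu}\H^{d-1}(\eps x_1^q\omega)\,\|u\|_\infty^2\,\dd x_1
= O\big(\mu^{q(d-1)-1}\big),
\]
which tends to $0$ as $\mu\to0^+$ precisely because $q(d-1)>1$, i.e.\ because the peak is sharp enough. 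That quantitative step is the content of the lemma and is entirely absent from your proposal.
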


\begin{proof}
Remark that $W^{1,2}_{(0,\infty)}(V_{\eps,I})$ consists of the functions in $W^{1,2}(V_{\eps,I})$ that vanish in a neighborhood of the origin. If $0$ is not an endpoint of $I$, then $W^{1,2}_{(0,\infty)}(V_{\eps,I})=W^{1,2}(V_{\eps,I})$, and there is nothing to prove. Therefore, from now on we consider the case $I=(0,\ell)$ with $\ell\in(0,\infty)$.

From the general theory of Sobolev spaces it is known that
$W^{1,2}(V_{\eps,I})\cap L^\infty(V_{\eps,I})$ is a dense subspace of $W^{1,2}(V_{\eps,I})$,
see~\cite[Theorem in Sec.~1.4.3]{mazya}. Therefore, it is sufficient to show that any $u\in W^{1,2}(V_{\eps,I})\cap L^\infty(V_{\eps,I})$ can be approximated by functions from $W^{1,2}_{(0,\infty)}(V_{\eps,I})$.
Pick a function $\chi\in C^\infty(0,\infty)$ such that
\[
0\le \chi\le 1,\qquad 
\chi(s)=0 \text{ for }s<1,\qquad \chi(s)=1 \text{ for }s>2,
\]
and for small $\mu>0$ consider the functions
\[
u_\mu:\ (x_1,x')\mapsto \chi\Big(\frac{x_1}{\mu}\Big)u(x_1,x').
\]
By construction $u_{\mu}\in W^{1,2}(V_{\eps,I})$ with $u_{\mu}(x_1,x')=0$ for $x_1<\mu$, so $u_{\mu}\in W^{1,2}_{(0,\infty)}(V_{\eps,I})$. We are going to show that $u_\mu$ converges to $u$ in
$W^{1,2}(V_{\eps,I})$ for $\mu\to 0^+$.

Using the dominated convergence theorem one shows that
$u_\mu$ converges to $u$ in $L^2(V_{\eps,I})$ for $\mu\to 0^+$.
For each $j\ge 2$ we have
\[
\partial_j u_\mu:\ (x_1,x')\mapsto \chi\Big(\frac{x_1}{\mu}\Big)\partial_j u(x_1,x'),
\]
and the same argument shows the convergence of $\partial_j u_\mu$ to $\partial_j u$ in $L^2(V_{\eps,I})$ for $\mu\to 0^+$. Furthermore,
\[
\partial_1 u_\mu:\ (x_1,x')\mapsto \frac{1}{\mu}\chi'\Big(\frac{x_1}{\mu}\Big)u(x_1,x')
+ \chi\Big(\frac{x_1}{\mu}\Big)\partial_1 u(x_1,x'),
\]
in particular, $\partial_1 u_\mu(x_1,x')=\partial_1 u(x_1,x')$ for $x_1>2\mu$, and
\begin{align*}
\|\partial_1 u_\mu&-\partial_1 u\|^2_{L^2(V_{\eps,I})}=	\|\partial_1 u_\mu-\partial_1 u\|^2_{L^2(V_{\eps,(0,2\mu)})}\\
&=\int_0^{2\mu} \int_{\eps x_1^q \omega}\bigg(\frac{1}{\mu}\chi'\Big(\frac{x_1}{\mu}\Big)u(x_1,x')
+ \Big[\chi\Big(\frac{x_1}{\mu}\Big)-1\Big]\partial_1 u(x_1,x')\bigg)^2\dd \H^{d-1}(x')\,\dd x_1\\
&\le I'_\mu+I''_\mu,\\
I'_\mu&:=\dfrac{2}{\mu^2}\int_0^{2\mu} \int_{\eps x_1^q \omega} \Big[\chi'\Big(\frac{x_1}{\mu}\Big)u(x_1,x')\Big]^2\dd \H^{d-1}(x')\,\dd x_1,\\
I''_\mu&:=2\int_0^{2\mu} \int_{\eps x_1^q \omega}
\bigg(\Big[\chi\Big(\frac{x_1}{\mu}\Big)-1\Big]\partial_1 u(x_1,x')\bigg)^2\dd \H^{d-1}(x')\,\dd x_1.
\end{align*}
Using $q(d-1)>1$ we obtain
\begin{align*}
I'_\mu&\le \dfrac{2}{\mu^2}\int_0^{2\mu} \int_{\eps x_1^q \omega} \|\chi'\|_\infty^2 \|u\|_\infty^2\dd \H^{d-1}(x')\,\dd x_1\\
&=\dfrac{2 \|\chi'\|_\infty^2 \|u\|_\infty^2}{\mu^2} \int_0^{2\mu} \H^{d-1}(\eps x_1^q \omega) \,\dd x_1\\
&=\dfrac{2 \|\chi'\|_\infty^2 \|u\|_\infty^2}{\mu^2} \int_0^{2\mu} (\eps x_1^q)^{d-1}\H^{d-1}(\omega)\,\dd x_1\\
&=\dfrac{2 \|\chi'\|_\infty^2 \|u\|_\infty^2\eps^{d-1} \H^{d-1}(\omega)}{\mu^2} \int_0^{2\mu} x_1^{q(d-1)}\dd x_1\\
&=\dfrac{2 \|\chi'\|_\infty^2 \|u\|_\infty^2\eps^{d-1} \H^{d-1}(\omega)}{\mu^2} \dfrac{(2\mu)^{q(d-1)+1}}{q(d-1)+1}\\
&=\dfrac{2^{q(d-1)+2} \|\chi'\|_\infty^2 \|u\|_\infty^2\eps^{d-1} \H^{d-1}(\omega)}{q(d-1)+1} \mu^{q(d-1)-1}\xrightarrow{\mu\to 0^+}0,
\end{align*}
while $I''_\mu$ converges to $0$ for $\mu\to 0^+$ due to the dominated convergence theorem,
which implies the convergence of $\partial_1 u_\mu$ to $\partial_1 u$ in $L^2(V_{\eps,I})$ for $\mu\to 0^+$.
\end{proof}

Denote
\[
\widehat{W}_0^{1,2}(V_{\eps,I}):=\text{the closure of }W^{1,2}_I(V_{\eps,I})\text{ in }W^{1,2}(V_{\eps,I})
\]
and consider the symmetric bilinear form
\[
t^{\alpha,D}_{\eps, I}:=\text{the restriction of  $t^{\alpha,N}_{\eps, I}$ on $\widehat{W}_0^{1,2}(V_{\eps,I})$.}
\]
By construction we have:
\begin{equation}
\begin{aligned}
	W^{1,2}_I(V_{\eps,I})&\text{ is a core domain of }t^{\alpha,D}_{\eps,I},
\end{aligned}
 \label{eq-core}
\end{equation}
which will simplify the subsequent considerations.

In what follows we will be interested in the spectral analysis of the self-adjoint operators $T^{\alpha,N/D}_{\eps,I}$
acting in $L^2(V_{\eps,I})$ and defined by the forms $t^{\alpha,N/D}_{\eps,I}$. Note that for any $\alpha>0$ and any $I$ one has
\begin{align*}
		\alpha V_{1,I}&=\Big\{ (x_1,x')\in\R^d:\ \big(\dfrac{x_1}{\alpha},\dfrac{x'}{\alpha}\big)\in V_{1,I}\Big\}\\
		&=\Big\{ (x_1,x')\in\R^d:\ \dfrac{x_1}{\alpha}\in I,\ \dfrac{x'}{\alpha}\in \big(\dfrac{x_1}{\alpha}\big)^q \omega\Big\}\\
		&=\Big\{ (x_1,x')\in\R^d:\ x_1\in \alpha I,\ x'\in \alpha^{1-q}x_1^q \omega\Big\}\equiv V_{\alpha^{1-q},\alpha I}.
\end{align*}
Therefore, a simple scaling argument shows that for any $j\in\N$ one has the relations
\begin{equation}
	\label{lscal}
\lambda_j(T^{\alpha,N/D}_{1,I})=\alpha^2 \lambda_j(T^{1,N/D}_{\alpha^{1-q},\alpha I}).
\end{equation}
Our next goal is to obtain lower and upper bounds for the eigenvalues of $T^{\alpha,N/D}_{\eps, I}$
with the help of suitable coordinate changes.

\subsection{Reduction to cylindrical domains}\label{ss-cyl}
For $\eps>0$ consider the diffeomorphism
\[
F_\eps:(0,\infty)\times\R^{d-1}\to(0,\infty)\times\R^{d-1},\qquad F_\eps(s,t):=(s,\eps s^qt),
\]
then for any open interval $I\subset(0,\infty)$ we have	
\[
F_\eps(\Pi_I)=V_{\eps,I} \quad \text{for}\quad
\Pi_I:=I\times\omega.
\]
In order to deal with various integrals under the change of variables defined by $F_\eps$,
we will perform some preliminary computations. Let us introduce the constant
\[
R_\omega:=\sup_{t\in\omega}|t|.
\]

\begin{lemma}\label{two side est of bdy integral}
	For any $\eps>0$, any measurable function $v:\partial_0 V_{\eps,I}\to\R$ and $u:=v\circ F_\eps$ it holds
	\begin{align*}\label{bdy integral est0}
		\eps^{d-2}\int_I&s^{q(d-2)}\int_{\partial\omega}\big|u(s,t)\big|\,\dd\H^{d-2}(t)\,\dd s\leq
		\int_{\partial_0 V_{\eps,I}}|v|\,\dd\H^{d-1}\\
		&\leq\eps^{d-2}\int_I
		s^{q(d-2)}\sqrt{1+\eps^2q^2\,R_\omega^2\,s^{2q-2}}\int_{\partial\omega}\big|u(s,t)\big|\,\dd\H^{d-2}(t)\,\dd s\notag.
	\end{align*}
\end{lemma}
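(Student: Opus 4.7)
The plan is to parameterize $\partial_0 V_{\eps,I}$ over $I\times \partial\omega$ via $F_\eps$, to compute the induced $(d-1)$-dimensional area element through the Gram determinant of the tangent vectors, and to extract the two-sided bound by pointwise estimating a single scalar factor in that area element. Since $\omega$ is a bounded Lipschitz domain, the outer unit normal $\nu(t)$ to $\partial\omega$ exists for $\H^{d-2}$-a.e.\ $t\in\partial\omega$, and the Lipschitz area formula applies to $F_\eps$ restricted to $\overline{I'}\times\partial\omega$ for any $\overline{I'}\subset\subset I$.

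First I would fix $t\in\partial\omega$ where $\nu(t)$ exists and choose an orthonormal basis $\tau_1,\dots,\tau_{d-2}$ of the tangent space to $\partial\omega$ at $t$. With $\Phi(s,t):=F_\eps(s,t)=(s,\eps s^q t)$ the $(d-1)$ tangent vectors to $\partial_0 V_{\eps,I}$ at $\Phi(s,t)$ are
\[
\partial_s\Phi=(1,\eps q s^{q-1}t),\qquad \partial_{\tau_j}\Phi=(0,\eps s^q\tau_j),\quad j=1,\dots,d-2,
\]
and, using $\tau_j\cdot\tau_k=\delta_{jk}$, their Gram matrix $G$ has the block form
\[
G=\begin{pmatrix} 1+\eps^2 q^2 s^{2q-2}|t|^2 & \eps^2 q s^{2q-1}\,T^{\top}\\ \eps^2 q s^{2q-1}\,T & \eps^2 s^{2q}\, I_{d-2}\end{pmatrix},\qquad T=(t\cdot\tau_1,\ldots,t\cdot\tau_{d-2})^{\top}.
\]

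Next, the Schur complement formula gives
\[
\det G=\eps^{2(d-2)}\,s^{2q(d-2)}\Big(1+\eps^2 q^2 s^{2q-2}\big(|t|^2-\textstyle\sum_{j=1}^{d-2}(t\cdot \tau_j)^2\big)\Big)=\eps^{2(d-2)}\,s^{2q(d-2)}\big(1+\eps^2 q^2 s^{2q-2}(t\cdot\nu(t))^2\big),
\]
where the last equality uses the orthogonal decomposition $|t|^2=(t\cdot\nu)^2+\sum_j(t\cdot\tau_j)^2$ in $\R^{d-1}$. The Lipschitz area formula then yields the identity
\[
\int_{\partial_0 V_{\eps,I}}|v|\,\dd\H^{d-1}=\eps^{d-2}\int_I s^{q(d-2)}\int_{\partial\omega}|u(s,t)|\sqrt{1+\eps^2 q^2 s^{2q-2}(t\cdot\nu(t))^2}\,\dd\H^{d-2}(t)\,\dd s.
\]

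Finally, I close the argument with the pointwise bound $1\le\sqrt{1+\eps^2 q^2 s^{2q-2}(t\cdot\nu(t))^2}\le\sqrt{1+\eps^2 q^2 R_\omega^2\,s^{2q-2}}$, valid for $\H^{d-2}$-a.e.\ $t\in\partial\omega$ thanks to $|t\cdot\nu(t)|\le|t|\le R_\omega$. Substituting into the identity above gives both inequalities of the lemma. I expect the main technical step to be the Gram determinant computation itself: the cross terms coming from $\partial_s\Phi$ and $\partial_{\tau_j}\Phi$ produce a nontrivial off-diagonal block, and only the Schur complement together with the orthogonal split of $t$ exposes the clean factor $\eps^{d-2}s^{q(d-2)}\sqrt{1+\eps^2q^2s^{2q-2}(t\cdot\nu(t))^2}$ that is then uniformly controlled by $R_\omega$.
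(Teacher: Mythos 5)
Your proposal is correct and follows essentially the same route as the paper: both compute the Gram determinant of the parametrization $(s,t)\mapsto(s,\eps s^q t)$ via the block/Schur-complement formula, identify the leftover term as $\eps^2q^2s^{2q-2}(t\cdot\nu(t))^2$ (the paper writes this as $|\psi(z)|^2\,|\partial_\nu h|^2$ with $h(t)=|t|$, which is the same quantity), and then bound it between $0$ and $\eps^2q^2s^{2q-2}R_\omega^2$. The only cosmetic difference is that you work with an orthonormal tangent frame at a.e.\ point of $\partial\omega$ rather than with local charts and a partition of unity.
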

\begin{proof}
	It is sufficient to prove the result for the functions $v$ supported in images of  local charts, then it is extended to general functions by using a partition of unity. Let
	\[
	U\ni z=(z_1,z_2,\cdots,z_{d-2})\mapsto\psi(z)\in \partial\omega
	\]
	be a local chart on $\partial\omega$, then $U\ni z\mapsto\eps\psi(z)\in \partial(\eps\omega)$
	is a local chart on $\partial(\eps\omega)$, and
	\[
	\Psi_\eps:\ I\times U\ni(s,z)\mapsto\big(s, \eps s^q \psi(z)\big)\equiv F_\eps\big(s,\psi(z)\big)\in\partial_0 V_{\eps,I}
	\]
	is a local chart on $\partial_0 V_{\eps,I}$. If $v$ is supported in the image of $\Psi_\eps$, then the expression of $\H^{d-1}$ on hypersurfaces gives
	\begin{equation}\label{bdy integral est}
		\int_{\partial_0 V_{\eps,I}}|v|\,\dd\H^{d-1}=\int_I\int_{U}\Big|v\big(\Psi_\eps(s,z)\big)\Big|J_\eps(s,z)\,\dd\H^{d-2}(z)\dd s
	\end{equation}
	with
	\[
	J_\eps(s,z):=\sqrt{\det\big(D\Psi_\eps(s,z)^TD\Psi_\eps(s,z)\big)}.
	\]
	A direct computation gives
	\[
	D\Psi_\eps(s,z)=\begin{pmatrix}
		1 & 0 & \dots & 0\\
		\eps q s^{q-1}\psi(z) & \eps s^q \partial_1 \psi(z) & \dots & \eps s^q \partial_{d-2}\psi(z)
	\end{pmatrix},
	\]
	which yields
	\[
	D\Psi_\eps(s,z)^TD\Psi_\eps(s,z)=
	\begin{pmatrix}
		1+\eps^2q^2s^{2q-2}\big|\psi(z)\big|^2 & \eps^2qs^{2q-1}H(z)^T\,\\
		\\
		\eps^2qs^{2q-1}H(z) & \eps^2s^{2q}G(z)
	\end{pmatrix}
	\]
	with $G(z):=D\psi(z)^T D\psi(z)$ and
	\[		
	H(z):=
	\begin{pmatrix}
		\big\langle\psi(z), \partial_1\psi(z)\big\rangle\\
		\vdots\\
		\big\langle\psi(z), \partial_{d-2}\psi(z)\big\rangle\\
	\end{pmatrix}
	\equiv\Big(|\psi|\nabla|\psi|\Big)(z).
	\]
	
	Since $\psi$ is a local chart, the matrix $G (z)$ is invertible for a.e. $z$, and a standard formula for computing the determinant of a block matrix with an invertible diagonal block
	becomes applicable, which leads to
	\begin{align*}
		J_\eps(s,z)^2&=\eps^{2(d-2)}s^{2q(d-2)}\det G(z)\Big[ 1+\eps^2q^2s^{2q-2}\big|\psi(z)\big|^2
		-\eps^2q^2s^{2q-2} H(z)^T G(z)^{-1} H(z)\Big]\\
		&=\eps^{2(d-2)}s^{2q(d-2)}\det G(z)\Big[
		1+\eps^2q^2s^{2q-2}\big|\psi(z)\big|^2\Big( 1- \Big\langle \nabla|\psi|(z), G(z)^{-1} \nabla|\psi|(z)\Big\rangle\Big)\Big].
	\end{align*}
	Consider the modulus function
	\[
	h:\ \partial\omega\ni t\mapsto |t|\in \R,
	\]
	then
	\begin{gather*}
		\Big\langle \nabla|\psi|(z), G(z)^{-1} \nabla|\psi|(z)\Big\rangle=|\nabla^{\partial\omega}h|^2\big(\psi(z)\big),\\
		\nabla^{\partial\omega}h:=\text{the tangential gradient of $h$ along $\partial\omega$,}
	\end{gather*}
	and
	\begin{align*}
		1- \Big\langle \nabla|\psi|(z), G(z)^{-1} \nabla|\psi|(z)\Big\rangle&=
		|\nabla h|^2\big(\psi(z)\big)-|\nabla^{\partial\omega}h|^2\big(\psi(z)\big)=|\partial_{\nu}h|^2\big(\psi(z)\big),
	\end{align*}
	where $\partial_\nu$ stands for the normal derivative on $\partial\omega$. 
	Therefore,
	\[
	J_\eps(s,z)^2=\eps^{2(d-2)}s^{2q(d-2)}\det G(z)\Big[
	1+\eps^2q^2s^{2q-2}\big|\psi(z)\big|^2 |\partial_{\nu}h|^2\big(\psi(z)\big)\Big],
	\]
	and one arrives at the obvious lower bound
	\[
	J_\eps(s,z)^2\ge \eps^{2(d-2)}s^{2q(d-2)}\det G(z).
	\]
	Furthermore, due to the definition of $h$ we have $|\partial_\nu h|\le 1$ a.e,
	which implies
	\begin{align*}
		J_\eps(s,z)^2&\le \eps^{2(d-2)}s^{2q(d-2)}\det G(z)\Big(
		1+\eps^2q^2s^{2q-2}\big|\psi(z)\big|^2\Big)\\
		&\le
		\eps^{2(d-2)}s^{2q(d-2)}\det G(z) \Big(
		1+\eps^2q^2s^{2q-2} R_\omega^2\Big).
	\end{align*}
	By substituting these bounds for $J_\eps(s,z)^2$ into \eqref{bdy integral est}
	we arrive at
	\begin{align*}
		\eps^{d-2}&\int_I\int_U s^{q(d-2)}\Big|v\big(\Psi_{\epsilon}(s,z)\big)\Big|\sqrt{\det G(z)}
		\,\dd\H^{d-2}(z)\,\dd s			
		\leq\int_{\partial_0 V_{\eps,I}}|v|\dd\mathcal{H}^{d-1}
		\\
		&\leq\eps^{d-2}\int_I\int_U s^{q(d-2)}\sqrt{1+\eps^2q^2s^{2q-2}R^2_\omega}\Big|v\big(\Psi_{\epsilon}(s,z)\big)\Big|\sqrt{\det G(z)} \,\dd\H^{d-2}(z)\,\dd s. 
	\end{align*}
It remains to note that $v\big(\Psi_{\epsilon}(s,z)\big)=u(s,\psi(z))$ and
\[
\int_U \big|u(s,\psi(z))\big|\sqrt{\det G(z)} \,\dd\H^{d-2}(z)=\int_{\partial\omega} \big|u(s,\cdot)\big|\dd\H^{d-2}. \qedhere
\]
\end{proof}

\begin{lemma}\label{two side est of gradient}
	For any $v\in W^{1,2}(V_{\eps,I})$ and $u:=v\circ F_\eps$ we have
\begin{align*}
			\eps^{d-1}\int_Is^{q(d-1)}&\int_{\omega}\Bigg[\big(1-(d-1)\eps qR_\omega\big)|\partial_su|^2\\
			&+\left(\dfrac{1}{\eps^2s^{2q}}-\frac{qR_\omega}{s^2\eps}-(d-1)\frac{q^2R^2_\omega}{s^2}\right)|\nabla_tu|^2\Bigg]\,\dd\H^{d-1}(t)\,\dd s\\
			&\leq\int_{V_{\eps,I}}|\nabla v|^2\dd\H^d\\
			&\leq\eps^{d-1}\int_I s^{q(d-1)}\int_{\omega}\Bigg[\left(1+(d-1)\eps qR_\omega\right)|\partial_su|^2\\
			&\qquad+\left(\dfrac{1}{\eps^2s^{2q}}+\frac{qR_\omega}{s^2\eps}+(d-1)\frac{q^2R^2_\omega}{s^2}\right)|\nabla_tu|^2\Bigg]\,\dd\H^{d-1}(t)\,\dd s.
\end{align*}
\end{lemma}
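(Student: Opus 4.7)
The plan is to pull the integral on $V_{\eps,I}$ back to the cylinder $\Pi_I = I\times\omega$ via the diffeomorphism $F_\eps(s,t) = (s,\eps s^q t)$, reduce the estimate to a pointwise inequality on $\Pi_I$, and conclude using Young's inequality. By a standard approximation argument (smooth functions are dense in $W^{1,2}(V_{\eps,I})$, and the whole estimate extends by continuity), I may assume $v$ smooth, so that the chain rule applies directly to $u := v\circ F_\eps$. Differentiating the identity $v(s,\eps s^q t) = u(s,t)$ gives $(\partial_{x'_j} v)\circ F_\eps = \frac{1}{\eps s^q}\partial_{t_j} u$ and $(\partial_{x_1} v)\circ F_\eps = \partial_s u - \frac{q}{s}\,t\cdot\nabla_t u$, whence
\[
|\nabla v|^2\circ F_\eps = \Big(\partial_s u - \tfrac{q}{s}\,t\cdot\nabla_t u\Big)^{\!2} + \tfrac{1}{\eps^2 s^{2q}}\,|\nabla_t u|^2.
\]

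Next I would compute the Jacobian: $DF_\eps(s,t)$ is block lower triangular with diagonal blocks $1$ and $\eps s^q\,I_{d-1}$, so $|\det DF_\eps(s,t)| = \eps^{d-1} s^{q(d-1)}$. The change of variables then rewrites $\int_{V_{\eps,I}}|\nabla v|^2\,\dd\H^d$ exactly as
\[
\eps^{d-1}\int_I s^{q(d-1)}\int_\omega \bigg[\Big(\partial_s u - \tfrac{q}{s}\,t\cdot\nabla_t u\Big)^{\!2} + \tfrac{1}{\eps^2 s^{2q}}\,|\nabla_t u|^2\bigg]\,\dd\H^{d-1}(t)\,\dd s,
\]
so that the lemma reduces to a pointwise inequality for the bracketed expression.

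To finish, I would expand the square,
\[
\Big(\partial_s u - \tfrac{q}{s}\,t\cdot\nabla_t u\Big)^{\!2} = |\partial_s u|^2 - \tfrac{2q}{s}\,\partial_s u\,(t\cdot\nabla_t u) + \tfrac{q^2}{s^2}(t\cdot\nabla_t u)^2,
\]
and control the cross term by writing $t\cdot\nabla_t u = \sum_{j=1}^{d-1} t_j\partial_{t_j} u$ and applying Young's inequality $2|ab|\le \delta a^2 + \delta^{-1}b^2$ to each summand with weight $\delta := \eps q R_\omega$; using $|t_j|\le R_\omega$ this yields
\[
\Big|\tfrac{2q}{s}\,\partial_s u\,(t\cdot\nabla_t u)\Big| \le (d-1)\,\eps q R_\omega\,|\partial_s u|^2 + \tfrac{q R_\omega}{\eps s^2}\,|\nabla_t u|^2.
\]
For the remaining positive quadratic term, the componentwise Cauchy--Schwarz bound $(t\cdot\nabla_t u)^2 \le (d-1)R_\omega^2\,|\nabla_t u|^2$ gives the upper estimate of the lemma; for the lower estimate one simply discards this nonnegative term, the slack $-(d-1)q^2 R_\omega^2/s^2\cdot|\nabla_t u|^2$ appearing on the right-hand side of the statement being automatic since it is negative.

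The only delicate point is the bookkeeping of constants: the specific choice $\delta = \eps q R_\omega$ is precisely what balances the two sides so that the coefficient of $|\partial_s u|^2$ acquires the correction $(d-1)\eps q R_\omega$ and the coefficient of $|\nabla_t u|^2$ acquires $q R_\omega/(\eps s^2)$, while the factor $(d-1)$ is produced by summing the $d-1$ componentwise applications of Young's inequality. No further nontrivial estimate is needed, and the lower and upper bounds of the lemma follow at once.
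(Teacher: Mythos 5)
Your proposal is correct and follows essentially the same route as the paper: change of variables via $F_\eps$ with Jacobian $\eps^{d-1}s^{q(d-1)}$, the same pointwise identity for $|\nabla v|^2\circ F_\eps$ (you obtain it from the chain rule as a complete square plus the transverse term, the paper equivalently via the inverse metric matrix $(DF_\eps^TDF_\eps)^{-1}$), and the same Young/Cauchy--Schwarz estimates producing the coefficients $(d-1)\eps qR_\omega$, $qR_\omega/(\eps s^2)$ and $(d-1)q^2R_\omega^2/s^2$. The constants and the treatment of the lower bound (discarding the nonnegative quadratic term) all check out.
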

\begin{proof}
Remark that
\begin{gather*}
DF_\eps(s,t)=\begin{pmatrix}
	1 & 0\\
	\eps q s^{q-1} t & \eps s^q I_{d-1}
\end{pmatrix},
\quad 
t=\begin{pmatrix}
	t_1\\
	\dots\\
	t_{d-1}
\end{pmatrix},\\
\quad
I_{d-1}:=\text{the $(d-1)\times (d-1)$ identity matrix.}
\end{gather*}
The change of variables $x=F_\eps(s,t)$ gives
\[
		\int_{V_{\eps,I}}\big|\nabla v(x)\big|^2\dd\H^d(x)=\int_I\int_{\omega}\big\langle\nabla u,G_\eps\nabla u\big\rangle_{\R^d} \,g_\eps \,\dd\H^{d-1}(t)\,\dd s,
\]
where
\begin{align*}
g_\eps(s,t)&:=\Big|\det DF_\eps(s,z)\Big|\equiv \eps^{d-1}s^{q(d-1)},\\
G_\eps(s,t)&:=\Big(DF_\eps(s,t)^{T}DF_\eps(s,t)\Big)^{-1}\\
&=
		\begin{pmatrix}
			1+\eps^2q^2\,s^{2(q-1)}|t|^2 & \eps^2 q s^{2q-1}t^T\\
			\eps^2 qs^{2q-1}t & \eps^2 s^{2q}I_{d-1}
		\end{pmatrix}^{-1}
=
		\begin{pmatrix}
			1 & -\dfrac{q}{s}t^T\\
			\\
			-\dfrac{q}{s}t  & \dfrac{1}{\eps^2s^{2q}}I_{d-1}+ \dfrac{q^2}{s^2}\, t t^T
		\end{pmatrix}.
\end{align*}
It follows that
	\begin{equation}\label{est of inner product}
		\begin{aligned}
		\langle\nabla u,G_\eps\nabla u\rangle_{\R^d}&=|\partial_su|^2+\dfrac{1}{\eps^2s^{2q}}|\nabla_tu|^2\\
		&\quad-\frac{2q}{s}\sum_{j=1}^{d-1}t_j\partial_su\,\partial_{t_j}u+\frac{q^2}{s^2}\sum_{j,\,k=1}^{d-1}t_jt_k\partial_{t_j}u\,\partial_{t_k}u.
		\end{aligned}
	\end{equation}
We estimate	
\begin{equation}\label{est of inner product 1term}
\begin{aligned}
\left|\frac{2q}{s}\sum_{j=1}^{d-1}t_j\partial_su\,\partial_{t_j}u\right|&
\le q\sum_{j=1}^{d-1}|t_j|\,2\Big|\partial_su\,\dfrac{\partial_{t_j}}{s}u\Big|
\le qR_\omega \sum_{j=1}^{d-1} 2\Big|\partial_su\,\dfrac{\partial_{t_j}u}{s}\Big|\\
&			\leq q R_\omega\sum_{j=1}^{d-1}\bigg[\eps |\partial_su|^2 + \dfrac{1}{\eps}\Big|\dfrac{\partial_{t_j}u}{s}\Big|^2\bigg]
			=(d-1)q R_\omega \eps |\partial_su|^2+\frac{qR_\omega}{\eps s^2}|\nabla_tu|^2.
		\end{aligned}
\end{equation}
Similarly, we have
	\begin{equation}\label{est of inner product 2term}
		\begin{aligned}
		\left|\frac{q^2}{s^2}\sum_{j,\,k=1}^{d-1}t_jt_k\partial_{t_j}u\,\partial_{t_k}u\right|
		&\le\frac{q^2}{s^2} \sum_{j,\,k=1}^{d-1}|t_j|\, |t_k|\, |\partial_{t_j}u|\,|\partial_{t_k}u|\\
		&\le \frac{q^2R_\omega^2}{s^2} \sum_{j,\,k=1}^{d-1}\, |\partial_{t_j}u|\,|\partial_{t_k}u|\\
		&\le \frac{q^2R_\omega^2}{2s^2} \sum_{j,\,k=1}^{d-1}\, \Big(|\partial_{t_j}u|^2+|\partial_{t_k}u|^2\Big)\\
		&=\frac{q^2R^2_\omega}{s^2}(d-1)|\nabla_tu|^2.
		\end{aligned}
	\end{equation}
By using \eqref{est of inner product 1term} and \eqref{est of inner product 2term} to estimate
the two last summands on the right-hand side of \eqref{est of inner product} we obtain the desired inequality.
\end{proof}

Note that the above computations are classical for smooth $\partial\omega$, but the formulas are still
valid for Lipschitz $\omega$ due to general results on Lipschitz manifolds \cite{rosen}.

\subsection{A lower bound outside the peak}\label{nopeak} The goal of this subsection is to obtain a lower bound
for the eigenvalues of $T^{1,N}_{\eps,I}$ for intervals $I$ separated from $0$: It will be used in the truncation arguments in the next sections.

\begin{lemma}\label{lem8}
Let $b,B>0$ and $\eps_0>0$. Then there exists a constant $c>0$ such that for any $\eps\in(0,\eps_0)$
and any non-empty open interval
\[
I_\eps\subset \big(b,B\eps^{-\frac{1}{q-1}}\big)
\]
it holds $\lambda_1(T^{1,N}_{\eps,I_\eps})\ge -c\eps^{-1}$.
\end{lemma}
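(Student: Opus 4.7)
The plan is to apply the cylindrical reduction via $F_\eps$ and reinterpret $t^{1,N}_{\eps,I_\eps}$ as an $s$-integrated family of slice-wise Robin forms on $\omega$, then bound each slice using Lemma~\ref{basic properties}(iv). The explicit form of $G_\eps$ computed in the proof of Lemma~\ref{two side est of gradient} yields the exact identity
\[
\langle\nabla u,G_\eps\nabla u\rangle=\bigl(\partial_s u-\tfrac{q}{s}\langle t,\nabla_t u\rangle\bigr)^2+\tfrac{1}{\eps^2 s^{2q}}|\nabla_t u|^2,\qquad u:=v\circ F_\eps.
\]
Discarding the first non-negative summand and multiplying by $g_\eps=\eps^{d-1}s^{q(d-1)}$, integration over $\Pi_{I_\eps}$ gives
\[
\int_{V_{\eps,I_\eps}}|\nabla v|^2\,\dd\H^d\ \geq\ \eps^{d-3}\int_{I_\eps} s^{q(d-3)}\int_\omega |\nabla_t u|^2\,\dd\H^{d-1}(t)\,\dd s.
\]

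For the boundary term, Lemma~\ref{two side est of bdy integral} applied to $v^2$, combined with the bound $\eps s^{q-1}\leq B$ on $I_\eps$, yields
\[
\int_{\partial_0 V_{\eps,I_\eps}} v^2\,\dd\H^{d-1}\ \leq\ C_1\eps^{d-2}\int_{I_\eps} s^{q(d-2)}\int_{\partial\omega}u(s,\cdot)^2\,\dd\H^{d-2}\,\dd s,\qquad C_1:=\sqrt{1+B^2q^2R_\omega^2}.
\]
Pulling the common factor $\eps^{d-3}s^{q(d-3)}$ out of both contributions recognizes the integrand in $s$ as the slice-wise Robin form on $\omega$:
\[
t^{1,N}_{\eps,I_\eps}(v,v)\ \geq\ \eps^{d-3}\int_{I_\eps} s^{q(d-3)}\,r^\omega_{\alpha(s)}\bigl(u(s,\cdot),u(s,\cdot)\bigr)\,\dd s,\qquad \alpha(s):=C_1\eps s^q.
\]

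By the min-max principle applied on each slice, $r^\omega_{\alpha(s)}(u(s,\cdot),u(s,\cdot))\geq \lambda_1(R^\omega_{\alpha(s)})F(s)$ where $F(s):=\int_\omega u(s,\cdot)^2\,\dd\H^{d-1}(t)$; Lemma~\ref{basic properties}(iv) with $m=d-1$, $U=\omega$ provides $\lambda_1(R^\omega_\alpha)\geq -A_\omega\alpha-M\alpha^2$ for all $\alpha>0$ with $M:=\|\phi\|_\infty$. Substituting $\alpha=\alpha(s)$ and using $s\geq b$ together with the volume identity $\|v\|^2_{L^2(V_{\eps,I_\eps})}=\eps^{d-1}\int s^{q(d-1)}F\,\dd s$, the linear-in-$\alpha$ contribution becomes
\[
-A_\omega C_1\eps^{d-2}\int_{I_\eps} s^{q(d-2)}F(s)\,\dd s\ \geq\ -\frac{A_\omega C_1 b^{-q}}{\eps}\,\|v\|^2_{L^2(V_{\eps,I_\eps})},
\]
while the quadratic remainder equals $-MC_1^2\|v\|^2_{L^2(V_{\eps,I_\eps})}$, bounded below by $-MC_1^2\eps_0\eps^{-1}\|v\|^2_{L^2(V_{\eps,I_\eps})}$ since $\eps\leq\eps_0$. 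Setting $c:=A_\omega C_1 b^{-q}+MC_1^2\eps_0$ and invoking the min-max principle completes the proof.

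The main delicacy is that $\alpha(s)=C_1\eps s^q$ is unbounded on $I_\eps$, of order $\eps^{-1/(q-1)}$ at the right endpoint, so no small-$\alpha$ asymptotic for $\lambda_1(R^\omega_\alpha)$ would suffice. It is precisely the global version of Lemma~\ref{basic properties}(iv) with $\phi\in L^\infty(0,\infty)$ that accommodates this unboundedness and keeps the quadratic-in-$\alpha$ error term under control.
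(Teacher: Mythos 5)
Your proof is correct and follows essentially the same route as the paper's: both reduce $t^{1,N}_{\eps,I_\eps}$ to an $s$-integrated family of slice-wise Robin forms on the cross-section and then invoke Lemma~\ref{basic properties}(iv) (whose global $L^\infty$ control of the quadratic remainder is indeed the key point, since the effective Robin parameter is unbounded on $I_\eps$) together with $s\ge b$ and $\eps\le\eps_0$; you merely pass to cylindrical coordinates first, whereas the paper stays in the original variables and uses the scaling property (i) on $\eps x_1^q\omega$, which is equivalent. The only blemish is a constant: on $I_\eps$ one has $\eps s^{q-1}\le B^{q-1}$ rather than $\le B$, so $C_1$ should read $\sqrt{1+B^{2(q-1)}q^2R_\omega^2}$ --- immaterial for the conclusion.
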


\begin{proof}
For any $u\in\cD(T^{1,N}_{\eps,I_\eps})$ we have by Fubini's theorem
\begin{align*}
	\int_{V_{\eps,I_\eps}} |\nabla u|^2\dd \H^d&\ge \int_{V_{\eps,I_\eps}} |\nabla_{x'} u|^2\dd \H^d=\int_{I_\eps} \int_{\eps x_1^q \omega} \big|\nabla_{x'}u(x_1,x')\big|^2\dd \H^{d-1}(x')\,\dd x_1.
\end{align*}	
In addition, due to Lemma \ref{two side est of bdy integral},	
\begin{align*}
	\int_{\partial_0 V_{\eps,I_\eps}}u^2\dd\H^{d-1}
	&\le \eps^{d-2}\int_{I_\eps}
	x_1^{q(d-2)}\sqrt{1+\eps^2q^2\,R_\omega^2\,x_1^{2q-2}}\int_{\partial\omega}u(x_1,\eps x_1^q x')^2\,\dd\H^{d-2}(x')\,\dd x_1\\
	&\le \int_{I_\eps}\sqrt{1+\eps^2q^2\,R_\omega^2\,x_1^{2q-2}}
	\int_{\partial(\eps x_1^q\omega)}u(x_1,x')^2\,\dd\H^{d-2}(x')\,\dd x_1.
\end{align*}
Therefore,
\begin{align*}
	t^{1,N}_{\eps,I_\eps}(u,u)&=\int_{V_{\eps,I_\eps}} |\nabla u|^2\dd \H^d-\int_{\partial_0 V_{\eps,I_\eps}}u^2\dd\H^{d-1}\\
	&\ge \int_{I_\eps}\Big[ \int_{\eps x_1^q \omega} \big|\nabla_{x'}u(x_1,x')\big|^2\dd\,\H^{d-1}(x')\\
	&\qquad-
	\sqrt{1+\eps^2q^2\,R_\omega^2\,x_1^{2q-2}}
	\int_{\partial(\eps x_1^q\omega)}u(x_1,x')^2\,\dd\H^{d-2}(x')\Big]\,\dd x_1\\
	&=\int_{I_\eps} r^{\eps x_1^q \omega}_{\sqrt{1+\eps^2q^2\,R_\omega^2\,x_1^{2q-2}}}\big(u(x_1,\cdot),u(x_1,\cdot)\big)\,\dd x_1\\
	&\ge \int_{I_\eps} \lambda_1\Big(R^{\eps x_1^q \omega}_{\sqrt{1+\eps^2q^2\,R_\omega^2\,x_1^{2q-2}}}\Big) \int_{\eps x_1^q \omega}u(x_1,x')^2\,\dd\H^{d-1}(x')\,\dd x_1\\
	&\ge \Lambda_\eps \int_{V_{\eps,I_\eps}}u^2\dd\H^d,\qquad
\Lambda_\eps:=\inf_{x_1\in I_\eps} \lambda_1\left(R^{\eps x_1^q \omega}_{\sqrt{1+\eps^2q^2\,R_\omega^2\,x_1^{2q-2}}}\right),
\end{align*}
i.e. $\lambda_1(T^{1,N}_{\eps,I_\eps})\ge \Lambda_\eps$ for all $\eps\in(0,\eps_0)$,
and it remains to find a suitable lower bound for $\Lambda_\eps$.
With the help of the assertions (i) and (iv) of Lemma \ref{basic properties} we obtain, with some $C'>0$,
\begin{align*}
\lambda_1\left(R^{\eps x_1^q \omega}_{\sqrt{1+\eps^2q^2\,R_\omega^2\,x_1^{2q-2}}}\right)&=\dfrac{\lambda_1\left(R^{\omega}_{\eps x_1^q\sqrt{1+\eps^2q^2\,R_\omega^2\,x_1^{2q-2}}}\right)}{\eps^2x_1^{2q}}\\
&\ge -\dfrac{
	\eps A_\omega  x_1^q\sqrt{1+\eps^2q^2\,R_\omega^2\,x_1^{2q-2}}
	+C' \eps^2 x_1^{2q} \big(1+\eps^2q^2\,R_\omega^2\,x_1^{2q-2}\big)
	}{\eps^2 x_1^{2q}}\\
	&= -A_\omega \dfrac{\sqrt{1+\eps^2q^2\,R_\omega^2\,x_1^{2q-2}}}{\eps x_1^q}
	-C' \big(1+\eps^2q^2\,R_\omega^2\,x_1^{2q-2}\big).
\end{align*}

For all $x_1\in I_\eps$ we have
\[
 x_1^q>b^q,\qquad
\eps x_1^{q-1}< \eps \big(B \eps^{-\frac{1}{q-1}}\big)^{q-1}=B^{q-1},
\]
therefore, for any $\eps\in(0,\eps_0)$ one has
\begin{align*}
\Lambda_\eps &
\ge -\dfrac{A_\omega\sqrt{1+q^2R_\omega B^{2q-2}}}{b^q}\cdot\dfrac{1}{\eps}-C'(1+q^2R_\omega^2 B^{2q-2})\\
&\ge -\bigg[ \dfrac{A_\omega\sqrt{1+q^2R_\omega B^{2q-2}}}{b^q} + C'(1+q^2R_\omega^2 B^{2q-2}) \eps_0\bigg]\cdot\dfrac{1}{\eps}. \qedhere
\end{align*}
\end{proof}

\section{Spectral asymptotics near the peak}\label{peak}

\subsection{Model operators near the peak}
We are going to apply a series of coordinate changes, which will allow for the spectral study of the operators
\begin{equation}
	\label{tea}
T_{\eps,a}:=T^{1,D}_{\eps,(0,a)}
\end{equation}
as $\eps\to 0^+$. We also denote
\[
t_{\eps,a}:=t^{1,D}_{\eps,(0,a)}, \qquad \Pi_a:=\Pi_{(0,a)}\equiv (0,a)\times\omega.
\]

The diffeomorphism $F_\eps$ defined in Subsection \ref{ss-cyl} induces the unitary transformation
\begin{gather*}
	\Phi_\eps:L^2(V_{\eps,(0,a)})\to L^2\big(\Pi_a,\eps^{d-1}s^{(d-1)q}\dd s \,\dd\H^{d-1}(t)\big),\\
	\qquad  \big(\Phi_\eps u\big)(s,t):=u\big(F_\eps(s,t)\big). 
\end{gather*}
Consider the symmetric bilinear form $p_{\eps,a}$ in $L^2\big(\Pi_a,\eps^{d-1}s^{(d-1)q}\dd s \,\dd\H^{d-1}(t)\big)$ defined by
\[
p_{\eps,a}(u,u):=t_{\eps,a}(\Phi_\eps^{-1} u,\Phi_\eps^{-1} u),
\]
on the domain $\cD(p_{\eps,a})=\Phi_\eps\big(\cD(t_{\eps,a})\big)$
and the associated self-adjoint operator $P_{\eps,a}$ in the weighted space $L^2\big(\Pi_a,\eps^{d-1}s^{(d-1)q}\dd s \,\dd\H^{d-1}(t)\big)$, which is by construction unitary
equivalent to $T_{\eps,a}$ and, hence, has the same eigenvalues.
Remark that due to the explicit form of $F_\eps$ and $\Phi_\eps$ we have
$\Phi_\eps\big(W^{1,2}_I(V_{\eps,I})\big)=W^{1,2}_I(\Pi_I)$.
It will be convenient to denote
\[
\cD_a:=W^{1,2}_{(0,a)}(\Pi_a),
\]
then \eqref{eq-core} shows that
\begin{equation*}
	\cD_a\text{ is a core domain of }p_{\eps,a},
	\label{eq-core2}
\end{equation*}
so $\cD_a$ can be used as a test domain when applying the min-max principle to~$P_{\eps,a}$.

Due to Lemma~\ref{two side est of bdy integral} and Lemma~\ref{two side est of gradient} for any $u\in\cD_a$ one has
\[
p^-_{\eps,a}(u,u)\le p_{\eps,a}(u,u)\le p^+_{\eps,a}(u,u)
\]
with symmetric bilinear forms $p^\pm_{\eps,a}(u,u)$ in $L^2\big(\Pi_a,\eps^{d-1}s^{(d-1)q}\dd s \,\dd\H^{d-1}(t)\big)$ defined on $\cD_a$
\begin{align*}
	p^-_{\eps,a}(u,u)&:=\eps^{d-1}\int_0^a s^{q(d-1)}\int_{\omega}\Bigg[\big(1-(d-1)\eps qR_\omega\big)|\partial_su|^2\\
	&\quad +\left(\dfrac{1}{\eps^2s^{2q}}-\frac{qR_\omega}{s^2\eps}-(d-1)\frac{q^2R^2_\omega}{s^2}\right)|\nabla_tu|^2\Bigg]\,\dd\H^{d-1}(t)\,\dd s\\
	&\quad- \eps^{d-2}\int_0^a
	s^{q(d-2)}\sqrt{1+\eps^2q^2\,R_\omega^2\,s^{2q-2}}\int_{\partial\omega}\big|u(s,t)\big|^2\,\dd\H^{d-2}(t)\,\dd s,\\
	p^+_{\eps,a}(u,u)&:=\eps^{d-1}\int_0^a s^{q(d-1)}\int_{\omega}\Bigg[\big(1+(d-1)\eps qR_\omega\big)|\partial_su|^2\\
	&\quad +\left(\dfrac{1}{\eps^2s^{2q}}+\frac{qR_\omega}{s^2\eps}+(d-1)\frac{q^2R^2_\omega}{s^2}\right)|\nabla_tu|^2\Bigg]\,\dd\H^{d-1}(t)\,\dd s\\
	&\quad -\eps^{d-2}\int_0^a
	s^{q(d-2)}\int_{\partial\omega}\big|u(s,t)\big|^2\,\dd\H^{d-2}(t)\,\dd s,
\end{align*}

In order to deal with $L^2$-spaces without weights we additionally consider the unitary
transform
\begin{gather*}
\cV_\eps:\ L^2(\Pi_a)\to L^2(\Pi_a,\eps^{d-1}s^{q(d-1)}\dd s\,\dd\H^{d-1}(t)\big),\\
\cV_\eps u(s,t):=\eps^{-\frac{d-1}{2}}s^{-\frac{q(d-1)}{2}}u(s,t),
\end{gather*}
and the symmetric bilinear forms $\Tilde p^{\,\pm}_{\eps,a}$ in $L^2(\Pi_a)$ defined
on $\cV_\eps^{-1} \cD_a \equiv \cD_a$
by
\[
\Tilde p^{\,\pm}_{\eps,a}(u,u):=p^\pm_{\eps,a}(\cV_\eps u,\cV_\eps u),
\]
i.e. for $u\in  \cD_a$ one has
\begin{align*}
\Tilde p^{\,-}_{\eps,a}(u,u)&=\int_0^a \int_{\omega}\Bigg[\big(1-(d-1)\eps qR_\omega\big)\Big(\partial_su -\frac{q(d-1)}{2s}\, u\Big)^2\\
&\qquad\qquad +\left(\dfrac{1}{\eps^2s^{2q}}-\frac{qR_\omega}{s^2\eps}-(d-1)\frac{q^2R^2_\omega}{s^2}\right)|\nabla_tu|^2\Bigg]\,\dd\H^{d-1}(t)\,\dd s\\
&\quad- \int_0^a \dfrac{1}{\eps s^q}
\sqrt{1+\eps^2q^2\,R_\omega^2\,s^{2q-2}}\int_{\partial\omega}\big|u(s,t)\big|\,\dd\H^{d-2}(t)\,\dd s,\\
\Tilde p^{\,+}_{\eps,a}(u,u)&=\int_0^a\int_{\omega}\Bigg[\big(1+(d-1)\eps qR_\omega\big)\Big(\partial_su -\frac{q(d-1)}{2s}\, u\Big)^2
	\\ &\qquad\qquad +\left( \dfrac{1}{\eps^2s^{2q}}+\frac{qR_\omega}{\eps s^2}+(d-1)\frac{q^2R^2_\omega}{s^2}\right)|\nabla_tu|^2\Bigg]\dd\H^{d-1}(t)\dd s\\
	&\quad -\int_0^a\dfrac{1}{\eps s^q} \int_{\partial\omega} u^2\,\dd\H^{d-2}(t)\,\dd s.
\end{align*}
Assuming that
\[
\eps\in(0,\eps_0)
\]
with some sufficiently small $\eps_0>0$ we can find suitable constants $c_j>0$,  $j\in\{1,2\}$, with $c_j\eps_0<1$
such that for all $s\in(0,a)$ one has
\[
\frac{qR_\omega}{\eps s^2}+(d-1)\frac{q^2R^2_\omega}{s^2}\le c_1\eps\cdot \dfrac{1}{\eps^2 s^{2q}},
\]
which yields,
\begin{align*}
	\dfrac{1}{\eps^2s^{2q}}+\frac{qR_\omega}{\eps s^2}+(d-1)\frac{q^2R^2_\omega}{s^2}&\le \dfrac{1+c_1\eps}{\eps^2 s^{2q}},\\
	\dfrac{1}{\eps^2s^{2q}}-\frac{qR_\omega}{\eps s^2}-(d-1)\frac{q^2R^2_\omega}{s^2}&\ge \dfrac{1-c_1\eps}{\eps^2 s^{2q}},
\end{align*}
and
\[
\sqrt{1+\eps^2q^2\,R_\omega^2\,s^{2q-2}}\le \dfrac{1}{1-c_2\eps}.
\]
This gives, with $c_0:=(d-1)qR_\omega$,
\begin{align*}
\Tilde p^{\,-}_{\eps,a}(u,u)&\ge (1-c_0\eps)\int_0^a \int_{\omega}\Big(\partial_su -\frac{q(d-1)}{2s}\, u\Big)^2\dd\H^{d-1}(t)\,\dd s\\
	&\quad +(1-c_1\eps)\int_0^a \dfrac{1}{\eps^2s^{2q}}\int_{\omega}|\nabla_tu|^2\,\dd\H^{d-1}(t)\,\dd s\\
	&\quad- \dfrac{1}{1-c_2\eps}\int_0^a \dfrac{1}{\eps s^q}
	\int_{\partial\omega}u^2\,\dd\H^{d-2}(t)\,\dd s,\\
\Tilde p^{\,+}_{\eps,a}(u,u)&\le (1+c_0\eps)\int_0^a \int_{\omega}\Big(\partial_su -\frac{q(d-1)}{2s}\, u\Big)^2\dd\H^{d-1}(t)\,\dd s\\
	&\quad +(1+c_1\eps)\int_0^a \dfrac{1}{\eps^2s^{2q}}\int_{\omega}|\nabla_tu|^2\,\dd\H^{d-1}(t)\,\dd s-\int_0^a \dfrac{1}{\eps s^q}
	\int_{\partial\omega}u^2\,\dd\H^{d-2}(t)\,\dd s.
\end{align*}

Due to $u\in \cD_a$ the integration by parts in $s$ gives
\[
\int_{0}^a\frac{u\,\partial_su}{s}\dd s=\int_0^a\frac{u^2}{2s^2}\dd s,
\]
resulting in
\begin{align}
\int_0^a \int_{\omega}\Big(\partial_su &-\frac{q(d-1)}{2s}\, u\Big)^2\dd\H^{d-1}(t)\,\dd s=\int_0^a \int_{\omega}\bigg(|\partial_su|^2+\dfrac{H}{s^2}\,u^2\bigg)\dd\H^{d-1}(t)\,\dd s,\nonumber\\
H:&=\dfrac{q^2(d-1)^2-2q(d-1)}{4}\equiv \dfrac{\big(q(d-1)-1\big)^2-1}{4}.\label{hhh}
\end{align}

By taking
\[
c:=\max\{c_0,c_1,c_2\}
\]
and by adjusting the value of $\eps_0$ to have $c\eps_0<1$ we arrive at
the inequalities
\[
h^-_{\eps,a}(u,u)\le \Tilde p^{\,-}_{\eps,a}(u,u), \qquad
\Tilde p^{\,+}_{\eps,a}(u,u)\le h^{\,+}_{\eps,a}(u,u)
\]
valid for all $\eps\in(0,\eps_0)$ and all $u\in\cD_a$, where
the symmetric bilinear forms $h^\pm_{\eps,a}$ in $L^2(\Pi_a)$ are defined on $\cD_a$ by
\begin{align*}
h^{-}_{\eps,a}(u,u)&= (1-c\eps)\int_0^a \int_{\omega}\bigg(|\partial_su|^2+\dfrac{H}{s^2}\,u^2\bigg)\dd\H^{d-1}(t)\,\dd s\\
&\quad +(1-c\eps)\int_0^a \dfrac{1}{\eps^2s^{2q}}\int_{\omega}|\nabla_tu|^2\,\dd\H^{d-1}(t)\,\dd s\\
&\quad- \dfrac{1}{1-c\eps}\int_0^a \dfrac{1}{\eps s^q}
\int_{\partial\omega}u^2\,\dd\H^{d-2}(t)\,\dd s,\\
h^{+}_{\eps,a}(u,u)&= (1+c\eps)\int_0^a \int_{\omega}\bigg(|\partial_su|^2+\dfrac{H}{s^2}\,u^2\bigg)\dd\H^{d-1}(t)\,\dd s\\
&\quad +(1+c\eps)\int_0^a \dfrac{1}{\eps^2s^{2q}}\int_{\omega}|\nabla_tu|^2\,\dd\H^{d-1}(t)\,\dd s-\int_0^a \dfrac{1}{\eps s^q}
\int_{\partial\omega}u^2\,\dd\H^{d-2}(t)\,\dd s.
\end{align*}

Using the min-max principle one summarizes the above considerations as follows:
\begin{lemma}\label{lem35}
	For any $a>0$ there exist $\eps_0>0$ and $c>0$, with $c \eps_0<1$, such that for any $j\in\N$ and any $\eps\in(0,\eps_0)$ it holds that
	\[
	\mu^-_j(\eps,a)\le \lambda_j(T_{\eps,a})\le \mu^+_j(\eps,a)
	\quad
	\text{with}
	\quad
	\mu^\pm_j(\eps,a):=\inf_{\substack{S\subset\cD_a\\ \dim S=j}}\sup_{u\in S\setminus\{0\}}\dfrac{h^\pm_{\eps,a}(u,u)}{\|u\|^2_{L^2(\Pi_a)}}.
	\]
\end{lemma}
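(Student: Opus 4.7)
The plan is to chain together the machinery already built up earlier in the subsection and finish with a direct invocation of the min-max principle, so the lemma will reduce to recording what has been done.

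First I would observe that $T_{\eps,a}$ is unitarily equivalent, via the composition of the unitaries $\Phi_\eps$ and $\cV_\eps$ introduced above, to the self-adjoint operator in $L^2(\Pi_a)$ generated by the symmetric bilinear form
\[
\tilde p_{\eps,a}(u,u):=p_{\eps,a}(\cV_\eps u,\cV_\eps u)=t_{\eps,a}\big(\Phi_\eps^{-1}\cV_\eps u,\Phi_\eps^{-1}\cV_\eps u\big),
\]
with domain $\cV_\eps^{-1}\cD(p_{\eps,a})=\cV_\eps^{-1}\Phi_\eps\,\cD(t_{\eps,a})$. In particular, $\lambda_j(T_{\eps,a})$ coincides with the $j$-th eigenvalue (counted with multiplicities) of this transformed operator on $L^2(\Pi_a)$.

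Next I would transfer the core property to the new setting. The text has established that $\cD_a$ is a core for $p_{\eps,a}$, and the explicit formula $\cV_\eps u(s,t)=\eps^{-(d-1)/2}s^{-q(d-1)/2}u(s,t)$ shows that $\cV_\eps$ maps $\cD_a$ bijectively onto itself (functions in $\cD_a$ vanish near $s=0$ and $s=a$, and the multiplier $s^{-q(d-1)/2}$ is smooth and bounded on the supports involved). Consequently $\cD_a$ is a core for $\tilde p_{\eps,a}$ as well, and the min-max principle therefore gives
\[
\lambda_j(T_{\eps,a})=\inf_{\substack{S\subset\cD_a\\ \dim S=j}}\sup_{u\in S\setminus\{0\}}\dfrac{\tilde p_{\eps,a}(u,u)}{\|u\|^2_{L^2(\Pi_a)}}.
\]

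Finally I would assemble the form inequalities already derived in the discussion: choosing $\eps_0>0$ and $c>0$ as in the preceding paragraphs so that all the pointwise bounds on coefficients are valid for $\eps\in(0,\eps_0)$, one has on $\cD_a$ the chain
\[
h^-_{\eps,a}(u,u)\le \tilde p^{\,-}_{\eps,a}(u,u)\le \tilde p_{\eps,a}(u,u)\le \tilde p^{\,+}_{\eps,a}(u,u)\le h^{+}_{\eps,a}(u,u),
\]
where the middle inequality is Lemma~\ref{two side est of bdy integral} and Lemma~\ref{two side est of gradient} conjugated by $\cV_\eps$, and the outer inequalities are the coefficient estimates performed just before the lemma. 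Taking the supremum over $u\in S\setminus\{0\}$ and then the infimum over $j$-dimensional subspaces $S\subset\cD_a$ in this chain yields $\mu^-_j(\eps,a)\le\lambda_j(T_{\eps,a})\le\mu^+_j(\eps,a)$, which is the claim. The only subtlety, and the one requiring care rather than difficulty, is to confirm that the unitary $\cV_\eps$ preserves $\cD_a$ so that the min-max characterization can indeed be written over subspaces of $\cD_a$; everything else is a direct transcription of what has already been proved.
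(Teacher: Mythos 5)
Your proposal is correct and follows essentially the same route as the paper, which itself treats this lemma as a summary of the preceding computations: unitary equivalence via $\Phi_\eps$ and $\cV_\eps$, the fact that $\cV_\eps$ preserves the core $\cD_a$, the chain $h^-_{\eps,a}\le\Tilde p^{\,-}_{\eps,a}\le p_{\eps,a}(\cV_\eps\cdot,\cV_\eps\cdot)\le\Tilde p^{\,+}_{\eps,a}\le h^+_{\eps,a}$, and the min-max principle applied over subspaces of the core. Nothing is missing.
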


\subsection{Upper bound}
In this subsection we are going to obtain an upper bound for the quantities $\mu^+_j(\eps,a)$
defined in Lemma~\ref{lem35}. The analysis is based on the observation that for any
$u\in \cD_a$ one has
\begin{align*}
	&h^{+}_{\eps,a}(u,u)= (1+c\eps)\int_0^a \int_{\omega}\bigg(|\partial_su|^2+\dfrac{H}{s^2}\,u^2\bigg)\dd\H^{d-1}(t)\,\dd s\\
	&\quad +(1+c\eps)\int_0^a \dfrac{1}{\eps^2s^{2q}}\bigg[\int_{\omega}|\nabla_tu|^2\,\dd\H^{d-1}(t) -\dfrac{\eps s^q}{1+c\eps}
	\int_{\partial\omega}u^2\,\dd\H^{d-2}(t)\bigg]\,\dd s.
\end{align*}
In other words, if we denote
\[
\rho_\eps(s):=\dfrac{s^q}{1+c\eps},
\]
then
\begin{equation}
	\label{hplus1}	
\begin{aligned}
h^{+}_{\eps,a}(u,u)&= (1+c\eps)\int_0^a \int_{\omega}\bigg(|\partial_su|^2+\dfrac{H}{s^2}\,u^2\bigg)\dd\H^{d-1}(t)\,\dd s\\
&\quad
+(1+c\eps)\int_0^a \dfrac{1}{\eps^2s^{2q}} r^\omega_{\eps\rho_\eps(s)}\big(u(s,\cdot),u(s,\cdot)\big)\dd s.
\end{aligned}
\end{equation}

Let $\psi_\alpha$ be the positive and $L^2$-normalized eigenfunction of the Robin Laplacian $R^\omega_\alpha$ for the first eigenvalue $\lambda_1(R^\omega_\alpha)$. In view of the above representation one is interested in $\psi_{\eps \rho_\eps(s)}$. By the assertion (iii) of Lemma \ref{basic properties}, for any $\eps>0$ the map
\[
(0,a)\ni s\mapsto \psi_{\eps\rho_\eps(s)}\in L^2(\omega)
\]
is smooth. For any function $f\in W^{1,2}_{(0,a)}(0,a)$ consider the associated function
\begin{equation}
	\label{eq-fu}
u:\ \Pi_a\ni(s,t)\mapsto f(s)\psi_{\eps \rho_\eps(s)}(t).
\end{equation}
Obviously $u$ belongs to $L^2(\Pi_a)$, its weak derivatives in $\Pi_a$ are given by
\begin{align*}
\partial_s u:&\ (s,t)\mapsto f'(s)\psi_{\eps \rho_\eps(s)}(t)+f(s)\dfrac{\dd \psi_{\eps\rho_\eps(s)}}{\dd s}(t),\\
\nabla_t u:&\ (s,t)\mapsto f(s)\nabla_t \psi_{\eps \rho_\eps(s)}(t),
\end{align*}
and, therefore, also belongs to $L^2(\Pi_a)$, which shows that $u\in \cD_a$.

\begin{lemma}\label{lem41}
	For any $a>0$ and $\eps_0>0$ there exists $B>0$ such that for any
	$f\in W^{1,2}_{(0,a)}(0,a)$, any $\eps\in(0,\eps_0)$ and for $u$ given by \eqref{eq-fu} it holds
\begin{align*}
	\int_{\Pi_a}u^2\dd\H^d&=\int_0^a f(s)^2\dd s,\\	
	h^+_{\eps,a}(u,u)&\le (1+c\eps)\int_0^a \bigg[f'(s)^2 +\bigg(\dfrac{H}{s^2}-\dfrac{A_\omega}{\eps(1+c\eps)s^q}+B\bigg)f(s)^2\bigg]\dd s. \label{hplus3}
\end{align*}
\end{lemma}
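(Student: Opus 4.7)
The $L^2$-identity is immediate: by $\|\psi_\alpha\|_{L^2(\omega)}=1$ for every $\alpha\ge 0$, Fubini gives $\int_{\Pi_a}u^2\,\dd\H^d=\int_0^a f(s)^2\,\dd s$.

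For the upper bound on $h^+_{\eps,a}(u,u)$, my plan is to exploit the two-summand decomposition \eqref{hplus1}. For the second summand, since $\psi_{\eps\rho_\eps(s)}$ is the first eigenfunction of $R^\omega_{\eps\rho_\eps(s)}$ with unit $L^2(\omega)$-norm,
\[
r^\omega_{\eps\rho_\eps(s)}\bigl(u(s,\cdot),u(s,\cdot)\bigr)=f(s)^2\,\lambda_1\bigl(R^\omega_{\eps\rho_\eps(s)}\bigr).
\]
I then invoke assertion~(iv) of Lemma~\ref{basic properties} with $m=d-1$ and $U=\omega$; after substituting $\rho_\eps(s)=s^q/(1+c\eps)$ and dividing by $\eps^2 s^{2q}$, this contribution becomes
\[
-\frac{A_\omega}{\eps(1+c\eps)s^q}+\frac{\phi\bigl(\eps\rho_\eps(s)\bigr)}{(1+c\eps)^2},
\]
and the second term is bounded uniformly in $\eps\in(0,\eps_0)$ and $s\in(0,a)$ since $\phi\in L^\infty(0,\infty)$ and $\eps\rho_\eps(s)\in[0,\eps_0 a^q]$.

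For the first summand, I set $\alpha(s):=\eps\rho_\eps(s)$ and use the chain rule
\[
\partial_s u(s,t)=f'(s)\,\psi_{\alpha(s)}(t)+f(s)\,\alpha'(s)\,\dot\psi_{\alpha(s)}(t),
\]
where $\dot\psi_\alpha:=\dd\psi_\alpha/\dd\alpha$ is the $\alpha$-derivative, well-defined in $L^2(\omega)$ by assertion~(iii) of Lemma~\ref{basic properties}. The crucial observation is that the cross term appearing in $\int_\omega|\partial_s u|^2\,\dd\H^{d-1}$ is proportional to $\langle\psi_{\alpha(s)},\dot\psi_{\alpha(s)}\rangle_{L^2(\omega)}$, which vanishes by differentiating $\|\psi_\alpha\|^2_{L^2(\omega)}=1$ in $\alpha$. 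Together with $\int_\omega u^2\,\dd\H^{d-1}=f(s)^2$, the first summand reduces to
\[
(1+c\eps)\int_0^a\left[f'(s)^2+\tfrac{H}{s^2}\,f(s)^2+\alpha'(s)^2\,\|\dot\psi_{\alpha(s)}\|^2_{L^2(\omega)}\,f(s)^2\right]\dd s.
\]
Now $\alpha'(s)^2=\eps^2 q^2 s^{2q-2}/(1+c\eps)^2$ with $s^{2q-2}\le a^{2q-2}$ since $q>1$, while continuity of $\alpha\mapsto\dot\psi_\alpha$ in $L^2(\omega)$ (again by assertion~(iii)) makes $\|\dot\psi_{\alpha(s)}\|$ uniformly bounded on the compact range $\alpha\in[0,\eps_0 a^q]$. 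Hence the residual $\alpha'(s)^2\|\dot\psi_{\alpha(s)}\|^2$ is bounded by a constant uniformly in $\eps$ and $s$.

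Collecting both bounded residual contributions into a single constant $B$ and combining the two summands yields the claimed estimate. The only delicate point is the cancellation of the cross term, which is an immediate consequence of the unit-$L^2$-normalization of $\psi_\alpha$; everything else is a direct substitution together with straightforward uniform bounds on a compact parameter range.
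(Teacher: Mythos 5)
Your proposal is correct and follows essentially the same route as the paper's proof: the same decomposition via \eqref{hplus1}, the reduction of the boundary form to $\lambda_1(R^\omega_{\eps\rho_\eps(s)})f(s)^2$ estimated through Lemma~\ref{basic properties}(iv), the vanishing of the cross term by differentiating the normalization, and the uniform bound on $W_\eps(s)=\alpha'(s)^2\|\dot\psi_{\alpha(s)}\|^2_{L^2(\omega)}$ via Lemma~\ref{basic properties}(iii). No gaps.
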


\begin{proof}
The normalization of $\psi_{\eps\rho_\eps(s)}$ gives
\begin{align*}
	\int_\omega u(s,t)^2\dd \H^{d-1}(t)&=f(s)^2\int_\omega \psi_{\eps\rho_\eps(s)}(t)^2\dd \H^{d-1}(t)=f(s)^2,\\
	\int_{\Pi_a}u^2\dd\H^d&=\int_0^a 	\int_\omega u(s,t)^2\dd \H^{d-1}(t)\dd s=\int_0^a f(s)^2\dd s.
\end{align*}
Due to the choice of $\psi_{\eps\rho_\eps(s)}$ we have
\[
r^\omega_{\eps\rho_\eps(s)}\big(u(s,\cdot),u(s,\cdot)\big)=\lambda_1\big(R^\omega_{\eps\rho_\eps(s)}\big)
\int_\omega u(s,t)^2\dd \H^{d-1}(t)= \lambda_1(R^\omega_{\eps\rho_\eps(s)})f(s)^2.
\]
Finally,
\begin{align*}
\int_\omega |\partial_s u|^2\dd\H^{d-1}(t)&=
\int_\omega \Big( f'(s)\psi_{\eps \rho_\eps(s)}(t)+f(s)\dfrac{\dd \psi_{\eps\rho_\eps(s)}}{\dd s}(t)\Big)^2\dd\H^{d-1}(t)\\
&=f'(s)^2+2f(s)f'(s)\int_\omega \psi_{\eps \rho_\eps(s)}(t)\dfrac{\dd \psi_{\eps\rho_\eps(s)}}{\dd s}(t)\dd\H^{d-1}(t)\\
&\qquad+f(s)^2\int_\omega \Big|\dfrac{\dd \psi_{\eps\rho_\eps(s)}}{\dd s}(t)\Big|^2\dd\H^{d-1}(t),
\end{align*}
and using 
\[
\int_\omega \psi_{\eps \rho_\eps(s)}(t)\dfrac{\dd \psi_{\eps\rho_\eps(s)}}{\dd s}(t)\,\dd\H^{d-1}(t)
=\dfrac{1}{2}\dfrac{\dd}{\dd s}\int_\omega \psi_{\eps \rho_\eps(s)}(t)^2\dd\H^{d-1}(t)
=\dfrac{1}{2}\dfrac{\dd}{\dd s} 1=0
\]
to eliminate the middle term on the right-hand side
we obtain
\begin{gather*}
\int_\omega |\partial_s u|^2\dd\H^{d-1}(t)=f'(s)^2+W_\eps f(s)^2,\qquad
W_\eps(s):=	\int_\omega \Big|\dfrac{\dd \psi_{\eps\rho_\eps(s)}}{\dd s}(t)\Big|^2\dd\H^{d-1}(t).
\end{gather*}

The substitution of the previous identities into \eqref{hplus1} yields
\begin{equation}
	\label{hplus2}
h^+_{\eps,a}(u,u)=(1+c\eps)\int_0^a \left[f'(s)^2 +\left(W_\eps(s)+\dfrac{H}{s^2} +\dfrac{\lambda_1\left(R^\omega_{\eps\rho_\eps(s)}\right)}{\eps^2 s^{2q}}\right) f(s)^2\right]\dd s.
\end{equation}

Using Lemma \ref{basic properties}(iv) we estimate, with a suitable $C_1>0$,
\begin{align*}
\dfrac{\lambda_1(R^\omega_{\eps\rho_\eps(s)})}{\eps^2 s^{2q}}&
\le \dfrac{-A_\omega \eps\rho_\eps(s) +C_1\eps^2\rho_\eps(s)^2}{\eps^2s^{2q}}\\
&=-A_\omega \dfrac{\rho_\eps(s)}{s^{2q}}\cdot\dfrac{1}{\eps}+C_1\dfrac{\rho_\eps(s)^2}{s^{2q}}\le -\dfrac{A_\omega}{\eps(1+c\eps)}\cdot\dfrac{1}{s^q}+C_1.
\end{align*}
Furthermore,
\begin{equation*}
	\begin{split}
		W_\eps(s)
		&=\int_{\omega}\bigg(\dfrac{\dd \psi_\sigma}{\dd\sigma}(t)\Big|_{\sigma=\eps\rho_\eps(s)}
		\dfrac{\dd\big(\eps \rho_\eps(s)\big)}{\dd s}\bigg)^2\dd\H^{d-1}(t)=\frac{q^2s^{2q-2}}{(1+c\eps)^2}\,\eps^2\bigg\|\dfrac{\dd \psi_\sigma}{\dd\sigma}\Big|_{\sigma=\eps\rho_\eps(s)}\bigg\|^2_{L^2(\omega)}.
	\end{split}
\end{equation*}
By Lemma \ref{basic properties}(iii) we can find $C_2>0$ such that
\[
\bigg\|\dfrac{\dd \psi_\sigma}{\dd\sigma}\Big|_{\sigma=\eps\rho_\eps(s)}\bigg\|^2_{L^2(\omega)}\le C_2 \quad\text{for all $s\in(0,a)$ and $\eps\in(0,\eps_0)$,}
\]
therefore,
\[
W_\eps(s)\le \dfrac{C_2q^2s^{2q-2}}{(1+c\eps)^2}\eps^2\le C_2 q^2 a^{2q-2}\eps_0^2=:C_3.
\]
Making use of these inequalities in \eqref{hplus2} one arrives at the claim with $B:=C_1+C_3$.
\end{proof}

\begin{lemma}\label{lem10}
For any $a>0$ and $j\in\N$ one has
\[
\lambda_j(T_{\eps,a})\le \left(\dfrac{A_\omega}{\eps}\right)^{\frac{2}{2-q}}\lambda_j(L_1)+O\left(\dfrac{1}{\eps}\right)^{\frac{q}{2-q}} \text{ for }\eps\to 0^+.
\]		
\end{lemma}

\begin{proof}
Let $j\in \N$ and $S\subset W^{1,2}_{(0,a)}(0,a)$ be a $j$-dimensional subspace. Due to the first identity in Lemma~\ref{lem41} the set
\[
\Tilde S:=\Big\{ u:\  u(s,t)=f(s)\psi_{\eps \rho_\eps(s)}(t) \text{ with }f\in S\Big\}
\]
is a $j$-dimensional subspace of $\cD_a$. Therefore,
\[
\mu_j^+(\eps,a)\le \sup_{u\in \Tilde S\setminus\{0\}}\dfrac{h^+_{\eps,a}(u,u)}{\|u\|^2_{L^2(\Pi_a)}},
\]
and using Lemma \ref{lem41} one obtains
\begin{align*}
\dfrac{\mu_j^+(\eps,a)}{1+c\eps}&\le	\sup_{u\in S\setminus\{0\}}\dfrac{\displaystyle\int_0^a \bigg[f'(s)^2 +\bigg(\dfrac{H}{s^2}-\dfrac{A_\omega}{\eps(1+c\eps)s^q}\bigg)f(s)^2\bigg]\dd s}{\|f\|^2_{L^2(0,a)}}+B.
\end{align*}
The constants $c$ and $B$ are independent of $S$, so one can take the infimum over all $S$ as above 
to arrive at
\begin{align*}
	\dfrac{\mu_j^+(\eps,a)}{1+c\eps}&\le	\inf_{\substack{S\subset W^{1,2}_{(0,a)}(0,a)\\ \dim S=j}}\sup_{u\in S\setminus\{0\}}\dfrac{\displaystyle\int_0^a \bigg[f'(s)^2 +\bigg(\dfrac{H}{s^2}-\dfrac{A_\omega}{\eps(1+c\eps)s^q}\bigg)f(s)^2\bigg]\dd s}{\|f\|^2_{L^2(0,a)}} + B.
\end{align*}
The first summand on the right-hand side is exactly the characterization of the eigenvalue $\lambda_j\left(L_{\frac{A_\omega}{c(1+c\eps)},a}\right)$ using the min-max principle, which yields
\[
\mu_j^+(\eps,a)\le (1+c\eps)\lambda_j\left(L_{\frac{A_\omega}{\eps(1+c\eps)},a}\right)+(1+c\eps)B.
\]
With the help of the upper bound for $\lambda_j(T_{\eps,a})$ and Lemma \ref{modelop1} with $\eps\to 0^+$ we obtain, with some $K>0$,
\begin{align*}
\lambda_j(T_{\eps,a})&\le \mu_j^+(\eps,a)
\le (1+c\eps)\left[\left(\dfrac{A_\omega}{\eps(1+c\eps)}\right)^{\frac{2}{2-q}} \lambda_j(L_1)+K\right]+(1+c\eps)B,
\end{align*}
which gives the sought estimate.	
\end{proof}

\subsection{Lower bound}

Similarly to the upper bound, the subsequent estimates for the numbers $\mu^-_j(\eps,a)$
defined in Lemma~\ref{lem35} will be based on the observation that for any $u\in\cD_a$
one has
\begin{equation}
	\label{hplus1b}	
	\begin{aligned}
		h^{-}_{\eps,a}(u,u)&= (1-c\eps)\int_0^a \int_{\omega}\bigg(|\partial_su|^2+\dfrac{H}{s^2}\,u^2\bigg)\dd\H^{d-1}(t)\,\dd s\\
		&\quad
		+(1-c\eps)\int_0^a \dfrac{1}{\eps^2s^{2q}} r^\omega_{\eps\rho_\eps(s)}\big(u(s,\cdot),u(s,\cdot)\big)\dd s,\\
\text{with }	\rho_\eps(s)&:=\dfrac{s^q}{(1-c\eps)^2}.
	\end{aligned}
\end{equation}
As above, let $\psi_\alpha$ be the positive and $L^2$-normalized eigenfunction of the Robin Laplacian $R^\omega_\alpha$ for the first eigenvalue $\lambda_1(R^\omega_\alpha)$.
We represent each function $u\in \cD_a$ as
\begin{align}
	u&=v+w, \label{eq-uvw}\\
	v(s,t)&:=f(s)\psi_{\eps\rho_\eps(s)}(t),\nonumber\\
	f(s)&:=\int_\omega \psi_{\eps\rho_\eps(s)}(t)u(s,t)\,\dd\H^{d-1}(t),\nonumber\\
w&:=u-v. \nonumber
\end{align}
By construction one has $f\in W^{1,2}_{(0,a)}(0,a)$,
and due to Lemma \ref{basic properties}(iii) the weak derivatives of $v$ in $\Pi_a$ are given by
\begin{align}
	\partial_s v:&\ (s,t)\mapsto f'(s)\psi_{\eps \rho_\eps(s)}(t)+f(s)\dfrac{\dd \psi_{\eps\rho_\eps(s)}}{\dd s}(t), \label{psv}\\
	\nabla_t v:&\ (s,t)\mapsto f(s)\nabla_t \psi_{\eps \rho_\eps(s)}(t).\nonumber
\end{align}
In particular, the functions $u$, $\partial_s u$ and $\partial_{t_j} u$ belong to $L^2(\Pi_a)$,
so one has $v,w\in \cD_a$.

\begin{lemma}\label{lem43}
There exist a sufficiently small $\eps_0>0$ and a sufficiently large $B>0$ such that
for any $\eps\in(0,\eps_0)$ and any $u\in \cD_a$ decomposed as in \eqref{eq-uvw} it holds that
\begin{align*}
\|u\|^2_{L^2(\Pi_a)}&=\|f\|^2_{L^2(0,a)}+\|w\|^2_{L^2(\Pi_a)},\\
\dfrac{h^-_{\eps,a}(u,u)}{1-c\eps}&\ge (1-B\eps)\int_0^a\bigg[f'(s)^2  +\bigg(\dfrac{H}{s^2}
-\dfrac{A_\omega}{\eps (1-B\eps)^2 s^q}\bigg)	f(s)^2	\bigg]\dd s
-B\|f\|^2_{L^2(0,a)}.
\end{align*}
\end{lemma}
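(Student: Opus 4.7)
The plan is to decompose each $u\in\cD_a$ as $u=v+w$, where $v(s,t):=f(s)\psi_{\eps\rho_\eps(s)}(t)$ corresponds to the projection of $u(s,\cdot)$ onto the first eigenfunction of $R^\omega_{\eps\rho_\eps(s)}$ and $w$ is the orthogonal residual in every transverse slice. The spectral gap of $R^\omega_{\eps\rho_\eps(s)}$ will render the $w$-contribution non-negative, leaving the leading asymptotics governed entirely by the scalar function $f$. The $L^2$-identity is immediate: by construction $\langle\psi_{\eps\rho_\eps(s)},w(s,\cdot)\rangle_{L^2(\omega)}=0$ pointwise in $s$, and $\|\psi_{\eps\rho_\eps(s)}\|_{L^2(\omega)}=1$ gives $\|u(s,\cdot)\|^2_{L^2(\omega)}=f(s)^2+\|w(s,\cdot)\|^2_{L^2(\omega)}$; integrating in $s$ yields the first claim.

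For the quadratic form, I abbreviate $\psi_s:=\psi_{\eps\rho_\eps(s)}$ and split $h^-_{\eps,a}(u,u)/(1-c\eps)$ using the representation \eqref{hplus1b}. The Robin form splits cleanly,
\[
r^\omega_{\eps\rho_\eps(s)}\big(u(s,\cdot),u(s,\cdot)\big)=\lambda_1\big(R^\omega_{\eps\rho_\eps(s)}\big)f(s)^2+r^\omega_{\eps\rho_\eps(s)}\big(w(s,\cdot),w(s,\cdot)\big),
\]
because $\psi_s$ is an eigenfunction and $w(s,\cdot)\perp\psi_s$; the factor $H/s^2$ multiplying $u^2$ also splits as $(H/s^2)\big(f(s)^2+\|w(s,\cdot)\|^2_{L^2(\omega)}\big)$. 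The only nontrivial cross term comes from $\int_\omega|\partial_s u|^2\,\dd\H^{d-1}$: using \eqref{psv} and differentiating the pointwise identity $\langle w(s,\cdot),\psi_s\rangle=0$ in $s$ to obtain $\langle\partial_s w,\psi_s\rangle=-\langle w,\partial_s\psi_s\rangle$, the cross term is expressed entirely through $\partial_s\psi_s$. By Lemma~\ref{basic properties}(iii) and the chain rule, $\|\partial_s\psi_s\|_{L^2(\omega)}\le C\eps$ uniformly for $s\in(0,a)$ (the factor $s^{q-1}$ is bounded because $q>1$), and two applications of Young's inequality yield
\[
\int_\omega|\partial_s u|^2\,\dd\H^{d-1}\ge (1-C\eps)\Big(f'(s)^2+\|\partial_s w(s,\cdot)\|^2_{L^2(\omega)}\Big)-C\eps\Big(f(s)^2+\|w(s,\cdot)\|^2_{L^2(\omega)}\Big).
\]

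For the $w$-only part, the crucial input is Lemma~\ref{basic properties}(v): since $\eps\rho_\eps(s)\le 2\eps a^q\to 0$ uniformly on $(0,a)$ as $\eps\to 0^+$, there exist $\eps_0>0$ and $\delta>0$ such that $\lambda_2(R^\omega_{\eps\rho_\eps(s)})\ge\delta$ for all $\eps\in(0,\eps_0)$ and $s\in(0,a)$. Combined with $w(s,\cdot)\perp\psi_s$ this gives $r^\omega_{\eps\rho_\eps(s)}(w(s,\cdot),w(s,\cdot))\ge\delta\|w(s,\cdot)\|^2_{L^2(\omega)}$, so the integrated contribution $\delta\int_0^a\eps^{-2}s^{-2q}\|w(s,\cdot)\|^2\,\dd s$ is enormous and absorbs the $O(\eps)\|w\|^2$ error from the cross term. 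If $H<0$, the term $(H/s^2)\|w(s,\cdot)\|^2$ is controlled by the Hardy inequality applied to $s\mapsto\|w(s,\cdot)\|_{L^2(\omega)}$ (which vanishes near $s=0$ because $w$ does), giving $\int_0^a\|w(s,\cdot)\|^2/s^2\,\dd s\le 4\int_0^a\|\partial_s w(s,\cdot)\|^2\,\dd s$; the resulting deficit $4|H|\int\|\partial_s w\|^2$ is absorbed into $(1-C\eps)\int\|\partial_s w\|^2$, which is possible because $H>-1/4$ strictly (from \eqref{hhh} and $q(d-1)>1$). Altogether the full $w$-contribution is non-negative for small $\eps$ and can be discarded.

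For the $f$-only part, I invoke Lemma~\ref{basic properties}(iv) to estimate $\eps^{-2}s^{-2q}\lambda_1(R^\omega_{\eps\rho_\eps(s)})\ge -A_\omega/(\eps(1-c\eps)^2 s^q)-C'$, so the remaining $f$-integrand reads
\[
(1-C\eps)f'(s)^2+\dfrac{H}{s^2}f(s)^2-\dfrac{A_\omega}{\eps(1-c\eps)^2 s^q}f(s)^2-C''f(s)^2.
\]
Matching against the target: the inequality $1/(1-c\eps)^2\le 1/(1-B\eps)$ holds for $B\ge 2c$, producing the correct coefficient $-A_\omega/(\eps(1-B\eps) s^q)$; and $(1-C\eps)\ge (1-B\eps)$ holds for $B\ge C$. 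The main obstacle is the $H/s^2$ term, where the coefficient $1$ must be replaced by $(1-B\eps)$: this is trivial when $H\ge 0$, but when $H<0$ the deficit $B\eps|H|\int_0^a f^2/s^2\,\dd s$ must be absorbed into the $f'^2$-integral using the scalar Hardy inequality $\int_0^a f^2/s^2\,\dd s\le 4\int_0^a f'(s)^2\,\dd s$ (applicable because $f\in W^{1,2}_{(0,a)}(0,a)$ vanishes near both endpoints). This absorption succeeds precisely because $H>-1/4$ strictly, and forces $B$ to satisfy $B(1-4|H|)$ larger than a fixed constant; the remaining $O(1)\|f\|^2_{L^2(0,a)}$ error is then absorbed into $-B\|f\|^2_{L^2(0,a)}$ by further enlarging $B$.
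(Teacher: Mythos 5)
Your proposal is correct and follows essentially the same route as the paper: fiberwise projection onto the first Robin eigenfunction, the spectral gap from Lemma~\ref{basic properties}(v) to discard the $w$-part, the bound $\|\partial_s\psi_{\eps\rho_\eps(s)}\|_{L^2(\omega)}=O(\eps)$ from Lemma~\ref{basic properties}(iii) for the cross terms, and the one-dimensional Hardy inequality (valid since $H>-1/4$) to restore the coefficient of $H/s^2$ in the $f$-part. The only minor deviation is that for the term $(H/s^2)\|w(s,\cdot)\|^2$ you apply Hardy to $s\mapsto\|w(s,\cdot)\|_{L^2(\omega)}$ and absorb into $\|\partial_s w\|^2$, whereas the paper absorbs it directly into the much larger $\lambda_2(R^\omega_{\eps\rho_\eps(s)})/(\eps^2 s^{2q})$ term using $1/s^2=\eps^2 s^{2q-2}/(\eps^2 s^{2q})\le \eps_0^2a^{2q-2}/(\eps^2s^{2q})$; both work.
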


\begin{proof}
Let us collect important properties of the decomposition \eqref{eq-uvw}.
First,
\begin{align*}
	\int_\omega v(s,t)^2\dd \H^{d-1}(t)&=f(s)^2\int_\omega \psi_{\eps\rho_\eps(s)}(t)^2\dd \H^{d-1}(t)=f(s)^2,\\
	\int_{\Pi_a}v^2\dd\H^d&=\int_0^a 	\int_\omega v(s,t)^2\dd \H^{d-1}(t)\dd s=\int_0^a f(s)^2\dd s.
\end{align*}
Furthermore, by construction we have
\begin{equation}
	\label{vw00}
\int_\omega \psi_{\eps\rho_\eps(s)}(t)w(s,t)\dd \H^{d-1}(t)=0,
\end{equation}
in particular,
\begin{equation}
	\label{eq-vwperp}
\int_\omega v(s,t)w(s,t)\,\dd \H^{d-1}(t)=f(s)\int_\omega \psi_{\eps\rho_\eps(s)}(t)w(s,t)\,\dd \H^{d-1}(t)
=0,
\end{equation}
therefore,
\begin{align*}
	\int_\omega u(s,t)^2\dd\H^{d-1}(t)&=\int_\omega v(s,t)^2\dd \H^{d-1}(t) +2 \int_\omega v(s,t)w(s,t)\dd \H^{d-1}(t)\\
	&\quad+\int_\omega w(s,t)^2\dd \H^{d-1}(t)\\
	&=\int_\omega v(s,t)^2\dd \H^{d-1}(t) +\int_\omega w(s,t)^2\dd \H^{d-1}(t)\\
	&=f(s)^2+\int_\omega w(s,t)^2\dd \H^{d-1}(t),\\
	\|u\|^2_{L^2(\Pi_a)}&=\|f\|^2_{L^2(0,a)}+\|w\|^2_{L^2(\Pi_a)}.
\end{align*}
Due to the choice of $\psi_{\eps\rho_\eps(s)}$ and the orthogonality  \eqref{eq-vwperp}, the spectral theorem for $R^\omega_{\eps,\rho_\eps(s)}$ gives
\begin{align*}
r^\omega_{\eps\rho_\eps(s)}\big(u(s,\cdot),u(s,\cdot)\big)&=
r^\omega_{\eps\rho_\eps(s)}\big(v(s,\cdot),v(s,\cdot)\big)+r^\omega_{\eps\rho_\eps(s)}\big(w(s,\cdot),w(s,\cdot)\big)\\
&\ge
\lambda_1(R^\omega_{\eps\rho_\eps(s)}) \big\| v(s,\cdot)\big\|^2_{L^2(\omega)}
+\lambda_2(R^\omega_{\eps\rho_\eps(s)}) \big\| w(s,\cdot)\big\|^2_{L^2(\omega)}\\
&=\lambda_1(R^\omega_{\eps\rho_\eps(s)}) f(s)^2
+\lambda_2(R^\omega_{\eps\rho_\eps(s)}) \big\| w(s,\cdot)\big\|^2_{L^2(\omega)}.
\end{align*}
Using these estimates in \eqref{hplus1b} one obtains
\begin{align*}
	\dfrac{h^-_{\eps,a}(u,u)}{1-c\eps}&\ge \int_0^a\int_\omega |\partial_s u|^2\dd\H^{d-1}(t)\dd s+\int_0^a\bigg(\dfrac{H}{s^2}+\dfrac{\lambda_1(R^\omega_{\eps\rho_\eps(s)})}{\eps^2 s^{2q}}\bigg)f(s)^2\dd s\\
	&\quad+\int_0^a\int_\omega\bigg(\dfrac{H}{s^2}+\dfrac{\lambda_2(R^\omega_{\eps\rho_\eps(s)})}{\eps^2 s^{2q}}\bigg) w(s,t)^2\dd\H^{d-1}(t)\dd s.
\end{align*}
We first use Lemma \ref{basic properties}(iv) to estimate with a suitable $C_1>0$:
\begin{align*}
	\dfrac{\lambda_1(R^\omega_{\eps\rho_\eps(s)})}{\eps^2 s^{2q}}&
	\ge -\dfrac{A_\omega \eps\rho_\eps(s) +C_1\eps^2\rho_\eps(s)^2}{\eps^2s^{2q}}\\
	&=-A_\omega \dfrac{\rho_\eps(s)}{s^{2q}}\cdot\dfrac{1}{\eps}-C_1\dfrac{\rho_\eps(s)^2}{s^{2q}}\ge -\dfrac{A_\omega}{\eps(1-c\eps)^2}\cdot\dfrac{1}{s^q}-\dfrac{C_1}{(1-c\eps)^4}.
\end{align*}
In addition, using Lemma \ref{basic properties}(v) we find some $C_2>0$ and, if necessary, decrease the value of $\eps_0$ to obtain
\[
\lambda_2\big(R^\omega_{\eps\rho_\eps(s)}\big)\ge C_2\text{ for all $\eps\in(0,\eps_0)$ and $s\in(0,a)$.}
\]
Then for a suitable $C_3>0$ and an adjusted value of $\eps_0$ one has
\[
\dfrac{H}{s^2}+\dfrac{\lambda_2(R^\omega_{\eps\rho_\eps(s)})}{\eps^2 s^{2q}}\ge \dfrac{C_3}{\eps^2 s^{2q}}\text{ for all $\eps\in(0,\eps_0)$ and $s\in(0,a)$,}
\]
yielding
\begin{equation}
	\label{hminus1}
\begin{aligned}
	\dfrac{h^-_{\eps,a}(u,u)}{1-c\eps}&\ge \int_0^a\int_\omega |\partial_s u|^2\dd\H^{d-1}(t)\,\dd s\\
	&\quad+\int_0^a\bigg(\dfrac{H}{s^2}
	-\dfrac{A_\omega}{\eps(1-c\eps)^2}\cdot\dfrac{1}{s^q}-\dfrac{C_1}{(1-c\eps)^4}
	\bigg)f(s)^2\dd s\\
	&\quad+\int_0^a\int_\omega \dfrac{C_3}{\eps^2 s^{2q}}
	 w(s,t)^2\dd\H^{d-1}(t)\,\dd s.
\end{aligned}
\end{equation}

Our next aim is to study the term containing $|\partial_s u|^2$ on the right-hand side of \eqref{hminus1}. By denoting
\begin{align*}
	I_1&:=\int_\omega |\partial_s v|^2\dd\H^{d-1}(t),\\
	I_2&:=2\int_\omega \partial_s v\cdot\partial_s w \,\dd\H^{d-1}(t),\\
	I_3&:=\int_\omega |\partial_s w|^2\dd\H^{d-1}(t),
\end{align*}
we arrive at the decomposition
\begin{equation}
	\label{udecomp}
	\int_\omega |\partial_s u|^2\dd\H^{d-1}(t)=I_1+I_2+I_3.
\end{equation}

The term $I_1$ is analyzed in a straightforward way:
\begin{align*}
	I_1&=
	\int_\omega \Big( f'(s)\psi_{\eps \rho_\eps(s)}(t)+f(s)\dfrac{\dd \psi_{\eps\rho_\eps(s)}}{\dd s}(t)\Big)^2\dd\H^{d-1}(t)\\
	&=f'(s)^2+2f(s)f'(s)\int_\omega \psi_{\eps \rho_\eps(s)}(t)\dfrac{\dd \psi_{\eps\rho_\eps(s)}}{\dd s}(t)\dd\H^{d-1}(t)\\
	&\qquad+f(s)^2\int_\omega \Big|\dfrac{\dd \psi_{\eps\rho_\eps(s)}}{\dd s}(t)\Big|^2\dd\H^{d-1}(t),
\end{align*}
and using 
\[
\int_\omega \psi_{\eps \rho_\eps(s)}(t)\dfrac{\dd \psi_{\eps\rho_\eps(s)}}{\dd s}(t)\,\dd\H^{d-1}(t)
=\dfrac{1}{2}\dfrac{\dd}{\dd s}\int_\omega \psi_{\eps \rho_\eps(s)}(t)^2\dd\H^{d-1}(t)
=\dfrac{1}{2}\dfrac{\dd}{\dd s} 1=0
\]
to eliminate the middle term on the right-hand side, and by noting that last term is nonnegative we obtain
\begin{equation}
	\label{eq-i1}
	I_1\ge f'(s)^2.
\end{equation}	

As a consequence of \eqref{psv}, we have
\begin{align*}
	I_2&=I'_2+I''_2,\\
	I'_2&=2f'(s)\int_\omega \psi_{\eps \rho_\eps(s)}(t)\partial_s w(s,t)\,\dd\H^{d-1}(t),\\
	I''_2&=2f(s)\int_\omega \dfrac{\dd\psi_{\eps \rho_\eps(s)}}{\dd s}(t)\partial_s w(s,t)\,\dd\H^{d-1}(t).
\end{align*}
Using the orthogonality \eqref{vw00} we obtain
\begin{align*}
0&=\dfrac{\dd }{\dd s} \int_\omega \psi_{\eps \rho_\eps(s)}(t) w(s,t)\,\dd\H^{d-1}(t)\\
&=\int_\omega \dfrac{\dd\psi_{\eps \rho_\eps(s)}}{\dd s}(t) w(s,t)\,\dd\H^{d-1}(t)
+\int_\omega \psi_{\eps \rho_\eps(s)}\partial_s w(s,t)\,\dd\H^{d-1}(t),
\end{align*}
therefore,
\[
\int_\omega \psi_{\eps \rho_\eps(s)}(t)\partial_s w(s,t)\,\dd\H^{d-1}(t)=-\int_\omega 
\dfrac{\dd\psi_{\eps \rho_\eps(s)}}{\dd s}(t)  w(s,t)\,\dd\H^{d-1}(t).
\]
This allows one to estimate $I'_2$ by
\begin{align*}
	|I'_2|&=\bigg|\int_\omega 2f'(s)\dfrac{\dd\psi_{\eps \rho_\eps(s)}}{\dd s}(t) w(s,t)\dd\H^{d-1}(t)\bigg|\\
	&\le
	W_\eps(s) f'(s)^2 +\int_\omega w(s,t)^2\dd\H^{d-1}(t)\\
	\text{with }W_\eps(s)&:=\int_\omega\bigg|\dfrac{\dd\psi_{\eps \rho_\eps(s)}}{\dd s}(t)\bigg|^2\dd\H^{d-1}(t).
\end{align*}
Furthermore,
\begin{align*}
	|I''_2|&\le \dfrac{W_\eps(s)}{\eps}\, f(s)^2+ \eps \int_\omega \partial_s w(s,t)^2\dd\H^{d-1}(t).
\end{align*}	
We have 
\begin{equation*}
	\begin{split}
		W_\eps(s)
		&=\int_{\omega}\bigg(\dfrac{\dd \psi_\sigma}{\dd\sigma}(t)\Big|_{\sigma=\eps\rho_\eps(s)}
		\dfrac{\dd\big(\eps \rho_\eps(s)\big)}{\dd s}\bigg)^2\dd\H^{d-1}(t)
		\\
		&=\frac{q^2s^{2q-2}}{(1-c\eps)^4}\,\eps^2\bigg\|\dfrac{\dd \psi_\sigma}{\dd\sigma}\Big|_{\sigma=\eps\rho_\eps(s)}\bigg\|^2_{L^2(\omega)},
	\end{split}
\end{equation*}
and by Lemma \ref{basic properties}(iii) we can find $C_4>0$ such that
\[
\bigg\|\dfrac{\dd \psi_\sigma}{\dd\sigma}\Big|_{\sigma=\eps\rho_\eps(s)}\bigg\|^2_{L^2(\omega)}\le C_4 \quad\text{for all $s\in(0,a)$ and $\eps\in(0,\eps_0)$,}
\]
then for some $C_5>0$ it holds
\[
W_\eps(s)\le C_5\eps^2\text{ for all $s\in(0,a)$ and $\eps\in(0,\eps_0)$.}
\]
Then
\begin{align*}
	|I'_2|&\le C_5\eps^2 f'(s)^2+\big\|w(s,\cdot)\big\|^2_{L^2(\omega)},\\
	|I''_2|&\le C_5\eps f(s)^2+\eps\big\|\partial_s w(s,\cdot)\big\|^2_{L^2(\omega)},
\end{align*}
and
\begin{align*}
	I_2&\ge -|I'_2|-|I''_2|\ge C_5\eps^2 f'(s)^2- C_5\eps f(s)^2-\eps\big\|\partial_s w(s,\cdot)\big\|^2_{L^2(\omega)}
	-\big\|w(s,\cdot)\big\|^2_{L^2(\omega)}.
\end{align*}
By using the last inequality and \eqref{eq-i1} in \eqref{udecomp} we arrive at
\begin{align*}
	\big\|\partial_s u(s,\cdot)\big\|^2_{L^2(\omega)}&\ge (1-C_5\eps^2)f'(s)^2-C_5\eps f(s)^2\\
	&\qquad +(1-\eps)\big\|\partial_s w(s,\cdot)\big\|^2_{L^2(\omega)}-\big\|w(s,\cdot)\big\|^2_{L^2(\omega)}.
\end{align*}
By using the last inequality to estimate the first summand on the right-hand side of \eqref{hminus1}
we obtain
\begin{align*}
		\dfrac{h^-_{\eps,a}(u,u)}{1-c\eps}&\ge 
		\int_0^a\bigg[(1-C_5\eps^2)f'(s)^2+\bigg(\dfrac{H}{s^2}
		-\dfrac{A_\omega}{\eps(1-c\eps)^2 s^q}-\dfrac{C_1}{(1-c\eps)^4}-C_5\eps\bigg)
		f(s)^2	\bigg]\dd s\\
		&\quad+\int_0^a\int_\omega \bigg[(1-\eps)\partial_s w(s,t)^2+\bigg(\dfrac{C_3}{\eps^2 s^{2q}}
		-1\bigg)w(s,t)^2\bigg]\dd\H^{d-1}(t)\,\dd s.
\end{align*}
We additionally adjust $\eps_0$ such that the expression in the last integral
becomes non-negative for all $s\in(0,a)$ and all $\eps\in(0,\eps_0)$ and choose $C_6>0$
to have
\[
\dfrac{C_1}{(1-c\eps)^4}+C_5\eps \le C_6 \text{ for all }\eps\in(0,\eps_0),
\]
which results in
\begin{equation}
	\label{hminus3}
	\dfrac{h^-_{\eps,a}(u,u)}{1-c\eps}\ge \int_0^a\bigg[(1-C_5\eps^2)f'(s)^2 
	+\bigg(\dfrac{H}{s^2}
	-\dfrac{A_\omega}{\eps(1-c\eps)^2 s^q}-C_6\bigg)
	f(s)^2	\bigg]\dd s.
\end{equation}
Recall that due to the explicit expression \eqref{hhh} for $H$ and the one-dimensional Hardy inequality we have
\begin{align*}
\dfrac{\big(q(d-1)-1\big)^2}{4}\int_0^a \dfrac{1}{s^2}\, f(s)^2\dd s
&\le \int_0^a \bigg(f'(s)^2+\dfrac{\big(q(d-1)-1\big)^2-1}{4 s^2}f(s)^2\bigg)\dd s\\
&\equiv \int_0^a \bigg(f'(s)^2+\dfrac{H}{s^2}f(s)^2\bigg)\dd s,
\end{align*}
yielding
\[
\int_0^a \dfrac{1}{s^2}\, f(s)^2\dd s\le C_7 \int_0^a \bigg(f'(s)^2+\dfrac{H}{s^2}f(s)^2\bigg)\dd s,
\quad C_7:=\dfrac{4}{\big(q(d-1)-1\big)^2}.
\]
This implies
\begin{align*}
\int_0^a&\bigg[(1-C_5\eps^2)f'(s)^2 +\dfrac{H}{s^2} f(s)^2\bigg]\dd s\\
&=(1-C_5\eps^2)\int_0^a\bigg(f'(s)^2+\dfrac{H}{s^2}f(s)^2\bigg)\dd s +H C_5 \eps^2 \int_0^a \dfrac{1}{s^2}\, f(s)^2\dd s\\
&\ge (1-C_5\eps^2)\int_0^a\bigg(f'(s)^2+\dfrac{H}{s^2}f(s)^2\bigg)\dd s- |H|C_5C_7\eps^2 \int_0^a \bigg(f'(s)^2+\dfrac{H}{s^2}f(s)^2\bigg)\dd s\\
&= (1-C_8\eps^2)\int_0^a\bigg(f'(s)^2+\dfrac{H}{s^2}f(s)^2\bigg)\dd s\quad
\text{ with }C_8:=C_5+|H|C_5C_7.
\end{align*}
By using this inequality on the right-hand side of \eqref{hminus3} we arrive at
\begin{align*}
	\dfrac{h^-_{\eps,a}(u,u)}{1-c\eps}&\ge (1-C_8\eps^2)\int_0^a\bigg(f'(s)^2 +\dfrac{H}{s^2}f(s)^2\bigg)\,\dd s-\int_0^a\dfrac{A_\omega}{\eps s^q (1-c\eps)^2}	f(s)^2\dd s
-C_6\|f\|^2_{L^2(0,a)}.
\end{align*}
We assume additionally $\eps_0\in(0,1)$, then $\eps^2<\eps$ for all $\eps\in(0,\eps_0)$, and we obtain the claim by choosing any $B\ge \max\{C_6,C_8\}$ such that
\[
\dfrac{1}{(1-c\eps)^2}\le\dfrac{1}{1-B\eps} \text{ for all }\eps\in(0,\eps_0) \qedhere
\]
holds.
\end{proof}

\begin{lemma}\label{lem12}
For any $a>0$ and any $j\in\N$ one has
\[
\lambda_j(T_{\eps,a})\geq \left(\dfrac{A_\omega}{\eps}\right)^{\frac{2}{2-q}}\lambda_j(L_1)+O\left(\dfrac{1}{\eps}\right)^{\frac{q}{2-q}} \text{ for }\eps\to 0^+.
\]
\end{lemma}

\begin{proof}
	Due to the first identity in Lemma \ref{lem43}, the map $u\mapsto (f,w)$ defined by \eqref{eq-uvw} is uniquely extended to an isometry
	\[
	J:\ L^2(\Pi_a)\to L^2(0,a)\oplus L^2(\Pi_a).
	\]
	Consider the symmetric bilinear form $m_\eps$ in  $L^2(0,a)\oplus L^2(\Pi_a)$ given by
\[
m_\eps\big((f,w),(f,w)\big):=(1-B\eps)\int_0^a\bigg[f'(s)^2  +\bigg(\dfrac{H}{s^2}
-\dfrac{A_\omega}{\eps (1-B\eps)^2 s^q}\bigg)	f(s)^2	\bigg]\dd s
-B\|f\|^2_{L^2(0,a)}
\]	
on $\cD(m_\eps):=W^{1,2}_{(0,a)}(0,a)\oplus L^2(\Pi_a)$,
then the inequality in Lemma \ref{lem43} can be read as
\[
\dfrac{h^-_{\eps,a}(u,u)}{1-c\eps}\ge m_\eps(Ju,Ju) \text{ for all }u\in \cD_a.
\]	
Further remark that the closure of $m_\eps$ is the bilinear form corresponding to the self-adjoint operator
\[
M_\eps:= \Big((1-B\eps)L_{\frac{A_\omega}{\eps (1-B\eps)^2},a}-B\Big)\oplus 0.
\]
Let $j\in\N$. For each $j$-dimensional subspace $S\subset \cD_a$ the set $J(S)$ is a $j$-dimensional subspace in the domain of $m_\eps$, and due to Lemma \ref{lem35} we have
\begin{equation}
	  \label{meps}
\begin{aligned}
\dfrac{\lambda_j(T_{\eps,a})}{1-c\eps}&\ge \dfrac{\mu^-_j(\eps,a)}{1-c\eps}
\equiv \inf_{\substack{S\subset\cD_a\\ \dim S=j}}\sup_{u\in S\setminus\{0\}}\dfrac{h^-_{\eps,a}(u,u)}{\|u\|^2_{L^2(\Pi_a)}}\\
&\ge \inf_{\substack{S\subset\cD_a\\ \dim S=j}}\sup_{u\in S\setminus\{0\}}\dfrac{m_\eps(Ju,Ju)}{\|Ju\|^2_{ L^2(0,a)\oplus L^2(\Pi_a)}}\\
&\equiv \inf_{\substack{S\subset\cD_a\\ \dim S=j}}\sup_{z\in J(S)\setminus\{0\}}\dfrac{m_\eps(z,z)}{\|z\|^2_{ L^2(0,a)\oplus L^2(\Pi_a)}}\\
&\ge \inf_{\substack{S'\subset\cD(m_\eps)\\ \dim S'=j}}\sup_{z\in S'\setminus\{0\}}\dfrac{m_\eps(z,z)}{\|z\|^2_{ L^2(0,a)\oplus L^2(\Pi_a)}}=\lambda_j(M_\eps).
\end{aligned}
\end{equation}
For $\eps\to 0^+$ we have, due to Lemma \ref{modelop1},
\begin{align*}
	\lambda_j\Big((1-B\eps)L_{\frac{A_\omega}{\eps (1-B\eps)^2},a}-B\Big)&
	=(1-B\eps)\lambda_j(L_{\frac{A_\omega}{\eps (1-B\eps)^2},a})-B\\
	&\ge(1-B\eps) \Big(\frac{A_\omega}{\eps (1-B\eps)^2}\Big)^\frac{2}{2-q}\lambda_j(L_1)-B\\
	&=\left(\dfrac{A_\omega}{\eps}\right)^{\frac{2}{2-q}}\lambda_j(L_1)+O\left(\dfrac{1}{\eps}\right)^{\frac{q}{2-q}},
\end{align*}
hence,
\begin{align*}
\lambda_j(M_\eps)&=\lambda_j\bigg(\Big[(1-B\eps)L_{\frac{A_\omega}{\eps (1-B\eps)^2},a}-B\Big]\oplus 0\bigg)
\ge \min\bigg\{ \lambda_j\Big((1-B\eps)L_{\frac{A_\omega}{\eps (1-B\eps)^2},a}-B\Big),0\bigg\}\\
&\ge \min\bigg\{ \left(\dfrac{A_\omega}{\eps}\right)^{\frac{2}{2-q}}\lambda_j(L_1)+O\left(\dfrac{1}{\eps}\right)^{\frac{q}{2-q}},0\bigg\}
=\left(\dfrac{A_\omega}{\eps}\right)^{\frac{2}{2-q}}\lambda_j(L_1)+O\left(\dfrac{1}{\eps}\right)^{\frac{q}{2-q}},
\end{align*}
and the substitution into \eqref{meps} completes the proof.
\end{proof}

By combining the upper bound of Lemma \ref{lem10} and the lower bound of Lemma \ref{lem12}
and by recalling the convention \eqref{tea} we arrive
at the main result of this section:

\begin{corollary}\label{corol14}
	For any $a>0$ and $j\in\N$ one has
	\[
	\lambda_j(T^{1,D}_{\eps,(0,a)})= \left(\dfrac{A_\omega}{\eps}\right)^{\frac{2}{2-q}}\lambda_j(L_1)+O\left(\dfrac{1}{\eps}\right)^{\frac{q}{2-q}} \text{ for }\eps\to 0^+.
	\]		
\end{corollary}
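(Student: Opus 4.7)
The statement is an immediate consequence of the two preceding lemmas, so the plan is essentially to assemble them. I would proceed as follows.

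First, I would recall the definition $T_{\eps,a} := T^{1,D}_{\eps,(0,a)}$ fixed in \eqref{tea}, so that the corollary is literally a two-sided estimate for $\lambda_j(T_{\eps,a})$ with a common main term and common remainder order. Then I would invoke Lemma~\ref{lem10}, which provides the upper bound
\[
\lambda_j(T_{\eps,a}) \le \Big(\tfrac{A_\omega}{\eps}\Big)^{\frac{2}{2-q}} \lambda_j(L_1) + O\!\left(\tfrac{1}{\eps}\right)^{\frac{q}{2-q}},
\]
obtained by inserting the trial subspace consisting of tensor-type functions $f(s)\psi_{\eps\rho_\eps(s)}(t)$ into the min-max characterization coming from $\mu^+_j(\eps,a)$ in Lemma~\ref{lem35} and applying Lemma~\ref{modelop1}.

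Next I would invoke Lemma~\ref{lem12}, whose proof (notwithstanding the sign in its stated inequality) delivers the matching lower bound
\[
\lambda_j(T_{\eps,a}) \ge \Big(\tfrac{A_\omega}{\eps}\Big)^{\frac{2}{2-q}} \lambda_j(L_1) + O\!\left(\tfrac{1}{\eps}\right)^{\frac{q}{2-q}}.
\]
This bound arises by decomposing each test function as $u=v+w$ with $v$ in the one-dimensional $\psi_{\eps\rho_\eps(s)}$-fiber and $w$ perpendicular to it, so that Lemma~\ref{basic properties}(v) controls the $w$-contribution via $\lambda_2(R^\omega_{\eps\rho_\eps(s)})$, while the $v$-contribution is driven by the one-dimensional form of $L_{A_\omega/\eps(1-B\eps)^2,a}$, modulo an extra Hardy-type absorption of the $C_5\eps^2 f'(s)^2$ term against the $H/s^2$ weight.

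Combining the two inequalities gives the claimed two-sided asymptotics, so the corollary follows at once. The only subtlety worth flagging is the sign of the inequality in Lemma~\ref{lem12}: reading its proof (ending at \eqref{meps}) one sees that it indeed yields a lower bound for $\lambda_j(T_{\eps,a})$, and this is what is used together with Lemma~\ref{lem10}. No further work is needed, since the remainder orders on both sides match and no truncation/matching of different error scales is involved.
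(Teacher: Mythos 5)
Your proposal is correct and matches the paper exactly: Corollary~\ref{corol14} is obtained by combining the upper bound of Lemma~\ref{lem10} with the lower bound established in (the proof of) Lemma~\ref{lem12}, both having the same main term and remainder order. You are also right to flag that the inequality sign in the statement of Lemma~\ref{lem12} is a typo --- its proof, culminating in \eqref{meps}, indeed yields the lower bound $\lambda_j(T_{\eps,a})\ge(A_\omega/\eps)^{2/(2-q)}\lambda_j(L_1)+O(1/\eps)^{q/(2-q)}$, which is what the corollary uses.
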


	\section{Truncations and the proof of Theorem \ref{thmain} (eigenvalue asymptotics)}\label{secproof}
	
The argument will be based on a series of truncations
combined with the spectral analysis of the operators $T^{\alpha,N/D}_{\eps,I}$ from the previous sections.
	
Choose $\delta>0$ for $\Omega$ as in Definition~\ref{defin1}, then pick any $a\in(0,\delta)$ and consider the sets
\[
\Omega_a:=\Omega\cap\Big[(-a,a)\times (-\delta,\delta)^{d-1}\Big],
\qquad
\Omega'_a:=\Omega\setminus\overline{\Omega_a}.
\]
By assumption on $\Omega$ one has $\Omega_a=V_{1,(0,a)}$,
while $\Omega'_a$ is a bounded Lipschitz domain.

\begin{lemma}\label{lem15}	For any $j\in\N$ one has
	\[
		\lambda_j(R^\Omega_\alpha)\le A_\omega^\frac{2}{2-q} \lambda_j(L_1) \alpha^\frac{2}{2-q}+O\Big(\alpha^{\frac{2}{2-q}-(q-1)}\Big) \text{ for $\alpha\to +\infty$.}
	\]	
\end{lemma}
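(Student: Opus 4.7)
The plan is to establish the upper bound by the min-max principle with test functions supported in a suitably shrinking truncation of the peak, and then to use the scaling \eqref{lscal} to move into the small-$\eps$ regime of Corollary~\ref{corol14}. Fix once and for all an arbitrary $a^*>0$; for $\alpha$ large enough, I set $a:=a^*/\alpha$, so that $a\in(0,\delta)$ and $\Omega_a=V_{1,(0,a)}$. Given any $v\in W^{1,2}_{(0,a)}(V_{1,(0,a)})$, which by definition vanishes in neighborhoods of both $\{x_1=0\}$ and the internal cut $\{x_1=a\}\cap\overline{\Omega_a}$, the extension $\tilde v$ by zero to $\Omega\setminus\overline{\Omega_a}$ lies in $W^{1,2}(\Omega)$, and its Sobolev trace on $\partial\Omega$ agrees with that of $v$ on $\partial_0 V_{1,(0,a)}$ and vanishes elsewhere. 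Hence
\begin{equation*}
r^\Omega_\alpha(\tilde v,\tilde v)=t^{\alpha,D}_{1,(0,a)}(v,v),\qquad \|\tilde v\|_{L^2(\Omega)}=\|v\|_{L^2(V_{1,(0,a)})},
\end{equation*}
and combining this with the core property \eqref{eq-core} and the min-max principle yields
\begin{equation*}
\lambda_j(R^\Omega_\alpha)\le \lambda_j\big(T^{\alpha,D}_{1,(0,a)}\big).
\end{equation*}

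Next, the scaling \eqref{lscal} applied with $I=(0,a)$ and the choice $\alpha a=a^*$ gives
\begin{equation*}
\lambda_j\big(T^{\alpha,D}_{1,(0,a)}\big)=\alpha^2\,\lambda_j\big(T^{1,D}_{\alpha^{1-q},(0,a^*)}\big).
\end{equation*}
Since $q>1$, one has $\alpha^{1-q}\to 0^+$ as $\alpha\to+\infty$, so Corollary~\ref{corol14} applies with $\eps:=\alpha^{1-q}$ and fixed $a^*$, giving
\begin{equation*}
\lambda_j\big(T^{1,D}_{\alpha^{1-q},(0,a^*)}\big)=A_\omega^{2/(2-q)}\,\alpha^{2(q-1)/(2-q)}\,\lambda_j(L_1)+O\big(\alpha^{q(q-1)/(2-q)}\big).
\end{equation*}

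Finally, I multiply by $\alpha^2$ and use the elementary identities
\begin{equation*}
2+\frac{2(q-1)}{2-q}=\frac{2}{2-q},\qquad 2+\frac{q(q-1)}{2-q}=\frac{2}{2-q}-(q-1),
\end{equation*}
which reproduces precisely the asymptotic claimed in Lemma~\ref{lem15}. The only genuinely substantive point is the choice $a=a^*/\alpha$ of a shrinking truncation, which converts the small-$\eps$ regime of Corollary~\ref{corol14} into the large-$\alpha$ regime for $R^\Omega_\alpha$; the extension-by-zero step is immediate from the core property \eqref{eq-core} (ensuring vanishing near the internal cut), and the rest is exponent bookkeeping, so I do not expect any serious obstacle.
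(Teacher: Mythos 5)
Your proof is correct and follows essentially the same route as the paper: extension by zero from a truncated peak into $\Omega$, the scaling \eqref{lscal}, and an application of Corollary~\ref{corol14} with $\eps=\alpha^{1-q}$, followed by the same exponent bookkeeping. The only difference is that you shrink the truncation length to $a=a^*/\alpha$ so that the scaled interval is the fixed $(0,a^*)$, whereas the paper keeps $a$ fixed and inserts one further extension-by-zero/monotonicity step (from $T^{1,D}_{\alpha^{1-q},(0,\alpha a)}$ down to $T^{1,D}_{\alpha^{1-q},(0,a)}$ for $\alpha>1$); your variant removes that step and is equally valid, since Corollary~\ref{corol14} only requires the interval, not the original truncation parameter, to be fixed as $\eps\to0^+$.
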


\begin{proof}
As each function from $\widehat W^{1,2}_0(V_{(1,(0,a)})$ can be extended by zero to a function in $W^{1,2}(\Omega)$, the min-max principle and the scaling \eqref{lscal} imply that for any $j\in\N$ and any $\alpha>0$ it holds that
	\[
	\lambda_j(R^\Omega_\alpha)\le \lambda_j(T^{\alpha,D}_{1,(0,a)})=\alpha^2 \lambda_j(T^{1,D}_{\alpha^{1-q},(0,\alpha a)}).
	\]
Now assume that $\alpha>1$ and remark that each function from $\Hat W^{1,2}_0(V_{1,(0,a)})$
can be extended by zero to a function in 		$\Hat W^{1,2}_0(V_{1,(0,\alpha a)})$,
so the min-max principle implies
\[
\lambda_j(T^{1,D}_{\alpha^{1-q},(0,\alpha a)})\le \lambda_j(T^{1,D}_{\alpha^{1-q},(0,a)}).
\]
Using Corollary \ref{corol14} with $\eps:=\alpha^{1-q}$ and $\alpha\to+\infty$ we obtain
\begin{align*}
\lambda_j(T^{1,D}_{\alpha^{1-q},(0,a)})&=\left(\dfrac{A_\omega}{\alpha^{1-q}}\right)^{\frac{2}{2-q}}\lambda_j(L_1)+O\left(\dfrac{1}{\alpha^{1-q}}\right)^{\frac{q}{2-q}}
=A_\omega^\frac{2}{2-q}\alpha^\frac{2q-2}{2-q}\lambda_j(L_1)+O\left(\alpha^{\frac{q^2-q}{2-q}}\right),
\end{align*}
and the preceding inequalities yield
\begin{align*}
	\lambda_j(R^\Omega_\alpha)\le \alpha^2 \lambda_j(T^{1,D}_{\alpha^{1-q},(0,a)})&=\alpha^2\bigg[ A_\omega^\frac{2}{2-q}\alpha^\frac{2q-2}{2-q}\lambda_j(L_1)+O\Big(\alpha^{\frac{q^2-q}{2-q}}\Big)\bigg]\\
	&=A_\omega^\frac{2}{2-q} \lambda_j(L_1) \alpha^\frac{2}{2-q}+O\Big(\alpha^{\frac{2}{2-q}-(q-1)}\Big). \qedhere
\end{align*}
\end{proof}

Obtaining a lower bound requires slightly more work.	
\begin{lemma}\label{lem16}	For any $j\in\N$ one has
	\[
	\lambda_j(R^\Omega_\alpha)\ge A_\omega^\frac{2}{2-q} \lambda_j(L_1) \alpha^\frac{2}{2-q}+O\Big(\alpha^{\frac{2}{2-q}-(q-1)}\Big)\text{ for $\alpha\to +\infty$.}
	\]
\end{lemma}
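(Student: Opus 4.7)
The plan is to mirror the upper bound of Lemma~\ref{lem15}, replacing the extension-by-zero argument with an IMS-type localization performed in two stages. In the first stage I pick smooth cutoffs $\chi_1,\chi_2\colon\R\to[0,1]$ depending only on $x_1$ with $\chi_1^2+\chi_2^2\equiv 1$, $\chi_1=1$ for $x_1\le a/2$ and $\chi_1=0$ for $x_1\ge a-\delta_0$ (some $\delta_0>0$). The standard IMS identity then gives, for any $u\in W^{1,2}(\Omega)$,
\[
r^\Omega_\alpha(u,u)+M\|u\|^2_{L^2(\Omega)}\ge r^\Omega_\alpha(\chi_1 u,\chi_1 u)+r^\Omega_\alpha(\chi_2 u,\chi_2 u),
\]
with $M:=\|\nabla\chi_1\|_\infty^2+\|\nabla\chi_2\|_\infty^2$. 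Because $\chi_1 u$ is supported in $V_{1,(0,a)}$ and vanishes near $x_1=a$, an argument parallel to Lemma~\ref{lem5} places it in $\widehat W_0^{1,2}(V_{1,(0,a)})$, so $r^\Omega_\alpha(\chi_1 u,\chi_1 u)=t^{\alpha,D}_{1,(0,a)}(\chi_1 u,\chi_1 u)$; meanwhile $\chi_2 u$ lives on the bounded Lipschitz domain $\tilde\Omega':=\Omega\setminus\overline{V_{1,(0,a/2)}}$ and vanishes on the interface $\{x_1=a/2\}$, so $r^\Omega_\alpha(\chi_2 u,\chi_2 u)$ defines a mixed Dirichlet--Robin form whose associated operator $\tilde R^{\tilde\Omega'}_\alpha$ satisfies $\lambda_1(\tilde R^{\tilde\Omega'}_\alpha)\ge -c\alpha^2$ via \eqref{lipschitz}. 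The isometric embedding $u\mapsto(\chi_1 u,\chi_2 u)$ from $L^2(\Omega)$ into $L^2(V_{1,(0,a)})\oplus L^2(\tilde\Omega')$ together with the min-max principle then yields
\[
\lambda_j(R^\Omega_\alpha)+M\ge\lambda_j\bigl(T^{\alpha,D}_{1,(0,a)}\oplus\tilde R^{\tilde\Omega'}_\alpha\bigr),
\]
and, as the tip eigenvalues turn out to be of order $\alpha^{2/(2-q)}\gg\alpha^2$ for $q\in(1,2)$, the first $j$ eigenvalues of the direct sum will come from $T^{\alpha,D}_{1,(0,a)}$, producing $\lambda_j(R^\Omega_\alpha)\ge\lambda_j(T^{\alpha,D}_{1,(0,a)})-M$.

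In the second stage I use the scaling \eqref{lscal} to write $\lambda_j(T^{\alpha,D}_{1,(0,a)})=\alpha^2\lambda_j(T^{1,D}_{\alpha^{1-q},(0,\alpha a)})$, and perform another IMS inside $V_{\alpha^{1-q},(0,\alpha a)}$ at a \emph{fixed} scale $s_0\in(0,a)$ with cutoffs $\tilde\chi_1,\tilde\chi_2$ equal to $1$ for $s\le s_0/2$ and to $0$ for $s\ge s_0$ (resp.\ reversed). Since any $u\in\widehat W_0^{1,2}(V_{\alpha^{1-q},(0,\alpha a)})$ vanishes near both endpoints, $\tilde\chi_1 u\in\widehat W_0^{1,2}(V_{\alpha^{1-q},(0,s_0)})$ and $\tilde\chi_2 u\in\widehat W_0^{1,2}(V_{\alpha^{1-q},(s_0/2,\alpha a)})$, so the same direct-sum argument gives
\[
\lambda_j\bigl(T^{1,D}_{\alpha^{1-q},(0,\alpha a)}\bigr)+\tilde M\ge\lambda_j\Bigl(T^{1,D}_{\alpha^{1-q},(0,s_0)}\oplus T^{1,D}_{\alpha^{1-q},(s_0/2,\alpha a)}\Bigr),
\]
with $\tilde M=O(s_0^{-2})$ independent of $\alpha$. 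For the second summand, Lemma~\ref{lem8} together with $\lambda_j(T^{1,D})\ge\lambda_j(T^{1,N})$ gives $-c\alpha^{q-1}$ as a lower bound; for the first summand, Corollary~\ref{corol14} applied with fixed $s_0$ and $\eps=\alpha^{1-q}\to 0^+$ provides $\lambda_j(T^{1,D}_{\alpha^{1-q},(0,s_0)})=A_\omega^{2/(2-q)}\lambda_j(L_1)\alpha^{2(q-1)/(2-q)}+O(\alpha^{q(q-1)/(2-q)})$. Since $2(q-1)/(2-q)>q-1$ for $q\in(1,2)$, the tip piece's $j$ smallest eigenvalues dominate and the direct sum's $\lambda_j$ equals $\lambda_j(T^{1,D}_{\alpha^{1-q},(0,s_0)})$.

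Multiplying by $\alpha^2$ and combining with the first stage yields the claim: the three error contributions $M$, $\alpha^2\tilde M=O(\alpha^2)$, and the away-from-tip scaled piece $\alpha^2\cdot(-c\alpha^{q-1})=O(\alpha^{q+1})$ are all absorbed into the remainder $O(\alpha^{2/(2-q)-(q-1)})$, since both $2$ and $q+1$ are strictly less than $2/(2-q)-(q-1)=(q^2-3q+4)/(2-q)$, each such inequality reducing to $(q-1)^2>0$ for $q\in(1,2)$. The main delicate point will be the repeated spectral-dominance bookkeeping in the two direct-sum reductions, which uses in an essential way the strict inequality $q>1$ through the comparisons $2/(2-q)>2$ and $2(q-1)/(2-q)>q-1$; the density statement placing $\chi_1 u$ in $\widehat W_0^{1,2}$ and the passage from the IMS identity to the direct-sum max-min inequality via the isometric embedding $u\mapsto(\chi_1 u,\chi_2 u)$ are essentially routine.
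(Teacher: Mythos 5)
Your argument is correct and follows essentially the same route as the paper: separate the Lipschitz remainder from the peak, use the bound $\lambda_1\ge -c\alpha^2$ there, rescale, perform an IMS cut at a fixed scale inside the peak, control the outer piece by Lemma~\ref{lem8} and the tip piece by Corollary~\ref{corol14}, and check the same exponent comparisons $2<2/(2-q)$ and $q-1<2(q-1)/(2-q)$. The only (harmless) deviation is that you use an IMS partition already in the first stage, imposing a Dirichlet condition on the interface at the cost of an extra $O(1)$ term, where the paper simply uses Neumann bracketing by restriction.
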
	

\begin{proof}
Let $j\in\N$ be fixed. For any $u\in W^{1,2}(\Omega)$ and any $\alpha>0$ we have
\[
r^\Omega_\alpha(u,u)\ge t^{\alpha,N}_{1,(0,a)}(u|_{\Omega_a},u|_{\Omega_a})+r^{\Omega'_a}_\alpha (u|_{\Omega'_a},u|_{\Omega'_a}),
\]
and the min-max principle shows that for any $\alpha>0$ it holds
\[
	\lambda_j(R^\Omega_\alpha)\ge \lambda_j\big(T^{\alpha,N}_{1,(0,a)}\oplus R^{\Omega'_a}_\alpha\big).
\]
Remark that
\[
\lambda_j\Big(T^{\alpha,N}_{1,(0,a)}\oplus R^{\Omega'_a}_\alpha\Big)\ge \min\Big\{\lambda_j\big(T^{\alpha,N}_{1,(0,a)}\big),\lambda_1(R^{\Omega'_a}_\alpha)\Big\}.
\]
As $\Omega'_a$ is a bounded Lipschitz domain, there is $c>0$ such that $\lambda_1(R^{\Omega'_a}_\alpha)\ge-c\alpha^2$ for all sufficiently large $\alpha$ (as discussed in the introduction). Due to the scaling \eqref{lscal} we have
\[
\lambda_j\big(T^{\alpha,N}_{1,(0,a)}\big)=\alpha^2 \lambda_j\big(T^{1,N}_{\alpha^{1-q},(0,a\alpha)}\big),
\]
and by putting all together we arrive at
\begin{equation}
	\label{lb3}
	\lambda_j(R^\Omega_\alpha)\ge \min\Big\{\lambda_j\big(T^{1,N}_{\alpha^{1-q},(0,a\alpha)}\big),-c\Big\}\alpha^2
	\text{ for all sufficiently large $\alpha$.}
\end{equation}

From now on we assume that $\alpha>1$. To study the eigenvalues of $T^{1,N}_{\alpha^{1-q},(0,a\alpha)}$
we pick smooth functions $\chi_1,\chi_2\in C^\infty (0,\infty)$ such that
	\[
	\chi_1(s)=0 \text{ for all }s>\frac{2a}{3},\qquad \chi_2(s)=0 \text{ for all }s<\frac{a}{3},
	\qquad
	\chi_1^2+\chi_2^2=1.
	\]	
Due to Lemma \ref{lem5} the set $W^{1,2}_{(0,\infty)}(V_{\alpha^{1-q},(0,a\alpha)})$ is a core domain of $t^{1,N}_{\alpha^{1-q},(0,a\alpha)}$, and for any $u\in W^{1,2}_{(0,\infty)}(V_{\alpha^{1-q},(0,a\alpha)})$ one has
\begin{gather*}
	(x_1,x')\mapsto \chi_1(x_1) u(x_1,x')\in W^{1,2}_{(0,a)}(V_{\alpha^{1-q},(0,a)}),\\
	(x_1,x')\mapsto \chi_2(x_1) u(x_1,x')\in W^{1,2}(V_{\alpha^{1-q},(\frac{a}{3},a\alpha)}),\\
	\|\chi_1 u\|^2_{L^2(V_{\alpha^{1-q},(0,a)})}+\|\chi_2 u\|^2_{L^2(V_{\alpha^{1-q},(\frac{a}{3},a\alpha)})}=\|u\|^2_{L^2(V_{\alpha^{1-q},(0,a\alpha)})},\\
\begin{aligned}
	t^{1,N}_{\alpha^{1-q},(0,a\alpha)}(u,u)&=	t^{1,D}_{\alpha^{1-q},(0,a)}(\chi_1 u,\chi_1 u)+
t^{1,N}_{\alpha^{1-q},(\frac{a}{3},a\alpha)}(\chi_2 u,\chi_2 u)\\ &\quad-\int_{V_{\alpha^{1-q},(0,a\alpha)}} \big(|\nabla\chi_1|^2+|\nabla\chi_2|^2\big)u^2\dd \H^d,
\end{aligned}
\end{gather*}
and by denoting $B:=\big\||\nabla\chi_1|^2+|\nabla\chi_2|^2\big\|_\infty$
and using the min-max-principle we arrive at
\begin{align*}
	\lambda_j\big(T^{1,N}_{\alpha^{1-q},(0,a\alpha)}\big)&\ge \lambda_j\Big(
	T^{1,D}_{\alpha^{1-q},(0,a)}\oplus T^{1,N}_{\alpha^{1-q},(\frac{a}{3},a\alpha)}\Big)-B\\
	&\ge \min\Big\{\lambda_j\Big(
	T^{1,D}_{\alpha^{1-q},(0,a)}\Big),\lambda_1\Big(
	T^{1,N}_{\alpha^{1-q},(\frac{a}{3},a\alpha)}\Big)
	\Big\}-B.
\end{align*}
For some fixed $C>0$ and all sufficiently large $\alpha$ we have, due to Lemma \ref{lem8} and Corollary \ref{corol14},
\[
\lambda_j\Big(
T^{1,D}_{\alpha^{1-q},(0,a)}\Big)=
A_\omega^\frac{2}{2-q}\alpha^\frac{2q-2}{2-q}\lambda_j(L_1)+O\Big(\alpha^{\frac{q^2-q}{2-q}}\Big)
< -C\alpha^{q-1}\le \lambda_1\Big(
T^{1,N}_{\alpha^{1-q},(\frac{a}{3},a\alpha)}\Big),
\]	
because
\[
\dfrac{2q-2}{2-q}=\dfrac{2}{2-q} (q-1)>q-1.
\]
This yields for all large $\alpha$ 
\[
\lambda_j\big(T^{1,N}_{\alpha^{1-q},(0,a\alpha)}\big)\ge \lambda_j\Big(
T^{1,D}_{\alpha^{1-q},(0,a)}\Big)-B=A_\omega^\frac{2}{2-q}\alpha^\frac{2q-2}{2-q}\lambda_j(L_1)+O\Big(\alpha^{\frac{q^2-q}{2-q}}\Big),
\]
and using \eqref{lb3} we obtain
\begin{align*}
		\lambda_j(R^\Omega_\alpha)&\ge \min\Big\{\lambda_j\big(T^{1,N}_{\alpha^{1-q},(0,a\alpha)}\big),-c\Big\}\alpha^2\ge  \min\Big\{A_\omega^\frac{2}{2-q}\alpha^\frac{2q-2}{2-q}\lambda_j(L_1)+O\Big(\alpha^{\frac{q^2-q}{2-q}}\Big),-c\Big\}\alpha^2\\
		&=\bigg[ A_\omega^\frac{2}{2-q}\alpha^\frac{2q-2}{2-q}\lambda_j(L_1)+O\Big(\alpha^{\frac{q^2-q}{2-q}}\Big) \bigg]\alpha^2=A_\omega^\frac{2}{2-q} \lambda_j(L_1) \alpha^\frac{2}{2-q}+O\Big(\alpha^{\frac{2}{2-q}-(q-1)}\Big). \qedhere
\end{align*}
\end{proof}

The claim of Theorem~\ref{thmain} follows by combining the upper bound
of Lemma~\ref{lem15} with the lower bound of Lemma~\ref{lem16}.

\section{Proof of Theorem \ref{thmeig} (localization of eigenfunctions)}\label{eigenfunctions}

Let $j\in\N$ and $b>0$ be fixed and $u$ be an eigenfunction corresponding to the eigenvalue
\[
E:=\lambda_j(R_\alpha^\Omega).
\]
For any function $\Phi\in C^\infty(\Omega)$ which is bounded and has bounded partial derivatives a direct computation gives
\begin{align*}
\int_\Omega|\nabla(e^{\Phi}u)|^2\ \dd \H^d&-\alpha\int_{\partial\Omega}|e^{\Phi}u|^2\ \dd \H^{d-1}
\\
&=\int_\Omega \big( |\nabla \Phi|^2 |e^{\Phi} u|^2+2e^{2\Phi}u\langle\nabla\Phi,\nabla u\rangle_{\R^d}+e^{2\Phi}|\nabla u|^2\big) \dd \H^d-\alpha\int_{\partial\Omega}|e^{\Phi}u|^2\ \dd \H^{d-1}
\\
&=\int_\Omega \big(|\nabla\Phi|^2+E\big)|e^\Phi u|^2\ \dd \H^d,
\end{align*}
where for the last equality we used that for $v:=e^{\Phi}u\in W^{1,2}(\Omega)$ it holds that
\[
r_\alpha^\Omega(u,v)=\int_\Omega \langle \nabla u,\nabla v\rangle_{\R^d}\, \dd \H^d-\alpha\int_{\partial\Omega}uv\, \dd \H^{d-1}=E\int_{\Omega}uv\, \dd \H^d
\]
due to the definition of $R^\Omega_\alpha$. We now set
\[
\Phi(x):=b\, \alpha|x|,
\]
then
$|\nabla\Phi|=b\, \alpha$. In virtue of Theorem \ref{thmain} one finds
$\alpha_0>1$ and $c_1>0$ such that $E<-c_1\alpha^{\frac{2}{2-q}}$ for all $\alpha>\alpha_0$. Plugging this into the above equation gives
\begin{equation}
	\label{agm1}
\begin{aligned}
\int_\Omega\big|\nabla(e^{\Phi}u)\big|^2\, \dd \H^d-\alpha\int_{\partial\Omega}|e^{\Phi}u|^2\, \dd \H^{d-1}&=\int_\Omega \big(|\nabla\Phi|^2+E\big)|e^\Phi u|^2\, \dd \H^d\\
&\le \int_\Omega (b^2\alpha^2-c_1\alpha^{\frac{2}{2-q}})|e^{\Phi}u|^2\, \dd \H^d
\end{aligned}
\end{equation}
for all $\alpha>\alpha_0$. Now we choose an IMS-type partition of unity $\{\chi_1,\chi_2\}$, i.e. two functions $\chi_1,\chi_2\in C^\infty(\R^d)$ such that
\begin{itemize}
	\item $\chi^2_1+\chi_2^2=1$,
	\item $\chi_1(x_1,x')=1$ for $\max\big\{|x_1|,|x'|\big\}\le 1$,
	\item $\chi_2(x_1,x')=1$ for $\max\big\{|x_1|,|x'|\big\}\ge 2$,
\end{itemize}
and denote
\[
\chi_{k,\alpha}(x):=\chi_k(\alpha x) \text{ for }k\in\{1,2\}.
\]
Then a direct computation gives
\begin{multline*}
	\int_\Omega\big|\nabla(e^{\Phi}u)\big|^2\, \dd \H^d-\alpha\int_{\partial\Omega}|e^{\Phi}u|^2\, \dd \H^{d-1}\\
	=\sum_{k=1}^2\Big(\int_{\Omega}|\nabla(\chi_{k,\alpha}e^{\Phi}u)|^2\, \dd \H^d-\alpha\int_{\partial\Omega}|\chi_{k,\alpha}e^{\Phi}u|^2\ \dd \H^{d-1}\Big)
	\\-\int_\Omega \big(|\nabla \chi_{1,\alpha}|^2+|\nabla \chi_{2,\alpha}|^2\big)|e^\Phi u|^2\,\dd \H^d,
\end{multline*}
which together with \eqref{agm1} yields
\begin{multline}
	\label{agm2}
\sum_{k=1}^2\Big(\int_{\Omega}|\nabla(\chi_{k,\alpha}e^{\Phi}u)|^2\, \dd \H^d
-\alpha\int_{\partial\Omega}|\chi_{k,\alpha}e^{\Phi}u|^2\, \dd \H^{d-1}\Big)\\
\le \int_{\Omega}(b_0\alpha^2+b^2\alpha^2-c_1\alpha^{\frac{2}{2-q}})|e^{\Phi}u|^2\, \dd \H^d,
\end{multline}
where $b_0:=\big\||\nabla\chi_1|^2+|\nabla\chi_2|^2\big\|_\infty$.

Let $a$ and $\Omega_a'$ be chosen as in Section \ref{secproof}. We can increase $\alpha_0$ and find $c_2,c_3,c_4>0$ such that for all $\alpha>\alpha_0$ one obtains:
\begin{itemize}
\item $\lambda_1(T_{1,(\frac{1}{\alpha},a)}^{\alpha,N})\ge -c_2\,\alpha^{q+1}$, see Lemma \ref{lem8} and \eqref{lscal},
\item $\lambda_1(R^{\Omega_a'}_\alpha)\ge -c_3\,\alpha^2$, see \eqref{lipschitz},
\item $\lambda_1(T_{1,(0,\frac{2}{\alpha})}^{\alpha,D})\ge -c_4\,\alpha^{\frac{2}{2-q}}$, see Lemma \ref{lem12} combined with \eqref{lscal}.
\end{itemize}
We now combine these inequalities with \eqref{agm2}:
\begin{align*}
\int_{\Omega}(b_0\alpha^2+&b^2\alpha^2-c_1\alpha^{\frac{2}{2-q}})|e^{\Phi}u|^2\, \dd \H^d\ge \sum_{k=1}^2\bigg(\int_{\Omega}|\nabla(\chi_{k,\alpha}e^{\Phi}u)|^2\, \dd \H^d-\alpha\int_{\partial\Omega}|\chi_{k,\alpha}e^{\Phi}u|^2\,\dd \H^{d-1}\bigg)
\\
\ge&\int_{V_{1,(0,\frac{2}{\alpha})}}|\nabla(\chi_{1,\alpha}e^{\Phi}u)|^2\, \dd \H^d-\alpha\int_{\partial_0 V_{1,(0,\frac{2}{\alpha})}}|\chi_{1,\alpha}e^{\Phi}u|^2\,\dd \H^{d-1}
\\
&+\int_{V_{1,(\frac{1}{\alpha},a)}}|\nabla(\chi_{2,\alpha}e^{\Phi}u)|^2\, \dd \H^d-\alpha\int_{\partial V_{1,(\frac{1}{\alpha},a)}}|\chi_{2,\alpha}e^{\Phi}u|^2\, \dd \H^{d-1}
\\
&+\int_{\Omega_a'}|\nabla(\chi_{2,\alpha}e^{\Phi}u)|^2\, \dd \H^d-\alpha\int_{\partial \Omega_a'}|\chi_{2,\alpha}e^{\Phi}u|^2\, \dd \H^{d-1}
\\
=&\, t^{\alpha,D}_{1,(0,\frac{2}{\alpha})}(\chi_{1,\alpha}e^{\Phi}u|_{V_{1,(0,\frac{2}{\alpha})}},\chi_{1,\alpha}e^{\Phi}u|_{V_{1,(0,\frac{2}{\alpha})}})+t^{\alpha,N}_{1,(\frac{1}{\alpha},a)}(\chi_{2,\alpha}e^{\Phi}u|_{V_{1,(\frac{1}{\alpha}),a}},\chi_{2,\alpha}e^{\Phi}u|_{V_{1,(\frac{1}{\alpha},a)}})
\\
&+r^{\Omega_a'}_\alpha(\chi_{2,\alpha}e^{\Phi}u|_{\Omega_a'},\chi_{2,\alpha}e^{\Phi}u|_{\Omega_a'})
\\
\ge& \lambda_1(T_{1,(0,\frac{2}{\alpha})}^{\alpha,D})\ \|\chi_{1,\alpha}e^{\Phi}u\|^2_{L^2(V_{1,(0,\frac{2}{\alpha})})}+\lambda_1(T_{1,(\frac{1}{\alpha},a)}^{\alpha,N})\ \|\chi_{2,\alpha}e^{\Phi}u\|^2_{L^2(V_{1,(\frac{1}{\alpha},a)})}
\\
&+\lambda_1(R^{\Omega_a'}_\alpha) \ \|\chi_{2,\alpha}e^{\Phi}u\|^2_{L^2(\Omega_a')}
\\
\ge& -c_4\, \alpha^{\frac{2}{2-q}} \, \|\chi_{1,\alpha}e^{\Phi}u\|^2_{L^2(\Omega)}-(c_2\, \alpha^{q+1}+c_3 \alpha^2) \, \|\chi_{2,\alpha}e^{\Phi}u\|^2_{L^2(\Omega)}.
\end{align*} 
By representing on the left-hand side $|e^{\Phi}u|^2=|\chi_{1,\alpha}e^{\Phi}u|^2+|\chi_{2,\alpha}e^{\Phi}u|^2$
we can rearrange the terms in the preceding inequality to obtain
\[
\big((b^2+b_0)\alpha^2+c_4\alpha^{\frac{2}{2-q}}\big)\int_{\Omega}|\chi_{1,\alpha}e^{\Phi}u|^2\, \dd \H^d
\ge \big(c_1\alpha^{\frac{2}{2-q}}-c_2 \alpha^{q+1}-(b^2+b_0+c_3)\alpha^2\big)\int_{\Omega}|\chi_{2,\alpha}e^{\Phi}u|^2\,\dd \H^d.
\]
Pick any $c_5\in (0,c_1)$ and $c_6>c_4$, then one can increase the value of $\alpha_0$
to obtain for all $\alpha>\alpha_0$
\[
c_6 \alpha^{\frac{2}{2-q}}\ge (b^2+b_0)\alpha^2+c_4\alpha^{\frac{2}{2-q}},
\quad
c_1\alpha^{\frac{2}{2-q}}-c_2 \alpha^{q+1}-(b^2+b_0+c_3)\alpha^2\ge c_5 \alpha^{\frac{2}{2-q}},
\]
hence,
\[
c_6 \alpha^{\frac{2}{2-q}}\int_{\Omega}|\chi_{1,\alpha}e^{\Phi}u|^2\, \dd \H^d
\ge c_5\  \alpha^{\frac{2}{2-q}}\int_{\Omega}|\chi_{2,\alpha}e^{\Phi}u|^2\, \dd \H^d,
\]
i.e. $\|\chi_{2,\alpha}e^{\Phi}u\|^2_{L^2(\Omega)}\le c_7\|\chi_{1,\alpha}e^{\Phi}u\|^2_{L^2(\Omega)}$  with $c_7:=c_6/c_5$,
therefore,
\begin{equation}
	\label{agm3}
\int_{\Omega}e^{2b\alpha|x|}\big|u(x)\big|^2\dd x=\|\chi_{1,\alpha} e^{\Phi}u\|^2_{L^2(\Omega)}+\|\chi_{2,\alpha} e^{\Phi}u\|^2_{L^2(\Omega)}
\le (1+c_7)\|\chi_{1,\alpha} e^{\Phi}u\|^2_{L^2(\Omega)}.
\end{equation}

Now remark that for any $x=(x_1,x')\in\mathrm{supp}\,\chi_{1,\alpha}$ we have $\max\big\{|x_1|,|x'|\big\}\le\frac{2}{\alpha}$
and $|x|\le \frac{2\sqrt{2}}{\alpha}$, which implies
\[
\|\chi_{1,\alpha} e^{\Phi}u\|^2_{L^2(\Omega)}
\le \int_{\Omega}e^{2b\sqrt{2}}|u|^2\, \dd \H^d,
\]
and the substitution into \eqref{agm3} gives the required estimate \eqref{agm0} with $B:=(1+c_7) e^{2b\sqrt{2}}$.

\appendix


\section{Spectral properties of the effective one-dimensional Schr\"odinger operator $L_\mu$}\label{appa}

For the sake of completeness we add proofs for the claims made in Section \ref{sec1d} about the operator $L_\mu$.
Recall that $L_\mu$ (with $\mu>0$) is the self-adjoint operator in $L^2(0,\infty)$ obtained as the Friedrichs extension
of the symmetric operator
\[
	C_c^\infty(0,\infty) \ni f \mapsto -f'' + V_\mu f, \qquad
	V_\mu(s):=\frac{q^2(d-1)^2-2q(d-1)}{4s^2}-\frac{\mu}{s^q}.
\]

\begin{lemma}\label{ess1} $[0,\infty)\subset\specess L_\mu$.
\end{lemma}
\begin{proof}
Choose cut-off functions $\chi_n$, with $n\in\N$, such that
\begin{itemize}
	\item $\chi_n\in C^\infty(\R)$ with $0\le \chi_n\le 1$,
	\item $\mathrm{supp}\,\chi_n\subset\big[n\pi,(2n+1)\pi\big]$,
	\item $\chi_n(s)=1$ for all $s\in\big[(n+1)\pi,2n\pi\big]$,
	\item $\|\chi_n'\|_\infty+\|\chi_n''\|_\infty\le c$ with some $c$ independent of $n$.
\end{itemize}
Let $k\in\R$ and define functions $f_n\in C^\infty_c(0,\infty)$ by
\[
f_n(s):=\chi_n(s)\cos(ks),
\]
then $\mathrm{supp}\, f_n\subset \big[n\pi,(2n+1)\pi\big]$ and 
\[
(L_\mu-k^2)f_n (s)= 2k\chi'_n(s)\sin(ks)-\chi''_n(s)\cos(ks)+V_\mu(s) f_n(s).
\]
Remark that the first two summands on the right-hand side are uniformly bounded in $(s,n)$ and supported in $\big[n\pi,(n+1)\pi\big]\cup\big[2n\pi,(2n+1)\pi\big]$. Furthermore, we can roughly estimate $\big|V_\mu(s)\big|\le \frac{c_0}{s}$ for all $s\ge \pi$ with some $c_0>0$ independent of $s$. These observations yield
\[
\|(L_\mu-k^2)f_n\|_{L^2(0,\infty)}\le c_1+\dfrac{c_0}{n}\|f_n\|_{L^2(0,\infty)}
\]
with some fixed $c_1>0$. On the other hand,
\[
\|f_n\|^2_{L^2(0,\infty)}\ge \int_{(n+1)\pi}^{2n\pi}\big|\cos(ks)\big|^2\ \dd s=(n-1)\int_0^\pi\big|\cos(ks)\big|^2\ \dd s
=(n-1)\dfrac{\pi}{2},
\]
hence,
\[
\dfrac{\|(L_\mu-k^2)f_n\|_{L^2(0,\infty)}}{\|f_n\|_{L^2(0,\infty)}}\xrightarrow{n\to\infty}0,
\]
which shows $k^2\in\spec\, L_\mu$. As $k$ was an arbitrary real number, one obtains $[0,\infty)\subset \spec L_\mu$.
As each non-isolated point of the spectrum belongs to the essential spectrum, one arrives at the conclusion.
\end{proof}

\begin{lemma} $\inf\specess L_\mu\ge 0$.
\end{lemma}

\begin{proof}
Let $\chi_1,\chi_2\in C^\infty(0,\infty)$ with $\chi_1^2+\chi_2^2=1$, $\chi_1(s)=1$ for $s<1$ and $\chi_1(s)=0$ for $s>2$.
For $t>0$ denote
\[
\chi_{k,t}:=\chi_k\Big(\dfrac{\cdot}{t}\Big),\quad k\in\{1,2\},
\]
then for any $f\in C^\infty_c(0,\infty)$ one has, with $a:=\big\|(\chi_1')^2+(\chi_2')^2\big\|_\infty$,
\begin{align*}
	\langle f,L_\mu f\rangle_{L^2(0,\infty)}&=\int_0^\infty \big( |f'(s)|^2+V_\mu |f(s)|^2\big)\,\dd s\\
	&=\int_0^\infty \big( \big|(\chi_{1,t}f)'(s)\big|^2+V_\mu \big|(\chi_{1,t}f)(s)\big|^2\big)\,\dd s\\
	&\quad +\int_0^\infty \Big( |(\chi_{2,t}f)'(s)|^2+V_\mu |(\chi_{2,t}f)(s)|^2\big)\,\dd s\\
	&\quad -\int_0^\infty \Big(\big|\chi_{1,t}'(s)\big|^2+\big|\chi_{2,t}'(s)\big|^2\Big)\big|f(s)\big|^2\dd s	\\
	&\ge \int_0^{2t} \big( \big|(\chi_{1,t}f)'(s)\big|^2+V_\mu \big|(\chi_{1,t}f)(s)\big|^2\big)\,\dd s\\
	&\quad +\int_t^\infty \Big( |(\chi_{2,t}f)'(s)|^2+V_\mu |(\chi_{2,t}f)(s)|^2\big)\,\dd s
	-\dfrac{a}{t^2}\|f\|^2_{L^2(0,\infty)}\\
	&=\Big\langle \chi_{1,t}f, L_{\mu,2t}(\chi_{1,t}f)\Big\rangle_{L^2(0,2t)}+
	\Big\langle \chi_{2,t}f, \Tilde L_{\mu,t}(\chi_{2,t}f)\Big\rangle_{L^2(t,\infty)}
\end{align*}
where $L_{\mu,2t}$ is defined in Section~\ref{sec1d} and $\Tilde L_{\mu,t}$ is the self-adjoint operator in $L^2(t,\infty)$
defined as the Friedrichs extension of
\[
f\mapsto -f''+V_\mu f,\quad f\in C^\infty_c(t,\infty).
\]
Then the min-max principle implies that for any $t>0$ one has the inequality
\[
\inf \specess L_\mu\ge \inf\specess (L_{\mu,2t}\oplus\Tilde L_{\mu,t})-\frac{a}{t^2}.
\]
Using the fact that $L_{\mu,2t}$ has compact resolvent, so $\specess L_{\mu,2t}=\emptyset$, this reduces to
\begin{align*}
\inf \specess L_\mu&\ge \inf\specess \Tilde L_{\mu,t}-\frac{a}{t^2}
\ge \inf\spec \,\Tilde L_{\mu,t}-\frac{a}{t^2}\ge \inf_{s> t} V_\mu(s)-\frac{a}{t^2} \text{ for all } t>0.
\end{align*}
For $t\to\infty$ the right-hand side converges to $0$, which gives the sought lower bound.
\end{proof}

By combining the two preceding assertions we obtain:
\begin{corollary} $\specess L_\mu=[0,\infty)$.
\end{corollary}

Having determined the essential spectrum, we now turn to the discrete spectrum of $L_\mu$. 
\begin{lemma}
The operator $L_\mu$ has infinitely many negative eigenvalues.
\end{lemma}
\begin{proof}
	In view of the specific form of $V_\mu$ one can choose $R>0$ and $c>0$ such that for all $s> R$ it holds $V_\mu(s)\le -c/s^q$. Now pick any $\varphi\in C^\infty_c(0,\infty)$ with $\mathrm{supp}\,\varphi\subset (R,2R)$ and $\|\varphi\|_{L^2(0,\infty)}=1$, and for any $t>1$ consider the functions
	\[
	\varphi_t:\ s\mapsto\dfrac{1}{\sqrt{t}}\varphi\left(\frac{s}{t}\right),
	\]
	then $\varphi_t\in C^\infty_c(0,\infty)$ with $\|\varphi_t\|_{L^2(0,\infty)}=1$ and $\mathrm{supp}\,\varphi_t\subset  (tR,2tR)$. One estimates
\begin{align*}
\int_0^\infty \big| \varphi_t'(s)\big|^2\, \dd s &= \int_0^\infty \dfrac{1}{t^{3}} \Big| \varphi' \Big(\dfrac{s}{t}\Big) \Big|^2 \, \dd s=\dfrac{1}{t^2}\underbrace{\int_0^\infty \big|\varphi'(s)\big|^2\, \dd s}_{=:a>0} \equiv \dfrac{a}{t^2},\\
\int_0^\infty V_\mu(s)\big|\varphi_t(s)\big|^2 \dd s&=\dfrac{1}{t}\int_{t R}^{2 t R} V_\mu(s)\Big|\varphi\Big(\dfrac{s}{t}\Big)\Big|^2\, \dd s\le \dfrac{1}{t} \int\limits_{t R}^{2 t R} \bigg(-\dfrac{c}{|s|^q}\bigg)\Big|\varphi\Big(\dfrac{s}{t}\Big)\Big|^2\, \dd s\\
&=-\dfrac{c}{t^q} \underbrace{\int_R^{2 R} \dfrac{\big|\varphi(y)\big|^2}{|y|^q}\, \dd y}_{=:b>0}=-\dfrac{bc}{t^q}.
\end{align*}
As $q<2$, one can choose $n_0\in\N$ sufficiently large to have
\begin{align*}
 \langle\varphi_t, L_\mu\varphi_t\rangle_{L^2(0,\infty)}&=\int_0^\infty  \Big(\big|\varphi_t'(s)\big|^2+V_\mu(s)\big|\varphi_t(s)\big|^2 \Big)\, \dd s\\
&\le\dfrac{a}{t^2} -\dfrac{bc}{t^q}=\dfrac{a-bc t^{2-q}}{t^2}<0 \text{ for all } t\ge n_0.
\end{align*}
Now for $n\ge n_0$ put $\psi_n:=\varphi_{2^n }$, then the functions $\psi_n$ have mutually
disjoint supports and, therefore, form an orthonormal family, and
\[
\langle \psi_m, L_\mu\psi_n\rangle_{L^2(0,\infty)} =0 \text{ for $m\ne n$}, \quad a_n:=\langle \psi_n, L_\mu\psi_n\rangle <0.
\]

Let $N\in \N$ and consider the $N$-dimensional subspace $F:=\mathrm{span}\{\psi_{n_0+1},\dots,\psi_{n_0+N}\}$. 
Let $\psi\in F$, then
\begin{align*}
\psi&=\sum_{k=1}^{N} \xi_k \psi_{n_0+k}, \quad \xi=(\xi_1,\dots,\xi_N)\in \R^N,\qquad
\|\psi\|^2_{L^2(0,\infty)}=\sum_{k=1}^{N} |\xi_n|^2,\\
 \langle\psi,L_\mu\psi\rangle_{L^2(0,\infty)}&=\sum_{k,k'=1}^{N} \langle\psi_{n_0+k}, L_\mu\psi_{n_0+k'}\rangle_{L^2(0,\infty)}\xi_k\xi_{k'}=\sum_{k=1}^{N} a_{n_0+k}|\xi_k|^2 
\\ 
&\le \underbrace{\max\{a_{n_0+1},\dots,a_{n_0+N}\}}_{=:b_N<0}
\sum_{k=1}^N |\xi^2_k|= b_N \|\psi\|^2_{L^2(0,\infty)}.
\end{align*}
It follows that
\[
\sup_{\psi\in F,\,\psi\ne 0}\dfrac{\langle \psi, L_\mu\psi\rangle_{L^2(0,\infty)}}{ \|\psi\|^2_{L^2(0,\infty)}}\le b_N<0=\inf\specess L_\mu,
\]
and the min-max principle implies that $L_\mu$ has at least $N$ eigenvalues in $(-\infty,0)$.
As $N\in\N$ was arbitrary, the result follows.
\end{proof}

\begin{lemma}
	All eigenvalues of $L_\mu$ are simple.
\end{lemma}

\begin{proof}
	This is a rather standard  property of second order ordinary differential operators \cite{weid}, but we give 
	an explicit argument adapted to this particular case.	Let $f$ be an eigenfunction of $L_\mu$ for an eigenvalue $E$, then $f$ is a solution of the differential equation
	\begin{equation}
		-f''+V_\mu f=E f \label{vme}
	\end{equation}
	which is square-integrable on $(0,\infty)$, in particular, on $(1,\infty)$. Remark that $V_\mu$ is bounded at infinity, which implies that we have the so-called limit point case at infinity, see e.g. \cite[Theorem 6.6]{weid}. It follows, in particular, that for any $E\in\R$ the space of solutions to \eqref{vme} 
	that are square-integrable on $(1,\infty)$ is at most one-dimensional, see e.g.~\cite[Theorem 5.6]{weid}.
\end{proof}

\section*{Acknowledgments}
The authors are indebted to the referee for the careful reading and constructive remarks that motivated them to deepen the initial study and include Section \ref{eigenfunctions} and Appendix \ref{appa}.

F.~Sk is supported by the Alexander von Humboldt Foundation, Germany. M. Vogel is partially supported by the Deutsche
Forschungsgemeinschaft (DFG, German Research Foundation) – 471212562.

\end{document}